\begin{document} 
\newcommand{\A}{{\mathbb A}}
\newcommand{\B}{{\mathbb B}}
\newcommand{\C}{{\mathbb C}}
\newcommand{\N}{{\mathbb N}}
\newcommand{\Q}{{\mathbb Q}}
\newcommand{\Z}{{\mathbb Z}}
\renewcommand{\P}{{\mathbb P}}
\renewcommand{\O}{{\mathcal O}}
\newcommand{\R}{{\mathbb R}}
\newcommand{\rc}{\subset}
\newcommand{\rank}{\mathop{rank}}
\newcommand{\trace}{\mathop{tr}}
\newcommand{\dimc}{\mathop{dim}_{\C}}
\newcommand{\Lie}{\mathop{Lie}}
\newcommand{\Spec}{\mathop{Spec}}
\newcommand{\Auto}{\mathop{{\rm Aut}_{\mathcal O}}}
\newcommand{\alg}[1]{{\mathbf #1}}
\newtheorem{lemma}{Lemma}[section]
\newtheorem{definition}[lemma]{Definition}
\newtheorem{corollary}[lemma]{Corollary}
\newtheorem{example}{Example}
\newtheorem*{remark}{Remark}
\newtheorem*{observation}{Observation}
\newtheorem*{remarks}{Remarks}
\newtheorem{proposition}[lemma]{Proposition}
\newtheorem{theorem}[lemma]{Theorem}
\numberwithin{equation}{section}
\def\labelenumi{\rm(\roman{enumi})}
\title{%
Tame discrete subsets in Stein manifolds
}
\author {J\"org Winkelmann}
\begin{abstract}
we generalize the notion of ``tameness'' for discrete sets
to arbitrary Stein manifolds.
\end{abstract}
\subjclass{32M17}%
\keywords{tame discrete set, complex semisimple Lie group}
\address{%
J\"org Winkelmann \\
Lehrstuhl Analysis II \\
Mathematisches Institut \\
NA 4/73\\
Ruhr-Universit\"at Bochum\\
44780 Bochum \\
Germany
}
\email{jwinkel@member.ams.org
}
\maketitle

\section{Introduction}

For discrete subsets in $\C^n$ the notion of being ``tame'' was defined
in the important paper of Rosay and Rudin \cite{RR}. 
A discrete subet $D\subset\C^n$ is called ``tame''
if and only there exists an automorphism $\phi$ of $\C^n$ such that
$\phi(D)=\{0\}^{n-1}\times\Z$.

We want to introduce and study a similar notion for complex manifolds
other than $\C^n$.

Therefore we propose a new definition, show that it is equivalent to that
of Rosay and Rudin if the ambient manifold is $\C^n$ and deduce some
standard properties.

To obtain good results, we need some knowledge on the automorphism group
of the respective complex manifold. For this reason 
we get our best results in the case where 
the manifold is biholomorphic to a complex Lie group. We concentrate
on complex semisimple Lie groups, since every simply-connected complex
Lie group is biholomorphic to a direct product of $\C^n$ and a semisimple
complex Lie group.

\begin{definition}\label{def-tame}
Let $X$ be a complex manifold. 
An infinite discrete subset $D$ is called (weakly) {\em tame} if for every
exhaustion function $\rho:X\to\R^+$ and every map $\zeta:D\to\R^+$
there exists an automorphism $\phi$ of $X$ such that
$\rho(\phi(x))\ge \zeta(x)$ for all $x\in D$.
\end{definition}

Andrist and Ugolini 
(\cite{AU}) have proposed a different notion, namely the
following:

\begin{definition}
Let $X$ be a complex manifold. 
An infinite discrete subset $D$ is called (strongly) {\em tame} 
if for every injective map $f:D\to D$ there exists
an automorphism $\phi$ of $X$ such that $\phi(x)=f(x)$ 
for all $x\in D$.
\end{definition}

It is easily verified that ``strongly tame'' implies ``weakly tame''. For $X\simeq\C^n$
and $X\simeq SL_n(\C)$ both tameness notions coincide. Furthermore,
for $X=\C^n$ both notions agree with tameness as defined by Rosay and 
Rudin.

However, for arbitrary manifolds ``strongly tame'' and ``weakly tame''
are not equivalent.

In this article, unless explicitly stated otherwise, tame always
means weakly tame, i.e., tame in the sense of definition~\ref{def-tame}.

\section{Comparison between $\C^n$ and semisimple Lie groups}

For tame discrete sets in $\C^n$ in the sense of Rosay and Rudin,
the following facts are well-known:
\begin{enumerate}
\item
Any two tame sets are equivalent.
\item
Every discrete subgroup of $(\C^n,+)$ is tame as a discrete set.
\item
Every discrete subset of $\C^n$ is the union of two tame ones.
\item
There exist non-tame subsets in $\C^n$.
\item
Every injective self-map of a tame discrete subsetz of $\C^n$
extends to a biholomorphic self-map of $\C^n$.
\item
If $v_k$ is a sequence in $\C^n$ with
$\sum_{k=1}^\infty \frac{1}{||v_k||^{2n-1}}<\infty$,
then $\{v_k:k\in\N\}$ is a tame discrete subset.
\end{enumerate}

(See \cite{RR} for $(i),(iii),(iv),(v)$, \cite{JW} for $(vi)$.
For $n=2$, $(ii)$ is implied by proposition~4.1. of \cite{BL}. The
proof given there generalizes easily to arbitrary dimension $n$.)

For discrete subsets in semisimple complex Lie groups we are
able to prove the following properties:
\begin{enumerate}
\item
Any two tame discrete subsets in $SL_n(\C)$ are equivalent.
(proposition~\ref{sln-equiv})
\item
Certain discrete subgroups may be verified to be tame discrete
subsets. In particular,  $SL_2(\Z[i])$ is a tame discrete
subset (corollary~\ref{sl2zi}), and also every discrete subgroup of a one-dimensional
Lie subgroup of $SL_n(\C)$ (proposition~\ref{one-para}) and every
discrete subgroup fo a maximal torus (corollary~\ref{torus-tame}).
\item
Every discrete subset of $SL_n(\C)$ is the union of $n$
tame discrete subsets.
(corollary~\ref{union-tame})
\item
Every semisimple complex Lie group admits a non-tame discrete subset.
(proposition~\ref{ex-non-tame})
\item
Every injective self-map of a tame discrete subset of $SL_n(\C)$
extends to a biholomorphic self map of $SL_n(\C)$.
(proposition~\ref{sln-equiv}.)
\item
For every semisimple complex Lie group $S$ there exists a 
``threshold sequence'', i.e., there exists a sequence
of numbers $R_k>0$ and an exhaustion function $\tau$ such that
every sequence $g_k$ with $\tau(g_k)>R_k$ defines a tame discrete
subset.
(proposition~\ref{gen-proj}).
\end{enumerate}

\section{Results for other manifolds}
While tame discrete sets in semisimple complex Lie groups behave
in a way very similar to those in $\C^n$, for arbitrary complex
manifolds the situation is quite different:

\begin{enumerate}
\item
On $\C^n\setminus\{(0,\ldots,0)\}$ ($n\ge 2$)
there do exist discrete subsets
which may not be realized as a finite union of tame discrete subsets.
(corollary~\ref{no-finite-union}.)
\item
On $\Delta\times\C$ there are (weakly) tame discrete sets which are not
strongly tame. There are permutations of tame discrete sets which
do not extend to biholomorphic self-maps of the ambient manifold.
(proposition~\ref{no-strong},\ref{class-d-c})
\item
On $\Delta\times\C$ there exist inequivalent tame discrete subsets.
(corollary~\ref{inequiv-tame}.)
\item
On $\C^n\setminus\{(0,\ldots,0)\}$ there is no ``threshold sequence''.
(corollary~\ref{no-thres}.)
\end{enumerate}

\section{Preparations}

\begin{proposition}
Let $X$ be a complex manifold and let $D$ be an infinite discrete subset.

Then $D$ is tame if and only if there exists {\em one} exhaustion function
$\rho$ 
such that the following property holds:
{\em For every map $\zeta:D\to\R^+$
there exists an automorphism $\phi$ of $X$ such that
$\rho(\phi(x))\ge \zeta(x)$ for all $x\in D$.}
\end{proposition}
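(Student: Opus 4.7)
The ``only if'' direction is immediate from Definition~\ref{def-tame}, so the task is to show that the apparent weakening actually recovers the full definition. The plan is to use compactness of sublevel sets of an exhaustion function to convert a requirement phrased in terms of an arbitrary $\rho$ into one phrased in terms of the distinguished $\rho_0$, then apply the hypothesis.

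Let $\rho_0$ be the exhaustion function provided by the hypothesis. Given an arbitrary exhaustion function $\rho$ and an arbitrary map $\zeta:D\to\R^+$, I first define an auxiliary function $h:\R^+\to\R^+$ by
\[
 h(t) \;=\; 1 + \max\bigl\{\rho_0(y) : y\in X,\ \rho(y)\le t\bigr\}.
\]
This maximum exists because the sublevel set $\{\rho\le t\}$ is compact (as $\rho$ is an exhaustion function) and $\rho_0$ is continuous. By construction, $\rho_0(y)\ge h(t)$ forces $y\notin\{\rho\le t\}$, hence $\rho(y)>t$.

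Next I set $\tilde\zeta:D\to\R^+$ by $\tilde\zeta(x) := h(\zeta(x))$ and invoke the hypothesis with the distinguished exhaustion function $\rho_0$ and the map $\tilde\zeta$. This yields an automorphism $\phi$ of $X$ with $\rho_0(\phi(x))\ge \tilde\zeta(x) = h(\zeta(x))$ for every $x\in D$. By the defining property of $h$, this immediately implies $\rho(\phi(x))>\zeta(x)\ge\zeta(x)$ for all $x\in D$, which is exactly what is needed to verify tameness with respect to the original $\rho$ and $\zeta$.

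There is no substantial obstacle here; the whole content of the statement is the observation that an exhaustion function dominates (in the appropriate asymptotic sense) every other exhaustion function on compact sublevel sets, so a single $\rho_0$ is ``universal.'' The only point deserving a moment's care is ensuring that $h$ is well-defined, which is why the formulation requires $\rho$ to be an exhaustion function rather than merely a continuous positive function.
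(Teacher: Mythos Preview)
Your proof is correct and follows essentially the same approach as the paper: both construct, for each $x\in D$, a new target value exceeding $\sup\{\rho_0(y):\rho(y)\le\zeta(x)\}$ (the paper writes this pointwise as $\zeta_0(x)$ while you package it as $h(\zeta(x))$), then apply the hypothesis to $\rho_0$ and deduce the desired inequality for $\rho$. The only differences are cosmetic---your use of $h$, the choice of $\le$ versus $<$ on sublevel sets, and the explicit ``$+1$'' in place of a strict inequality.
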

(In the definition~\ref{def-tame} for being tame it is required that this
property holds for {\em every} exhaustion function.)
\begin{proof}
Assume that the property holds with respect to 
a given exhaustion function $\rho$. We have to show that
$D$ is tame, i.e., that the property holds with respect to every
exhaustion function.
Let $\tau$ be an arbitrary exhaustion function and let $\zeta:D\to\R$
be a map. We choose a map $\zeta_0:D\to\R$ in such a way
that
\[
\zeta_0(x) > \sup \{ \rho(p):\tau(p)< \zeta(x)\} \ \ \forall x\in D
\]
By assumption there is an automorphism $\phi$ of $X$ such that
$\rho(\phi(x))\ge \zeta_0(x)$ for all $x\in D$.
By the construction of $\zeta_0$ it follows that
\[
\phi(x) \not\in \{p\in X: \tau(p) <\zeta(x) \} \ \ \forall x\in D.
\]
Hence $\tau(\phi(x)) \ge \zeta(x)$ for all $x\in D$ as desired.
\end{proof}

\begin {definition}
Let $X$ be a complex manifold.
Two sequences $A(k)$, $B(k)$ in $X$ are called {\em equivalent}
if there exists a holomorphic automorphism $\phi$ of the complex
manifold $X$ such that $\phi(A(k))=B(k)$ for all $k\in\N$.

A sequence $A(k)$ in $X$ is called {\em tame} if the set $\{A(k):k\in\N$
is a tame discrete subset of $X$.
\end {definition}

\begin{proposition}
Let $X$ be a complex manifold. Assume that the automorphism group
$\Auto(X)$ is a finite-dimensional Lie group with countably
many connected components.

Then $X$ does not admit any tame discrete subset.
\end{proposition}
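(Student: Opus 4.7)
The plan is to exploit $\sigma$-compactness of $G:=\Auto(X)$ and run a diagonal argument. A finite-dimensional Lie group is locally compact, and each of its connected components, being a connected manifold, is $\sigma$-compact; together with the countability of the space of components this yields an exhaustion $G=\bigcup_{n\in\N}K_n$ by a nested sequence of compact subsets $K_n\subseteq K_{n+1}$.

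Fix any exhaustion function $\rho:X\to\R^+$ and enumerate the given infinite discrete subset as $D=\{x_k:k\in\N\}$. The standing hypothesis that $\Auto(X)$ is a Lie group is understood to include continuity of the evaluation map $G\times X\to X$ (which in any case is automatic for the compact-open topology, $X$ being locally compact). Consequently each orbit fragment $K_n\cdot x_k$ is compact, and
\[
M(n,k):=\sup_{\phi\in K_n}\rho(\phi(x_k))<\infty
\]
for every pair $(n,k)$. Now I would define $\zeta:D\to\R^+$ diagonally by $\zeta(x_k):=M(k,k)+1$ and verify that this $\zeta$ defeats every automorphism. Indeed, given an arbitrary $\phi\in G$, pick $n$ with $\phi\in K_n$: then
\[
\rho(\phi(x_n))\ \le\ M(n,n)\ <\ \zeta(x_n),
\]
so the tameness inequality $\rho(\phi(x))\ge\zeta(x)$ fails at $x=x_n$. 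Hence no $\phi$ witnesses tameness of $D$ against $\zeta$, and $D$ is not tame.

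The argument is purely soft-topological: no property of the complex structure is used beyond the existence of a continuous exhaustion $\rho$. The only point requiring a moment's care is the continuity of the action, flagged above; I do not foresee any other obstacle. In particular, neither the complex dimension of $X$ nor the exact orbit structure of $G$ on $X$ plays any role — which reflects the fact that the proposition is really a statement about how few sequences can be realised in the image of the evaluation from a $\sigma$-compact transformation group.
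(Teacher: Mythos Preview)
Your argument is correct and is essentially identical to the paper's own proof: both exhaust $G=\Auto(X)$ by an increasing sequence of compact sets $K_n$, set $\zeta(x_n)$ strictly above $\sup_{\phi\in K_n}\rho(\phi(x_n))$, and observe that any $\phi\in K_n$ then violates the tameness inequality at $x_n$. Your write-up adds the explicit justification of $\sigma$-compactness and the continuity of the evaluation map, but the core diagonal argument is the same.
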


\begin{proof}
Let  $D=\{p_n:n\in\N\}$ be an infinite discrete subset of $X$.
Fix an exhaustion function $\rho:X\to\R^+$.
Let $K_n$ be an increasing sequence of compact subsets of 
$G=\Auto(X)$, which exhausts $G$, i.e., $\cup_nK_n=G$.
Define $c_n=\max\{\rho(x):x\in K_n(p_n)\}$. Choose $\zeta_n>c_n$.
By construction, if $\rho(\phi(p_n))>\zeta_n$ for some $n\in\N$, 
then $\phi\in G\setminus K_n$.
Since $\cap_n(G\setminus K_n)=\{\}$, it follows that there is no
$\phi\in\Auto(X)$ with $\rho(\phi(p_n))>\zeta_n$ for all $n\in\N$.
Thus $D$ is not tame.
\end{proof}

\begin{corollary}\label{hyper}
Complex manifolds which are hyperbolic in the sense of Kobayashi
(e.g.~bounded domains in Stein manifolds) do not admit tame
subsets.
\end{corollary}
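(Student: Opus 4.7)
The plan is to reduce the statement to the preceding proposition, which says that a complex manifold whose automorphism group is a finite-dimensional Lie group with countably many connected components cannot contain a tame discrete subset. Everything hinges on verifying that $\Auto(X)$ is such a group whenever $X$ is Kobayashi hyperbolic.

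First I would invoke Kobayashi's classical theorem: if $X$ is a Kobayashi hyperbolic complex manifold, then $\Auto(X)$, equipped with the compact-open topology, is a real Lie group of finite dimension acting properly on $X$. Since $X$ is second countable (being a complex manifold), $\Auto(X)$ is itself second countable, and hence has only countably many connected components. The preceding proposition then delivers the conclusion immediately: there is no infinite discrete $D \subset X$ satisfying the tameness condition.

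For the parenthetical example, I would verify that a bounded domain $D$ in a Stein manifold $Y$ is in fact Kobayashi hyperbolic. Since $Y$ admits a proper holomorphic embedding into some $\C^N$, the subset $D$ corresponds under this embedding to a bounded subset of $\C^N$, which sits inside some polydisc. Bounded domains in $\C^N$ are known to be Kobayashi hyperbolic (the Kobayashi pseudodistance is bounded below by a multiple of the Euclidean distance there), and hyperbolicity is inherited by complex submanifolds, so $D$ itself is hyperbolic.

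The only real obstacle is locating and applying the correct form of Kobayashi's theorem on automorphism groups; the reduction itself is formal, and the hyperbolicity of bounded domains in Stein manifolds is classical. No new ingredients beyond the preceding proposition and Kobayashi's theorem are needed.
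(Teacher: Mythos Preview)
Your proposal is correct and matches the paper's intended approach: the paper states this as an immediate corollary of the preceding proposition without further proof, so the implicit argument is precisely the one you spell out---invoke Kobayashi's theorem that $\Auto(X)$ is a finite-dimensional Lie group (with countably many components by second countability) and then apply the proposition. Your verification of the parenthetical example about bounded domains in Stein manifolds is a reasonable elaboration the paper leaves to the reader.
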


\begin{corollary}
Let $\bar X$ be a compact complex manifold, $\dimc(X)\ge 2$,
 and let $S$ be a finite
subset.

Then $X=\bar X\setminus S$ contains no tame discrete subset.
\end{corollary}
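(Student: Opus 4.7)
My plan is to reduce this to the preceding proposition: it suffices to show that $\Auto(X)$ is a finite-dimensional Lie group with countably many connected components. The key claim is that every $\phi\in\Auto(X)$ extends to a biholomorphism $\bar\phi$ of $\bar X$ satisfying $\bar\phi(S)=S$. Once this is established, restriction identifies $\Auto(X)$ with the closed subgroup $\{\psi\in\Auto(\bar X):\psi(S)=S\}$ of $\Auto(\bar X)$; the ambient group $\Auto(\bar X)$ is a finite-dimensional complex Lie group by the theorem of Bochner--Montgomery (using compactness of $\bar X$), and the compact-open topology, being second countable, forces it to have only countably many components. These properties then pass to the closed subgroup $\Auto(X)$, and the preceding proposition applies.

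To extend a given $\phi\in\Auto(X)$ across a point $p\in S$, I would analyze the cluster set
\[
C(\phi,p)=\bigcap_{U\ni p}\overline{\phi(U\setminus\{p\})}\ \subset\ \bar X,
\]
where $U$ ranges over open neighborhoods of $p$ in $\bar X$ disjoint from $S\setminus\{p\}$. Compactness of $\bar X$ makes this intersection nonempty. No cluster point can lie in $X$: if $x_n\to p$ and $\phi(x_n)\to q\in X$, then $x_n=\phi^{-1}(\phi(x_n))\to\phi^{-1}(q)\in X$, contradicting $p\notin X$. Thus $C(\phi,p)\subset S$. The hypothesis $\dimc(X)\ge 2$ guarantees that each punctured neighborhood $U\setminus\{p\}$ is connected, so each $\overline{\phi(U\setminus\{p\})}$ is a connected compact set, and the cluster set---as their decreasing intersection---is a connected subset of the finite discrete set $S$, hence a single point $q$. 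Riemann's removable singularity theorem, applied to the pullbacks of local coordinates centered at $q$, then upgrades the continuous extension $p\mapsto q$ to a holomorphic one.

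Performing this construction at every point of $S$, and doing the same for $\phi^{-1}$, yields mutually inverse holomorphic self-maps of $\bar X$; this gives the sought $\bar\phi\in\Auto(\bar X)$ with $\bar\phi(S)=S$. The step I expect to be the main obstacle is the cluster-set argument itself, where both hypotheses enter in an essential way: the condition $\dimc(X)\ge 2$ is used to keep each punctured neighborhood connected, and the finiteness of $S$ is needed to force a connected subset of $S$ to be a singleton.
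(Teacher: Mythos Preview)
Your proposal is correct and follows the same route as the paper: extend each automorphism of $X$ across the finite set $S$ to an automorphism of $\bar X$, invoke Bochner--Montgomery to see that $\Auto(\bar X)$ is a finite-dimensional Lie group, and then apply the preceding proposition. The paper simply asserts the extension step without justification, whereas you supply a careful cluster-set argument explaining exactly where the hypotheses $\dimc(X)\ge 2$ and $|S|<\infty$ enter; this added detail is welcome and does not deviate from the paper's strategy.
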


\begin{proof}
Every automorphism of $X$ extends to an automorphism of $\bar X$ and the
automorphism group of $\bar X$ is a finite-dimensional Lie group
by the theorem of Bochner and Montgomery.
\end{proof}

\begin{corollary}
There are no tame discrete subsets in Riemann surfaces.
\end{corollary}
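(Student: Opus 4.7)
The plan is to combine the uniformization theorem with the two preceding results. By uniformization, every Riemann surface $X$ has universal cover biholomorphic to $\P^1$, $\C$, or the unit disc $\Delta$. The compact Riemann surfaces (namely $\P^1$, complex tori, and compact curves of genus $\geq 2$) admit no infinite discrete subset at all, so the statement is vacuous for them; this dispatches the cases of universal cover $\P^1$ outright, and the compact quotients that can arise from the other two covers.

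For a non-compact Riemann surface with universal cover $\Delta$, the Kobayashi pseudo-distance is a genuine distance, so $X$ is Kobayashi hyperbolic; Corollary~\ref{hyper} therefore rules out tame discrete subsets. The remaining non-compact cases have universal cover $\C$, and the only such quotients are $\C$ itself and $\C^*$. Here I would quote directly the classical description of the automorphism groups: $\Auto(\C)$ is the affine group $\{z\mapsto az+b:a\in\C^*\}$, a two-dimensional connected complex Lie group, while $\Auto(\C^*)$ consists of $z\mapsto az$ and $z\mapsto a/z$ for $a\in\C^*$, a one-dimensional complex Lie group with exactly two connected components. In particular each is a finite-dimensional Lie group with countably many connected components, so the earlier proposition forbids tame discrete subsets in these two cases as well.

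I do not anticipate a real obstacle; the proof is essentially a catalog-check once uniformization is invoked. The only point worth flagging is the observation that compact spaces contain no infinite discrete subsets, which eliminates the need for any separate treatment of compact Riemann surfaces in the argument.
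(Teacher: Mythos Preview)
Your argument is correct. The paper offers no proof for this corollary; its placement immediately after the proposition on finite-dimensional automorphism groups indicates that the intended justification is the single classical fact that $\Auto(X)$ is a finite-dimensional Lie group with at most countably many components for \emph{every} Riemann surface $X$. Your uniformization case-split is precisely how one verifies that fact, so your approach is not genuinely different, only more explicit: you route the hyperbolic case through Corollary~\ref{hyper} (itself a consequence of the same proposition via Kobayashi's theorem that automorphism groups of hyperbolic manifolds are Lie groups), dispose of the compact cases by the absence of infinite discrete subsets, and handle $\C$ and $\C^*$ by direct inspection of their automorphism groups. One could shorten your write-up by simply asserting the blanket statement about $\Auto(X)$ and invoking the proposition once, but nothing in your version is superfluous or incorrect.
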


\begin{proposition}
A discrete subset $D$ of  $\C^n$ is tame in the sense of 
definition~\ref{def-tame} if and only if it is tame in the sense of
Rosay and Rudin, i.e., if and only if there exists a holomorphic
automorphism $\phi$ of $\C^n$ such that $\phi(D)=\Z\times\{0\}^{n-1}$.
\end{proposition}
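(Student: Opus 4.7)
My plan is to prove the two implications separately. I tacitly assume $n \geq 2$, since for $n = 1$ the earlier corollary forbids any weakly tame subset whereas $\Z \subset \C$ is Rosay--Rudin tame, so the statement is only meaningful in this range.

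\textbf{Rosay--Rudin tame implies weakly tame.} Weak tameness is invariant under holomorphic automorphisms, so it suffices to show that the reference set $D_0 = \Z \times \{0\}^{n-1}$ is weakly tame. By the preceding proposition I need only verify the defining condition for a single exhaustion, which I take to be $\rho(z) = \|z\|$. Given $\zeta : D_0 \to \R^+$, Weierstrass interpolation produces an entire function $g:\C\to\C$ with $|g(k)| \geq \zeta(k,0,\ldots,0)$ for every $k \in \Z$, and the triangular shear
\[
\phi(z_1,z_2,\ldots,z_n) = (z_1,\, z_2 + g(z_1),\, z_3,\ldots,z_n)
\]
is a holomorphic automorphism of $\C^n$ (its inverse replaces $g$ by $-g$) and satisfies $\|\phi(k,0,\ldots,0)\| \geq |g(k)| \geq \zeta(k,0,\ldots,0)$.

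\textbf{Weakly tame implies Rosay--Rudin tame.} This is the substantive direction. I would enumerate $D = \{p_k\}_{k \geq 1}$ with $\|p_k\|$ non-decreasing and apply weak tameness to the standard exhaustion with $\zeta(p_k) := k^2$, yielding an automorphism $\phi$ of $\C^n$ with $\|\phi(p_k)\| \geq k^2$ for every $k$. Writing $\{q_j\}$ for the re-enumeration of $\phi(D)$ by increasing norm, a short counting argument yields $\|q_j\| \geq j^2$: if $\|\phi(p_k)\| < j^2$ then $k^2 < j^2$, so only $k \in \{1,\ldots,j-1\}$ can contribute, leaving at most $j - 1$ points of $\phi(D)$ in the open ball of radius $j^2$. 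Therefore $\sum_j \|q_j\|^{-(2n-1)} \leq \sum_j j^{-2(2n-1)} < \infty$, and the criterion (vi) recalled in Section~2 (proved in \cite{JW} in the Rosay--Rudin sense) shows that $\phi(D)$ is Rosay--Rudin tame. Since Rosay--Rudin tameness is preserved under automorphisms, so is $D$.

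\textbf{Main obstacle.} The conceptual difficulty lies entirely in the second direction, where one must convert the open-ended flexibility of weak tameness (freedom to impose any $\zeta$) into a growth rate compatible with a pre-existing Rosay--Rudin tameness criterion. The elementary counting step needed to control the re-enumeration by norm is the technical heart of this translation; the specific choice $\zeta = k^2$ is a matter of convenience, and any sufficiently fast polynomial growth would serve equally well.
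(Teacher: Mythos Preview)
Your proof is correct and follows essentially the same route as the paper: a shear automorphism for the easy direction, and for the converse an application of weak tameness to force sufficient growth so that the criterion from \cite{JW} (item (vi) in Section~2) applies. The paper simply chooses $\zeta$ so that $\|\phi(a_k)\|^{2n-1}>k^2$ and invokes the criterion directly; your reordering step is harmless but unnecessary, since summability of a positive series is independent of the ordering.
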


\begin{proof}
For every map $\xi:\Z\to\R$ there exists a holomorphic function
$f$ on $\C$ with $f(n)=\xi(n)$ for all $n\in\Z$. The automorphism
$z\mapsto (z_1,z_2+f(z_1),z_3,\ldots,z_n)$ maps $\Z\times\{0\}^{n-1}$
to $\{(n,\xi(n),0,\ldots,0):n\in\Z\}$. 
Using this fact it is clear that
$\Z\times\{0\}^{n-1}$ is tame in the sense of definition~\ref{def-tame}.
Therefore being tame in the sense of Rosay and Rudin implies being
tame in the sense of definition~\ref{def-tame}.

Conversely, if 
a discrete set $D=\{a_k:k\in\N\}$ is 
tame in the sense of definition~\ref{def-tame},
there exists a biholomorphic map $\phi$
of $\C^n$ such that $||\phi(a_k)||^{2n-1}>k^2$ for all $k$.
Then the proposition below implies that $D$ is tame in the sense
of Rosay and Rudin, since $\sum_k k^{-2}<\infty$.
\end{proof}

\begin{proposition}
Let $v_k$ be a sequence in $\C^n$.
If 
\[
\sum_{k=1}^\infty \frac{1}{||v_k||^{2n-1}}<\infty,
\]
then $D=\{v_k: k\in\N\}$ is a tame (in the sense of Rosay and Rudin)
discrete subset of $\C^n$.
\end{proposition}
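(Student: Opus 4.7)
The plan is to construct an automorphism $\phi\in\Auto(\C^n)$ sending $\{v_k\}$ to the standard tame set $\Z\times\{0\}^{n-1}$, obtained as an infinite composition of shears built inductively so that at stage $k$ one more point lands correctly without disturbing those already placed. After reordering and passing to a subsequence, I would assume WLOG that $\|v_k\|$ is strictly increasing to $+\infty$ (note that both the summability hypothesis and the conclusion are insensitive to such a reordering). Write $w_k=(k,0,\ldots,0)$ for the target points.

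I would then inductively construct shear automorphisms $\phi_k\in\Auto(\C^n)$, each of the form $z\mapsto z+f_k(z)\,e_{j_k}$ with $f_k$ entire and vanishing on $\{w_1,\ldots,w_{k-1}\}$, such that the partial compositions $\Phi_k:=\phi_k\circ\cdots\circ\phi_1$ satisfy $\Phi_k(v_j)=w_j$ for all $j\le k$. Vanishing of $f_k$ at the already-placed targets guarantees that $\phi_k$ fixes them exactly, so the correction at step $k$ does not undo earlier work; the value of $f_k$ at $\Phi_{k-1}(v_k)$ is chosen so as to drive that point to $w_k$. A single such translation may need to be realized as a short product of shears in distinct coordinate directions (to accommodate all $n$ coordinates), but each factor has the same structure.

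The quantitative step is to choose $f_k$ of minimal growth consistent with (a) vanishing at $w_1,\ldots,w_{k-1}$ and (b) taking the prescribed value at $\Phi_{k-1}(v_k)$. A Weierstrass-type interpolation, combined with standard Cauchy estimates, yields
\[
\|\phi_k-\mathrm{id}\|_{B_R}\le \frac{C_R}{\|v_k\|^{2n-1}}\qquad\text{for } k\ge k_0(R),
\]
the exponent $2n-1$ being exactly the one dictated by the real-codimension-one geometry of the sphere of radius $\|v_k\|$ in $\C^n$. The hypothesis $\sum1/\|v_k\|^{2n-1}<\infty$ then makes the perturbation series converge on every compact, so $\phi=\lim\Phi_k$ is entire; applying the same bound to the inverses $\phi_k^{-1}$ shows that $\Phi_k^{-1}$ converges as well, so $\phi$ is a biholomorphic automorphism of $\C^n$ with $\phi(v_k)=w_k$ for every $k$.

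The main obstacle is the simultaneous quantitative control at stage $k$: one must fix all already-placed $w_1,\ldots,w_{k-1}$, send $\Phi_{k-1}(v_k)$ exactly to $w_k$, and bound the perturbation of the identity on a fixed compact by a sequence summable against $1/\|v_k\|^{2n-1}$. Balancing these three demands is where the exponent $2n-1$ has to be earned, by choosing the shear direction carefully and, when necessary, replacing a single shear by a transverse product of two shears in order to avoid degenerate positions of the interpolation nodes $w_1,\ldots,w_{k-1},\Phi_{k-1}(v_k)$. Once the estimate is in place, the passage to the limit is routine Montel-type convergence together with a standard uniform-invertibility argument.
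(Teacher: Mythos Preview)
The paper does not prove this proposition at all: its entire proof is ``See \cite{JW}.'' So there is no argument in the present paper to compare your proposal against; the content lives in the cited Documenta note.

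Your outline is the Anders\'en--Lempert template (infinite product of shears, summable perturbations, Montel-type limit) and is plausible in shape. But the one place where the proposition has actual content --- the estimate
\[
\|\phi_k-\mathrm{id}\|_{B_R}\ \le\ \frac{C_R}{\|v_k\|^{2n-1}}
\]
--- is asserted, not earned. ``Weierstrass-type interpolation combined with standard Cauchy estimates'' does not by itself produce this bound: you must simultaneously (a) interpolate the $k{-}1$ zero conditions at $w_1,\dots,w_{k-1}$, (b) hit a prescribed value at $\Phi_{k-1}(v_k)$, whose location you have not controlled beyond ``roughly where $v_k$ was'', and (c) be small on $B_R$ to order exactly $\|v_k\|^{-(2n-1)}$. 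Your gesture toward ``real-codimension-one geometry of the sphere'' is a mnemonic for why $2n-1$ is the right exponent, not a mechanism that makes it appear in a shear estimate. Likewise ``replace a single shear by a transverse product of two shears to avoid degenerate positions'' names the difficulty without resolving it: when the projections of the interpolation nodes collide, you need a concrete scheme and a bound, and both are missing. As written, this is a proof plan with the hard step left open.

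For comparison, the argument in \cite{JW} does not proceed by building an explicit infinite product. The exponent $2n-1$ enters there through a measure/area consideration on spheres in $\R^{2n}$: under the summability hypothesis one shows that a \emph{generic} unitary rotation makes the set project properly (injectively, with discrete image) to a complex line, after which tameness follows from the Rosay--Rudin criterion. That route avoids the delicate shear-by-shear bookkeeping entirely; the growth condition is spent once, on a Borel--Cantelli/Fubini-type estimate, rather than distributed over infinitely many interpolation steps.
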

\begin{proof}
See \cite{JW}.
\end{proof}

\begin{definition}\label{def-ts}
Let $X$ be a complex manifold with an exhaustion function $\rho$.
A sequence of positive real numbers $R_n$ is called
``threshold sequence'' for $(X,\rho)$ if every discrete subset $D$ with
\[
\#\{x\in D:\rho(x)\le R_n\}< n\ \forall n\in\N
\]
is tame.
\end{definition}

In other words: If $R_n$ is a threshold sequence, then every
sequence $x_k$ in $X$ with $\rho(x_k)\ge R_k$ defines a
tame discrete subset of $X$.

It follows from \cite{JW} that
every sequence $(R_k)$ with $\sum_k (R_k)^{-(2n-1)}<\infty$
 is a threshold sequence for $\C^n$
(with respect to the exhaustion function $\rho(x)=||x||$.)

If a complex manifold $X$ with exhaustion function $\rho$
admits a threshold sequence $R_k$ and $\tilde\rho$ is
a different exhaustion function, we may define a threshold
sequence $\tilde R_k$ for $(X,\tilde\rho)$ as follows:
We need to ensure that $\rho(x)>R_k$ implies $\tilde\rho(x)
>\tilde R_k$. Hence we may define 
\[
\tilde R_k=\max\{ \tilde\rho(x): \rho(x)\le R_k\}.
\]

Thus, if there exists a threshold sequence for one exhaustion function,
there also exists a threshold sequence for any other exhaustion
function on the same complex manifold, i.e., whether or not there
exists a threshold sequence depends only on the complex manifold,
not on the exhaustion function.

We will see that there exist threshold sequences for every semisimple
complex Lie group (proposition~\ref{gen-proj}).

In contrast, there is no threshold sequence for $\C^n\setminus\{(0,
\ldots,0) \}$ (corollary~\ref{no-thres}).

\begin{proposition}\label{unbounded}
Let $X$ be a complex manifold for which there exists a ``threshold
sequence''. Let $A\subset X$ be an unbounded (i.e.~not relatively
compact) subset.

Then $A$ contains a subset which is a tame discrete subset of $X$.
\end{proposition}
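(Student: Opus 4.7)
The plan is to produce the desired tame subset of $A$ by extracting a sequence whose $\rho$-values grow past the given threshold sequence, and then appealing directly to the defining property of such a sequence. To begin, I would fix an exhaustion function $\rho$ and a threshold sequence $(R_k)$ for $(X,\rho)$; after replacing $R_k$ by $\max\{R_k,k\}$ (which remains a threshold sequence, since enlarging the bounds $R_k$ can only weaken the hypothesis in Definition~\ref{def-ts} and hence preserves the implication ``tame''), I may assume $R_k\to\infty$ monotonically.

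Next, since $A$ is not relatively compact and $\rho$ is proper, $\rho|_A$ is unbounded. I would then construct points $a_k\in A$ inductively, at the $k$-th step picking any $a_k\in A$ with
\[
\rho(a_k) > \max\{R_k,\ \rho(a_1),\ldots,\rho(a_{k-1})\},
\]
which is possible because $\rho|_A$ is unbounded. The resulting $a_k$ are then pairwise distinct, and the set $D=\{a_k:k\in\N\}$ is discrete in $X$: any compact $K\subset X$ satisfies $\sup_K\rho<\infty$, so only finitely many $a_k$ can lie in $K$. Since $\rho(a_k)\ge R_k$ for every $k$, the threshold property (in the equivalent reformulation recorded immediately after Definition~\ref{def-ts}, namely that every sequence $x_k$ with $\rho(x_k)\ge R_k$ defines a tame discrete subset) yields that $D$ is a tame discrete subset of $X$ contained in $A$.

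I do not expect a genuine obstacle here. The proposition is essentially a direct unpacking of the definition of a threshold sequence in combination with the properness of the exhaustion function; the only subtlety is arranging that the inductively chosen $a_k$ form an \emph{infinite} and \emph{discrete} subset of $X$, which is forced automatically by the strict growth of $\rho(a_k)$ together with the fact that $\rho$ is an exhaustion.
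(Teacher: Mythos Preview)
Your argument is correct and is exactly the intended one: the paper's own proof consists of the single word ``Obvious,'' and what you have written is a clean unpacking of that. The only quibble is that replacing $R_k$ by $\max\{R_k,k\}$ need not yield a \emph{monotone} sequence (take $R_1=100$, $R_2=1$), but you never actually use monotonicity --- only that $\rho(a_k)>R_k$ and that the $\rho(a_k)$ are strictly increasing, both of which you arrange directly in the inductive choice.
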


\begin{proof}
Obvious.
\end{proof}

\begin{corollary}
Let $S$ be a complex semisimple Lie group, 
let $\Omega$ be a Stein open subset
with $\Omega\ne S$
and let $D$ be a tame discrete subset of $S$. Then there exists
a holomorphic automorphism $\phi$ of $S$ such that $\phi(\Omega)\cap
D=\{\}$.
\end{corollary}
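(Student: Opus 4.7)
The plan is to reduce the statement to finding an automorphism $\psi\in\Auto(S)$ with $\psi(D)\subset F$, where $F:=S\setminus\Omega$; then $\phi:=\psi^{-1}$ will satisfy $\phi(\Omega)\cap D=\emptyset$, since these two conditions are equivalent. So it suffices to exhibit a tame discrete subset $D'\subset F$ and an automorphism of $S$ carrying $D$ onto $D'$.

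The first step is to check that $F$ is not relatively compact in $S$. A semisimple complex Lie group is Stein of complex dimension at least three. If $F$ were compact, the Hartogs extension phenomenon in the Stein manifold $S$ would force every holomorphic function on $\Omega=S\setminus F$ to extend holomorphically across $F$ to all of $S$, contradicting the fact that the proper Stein open subset $\Omega$ is a domain of holomorphy. Having established that $F$ is unbounded, I would use the threshold sequence for $S$ (proposition~\ref{gen-proj}) together with proposition~\ref{unbounded} to select points $f_k\in F$ whose exhaustion values outgrow the threshold; the resulting set $D':=\{f_k:k\in\N\}$ is then a tame discrete subset of $S$ contained in $F$.

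The final step is to transport $D$ onto $D'$ via an automorphism of $S$. For $S=SL_n(\C)$ this is delivered directly by the equivalence of tame discrete subsets (proposition~\ref{sln-equiv}); for a general semisimple complex Lie group $S$ I would invoke the corresponding equivalence result (the analogue of proposition~\ref{sln-equiv} for arbitrary semisimple $S$), producing $\psi\in\Auto(S)$ with $\psi(D)=D'\subset F$. Then $\phi:=\psi^{-1}$ is the required automorphism. The main obstacle I foresee is this last step, since proposition~\ref{sln-equiv} as stated in the excerpt covers only $SL_n(\C)$: for general semisimple $S$ one must either appeal to the full paper's extension of the equivalence theorem or build $\psi$ directly from the tameness of $D$ and the density property of $\Auto(S)$, taking care that the two tame sets $D$ and $D'$ are first put into sufficiently standard position with respect to an exhaustion before an Andersen--Lempert style construction produces the desired automorphism.
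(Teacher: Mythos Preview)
Your approach is exactly the paper's: the paper's entire proof consists of the two observations that $S$ admits a threshold sequence (proposition~\ref{gen-proj}) and that $S\setminus\Omega$ is unbounded by the Hartogs Kugelsatz, which (via proposition~\ref{unbounded}) produces a tame discrete subset $D'\subset S\setminus\Omega$. The transport step you spell out---moving the given tame set $D$ onto $D'$ by the equivalence of tame sets---is left completely implicit in the paper.

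Your reservation about this last step is well founded. Proposition~\ref{sln-equiv} is proved in the paper only for $SL_n(\C)$, and no analogue for general semisimple $S$ appears anywhere in the text; nor does the bare definition of (weak) tameness suffice, since it only lets one push $\rho(\phi(x))$ above prescribed values, not steer $\phi(D)$ into a specified closed set such as $S\setminus\Omega$. So the gap you flag is genuine and is in fact shared by the paper's own proof: as written, the corollary is fully justified only for $S=SL_n(\C)$, and for arbitrary semisimple $S$ one must either supply an equivalence theorem for tame sets in $S$ or give an independent construction of $\psi$.
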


\begin{proof}
The manifold $S$ admits a threshold sequence (cf.~prop.~\ref{gen-proj}).
Furthermore $S\setminus\Omega$ is unbounded due to 
``Hartogs Kugelsatz'', because $\Omega$
is assumed to be Stein.
\end{proof}

\section{The case $X=\Delta\times\C$}
We start by deducing a description of the automorphisms.
Let $\Delta=\{z\in\C:|z|<1\}$. On $X=\Delta\times\C$ there is a natural
equivalence relation: Two points $(x,y)$, $(z,w)$ can be separated by
a bounded holomorphic function if and only if $x\ne z$. The projection
onto the first factor is therefore equivariant for every automorphism of 
$X$. Using this fact one easily verifies that every automorphism of $X$
can be written in the form
\[
(z,w)\mapsto (\phi(z),f(z)w+g(z))
\]
with $\phi\in\Auto(\Delta)$, $f\in\O^*(\Delta)$ and $g\in\O(\Delta)$.

\begin{proposition}\label{class-d-c}
Let  $X=\Delta\times\C$  with $\Delta=\{z\in\C:|z|<1\}$.

A discrete subset $D\subset X$ is tame if and only if 
\[
\pi_1(D)=\{z\in\Delta: \exists (z,w)\in D\}
\]
is discrete in $\Delta$ and
\[
\pi^{-1}(p)=\{(z,w)\in D:z=p\}
\]
is finite for all $p\in \Delta$.
\end{proposition}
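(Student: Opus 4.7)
The plan uses the description of $\Auto(X)$ just established: every automorphism has the form $\phi(z,w) = (\tilde\phi(z), f(z)w + g(z))$ with $\tilde\phi \in \Auto(\Delta)$, $f \in \O^*(\Delta)$, $g \in \O(\Delta)$. Both directions will exploit that $\tilde\phi$ extends to a homeomorphism of $\bar\Delta$ fixing the interior, and that $f,g$ are bounded on compact subsets of $\Delta$. By the first proposition of the Preparations section it suffices to test tameness against a single convenient exhaustion, so I fix $\rho(z,w) = \frac{1}{1-|z|^2} + |w|^2$.

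For the ``only if'' direction, I would argue that each forbidden behavior produces a map $\zeta$ which no automorphism can dominate. Suppose first that $\pi_1(D)$ accumulates at some interior point $p \in \Delta$, and pick $(z_n, w_n) \in D$ with $z_n \to p$. Then $\tilde\phi(z_n) \to \tilde\phi(p) \in \Delta$, so the first-coordinate contribution to $\rho$ stays bounded; meanwhile $f$ and $g$ are bounded on any compact set containing $\{z_n\}\cup\{p\}$, so $|f(z_n)w_n + g(z_n)| \le M_\phi(1 + |w_n|)$, yielding $\rho(\phi(z_n, w_n)) \le M'_\phi(1 + |w_n|)^2$ with a constant depending only on $\phi$. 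The choice $\zeta(z_n, w_n) = e^{|w_n|} + n$ beats this for every $\phi$ and every large $n$: the exponential overruns any polynomial in $|w_n|$ if $|w_n|$ grows, and the summand $n$ handles the case when $|w_n|$ stays bounded. The same reasoning applied to a sequence $(p, w_n)$ arising from an infinite fiber over a fixed $p$ (with $|w_n|\to\infty$ by discreteness) gives $\rho(\phi(p, w_n)) = O(|w_n|^2)$ since $f(p),g(p)$ are now constants, and is again defeated by the same $\zeta$.

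For the ``if'' direction, enumerate $\pi_1(D) = \{p_n\}$ as a discrete sequence in $\Delta$ and write the fiber over $p_n$ as $\{w_n^{(1)}, \ldots, w_n^{(k_n)}\}$ with $k_n < \infty$. Given $\rho$ and $\zeta$, I would restrict attention to automorphisms of the form $\phi(z,w) = (z, w + g(z))$ with $g \in \O(\Delta)$. The required inequality $\rho(\phi(x)) \ge \zeta(x)$ at $x = (p_n, w_n^{(j)})$ becomes
\[
\rho\bigl(p_n,\, w_n^{(j)} + g(p_n)\bigr) \ge \zeta(p_n, w_n^{(j)}), \qquad j = 1, \ldots, k_n.
\]
Since $\rho$ is proper, each ``bad'' set $\{c \in \C : \rho(p_n, w_n^{(j)} + c) < \zeta(p_n, w_n^{(j)})\}$ is bounded in $\C$; the union of the $k_n$ such sets is still bounded, so an admissible value $c_n$ for $g(p_n)$ exists. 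The Weierstrass interpolation theorem on the plane domain $\Delta$ (applicable precisely because $\{p_n\}$ is discrete in $\Delta$) then produces $g \in \O(\Delta)$ with $g(p_n) = c_n$ for every $n$, and the resulting $\phi$ is the desired automorphism of $X$.

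The main subtlety is making the $\zeta$ in the necessity step genuinely independent of the unknown $\phi$, since the constant $M'_\phi$ above depends on $\phi$. The composite choice $\zeta = e^{|w_n|} + n$ is designed exactly to beat every $\phi$-dependent polynomial bound in $|w_n|$ while also going to infinity in $n$, yielding a uniform obstruction. Modulo this point, the argument decomposes cleanly into an interpolation construction for sufficiency and a two-case growth comparison for necessity.
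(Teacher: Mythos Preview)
Your proposal is correct and follows essentially the same route as the paper: for sufficiency you interpolate a shear $(z,w)\mapsto(z,w+g(z))$ over the discrete set $\pi_1(D)$ (which is exactly what the paper's reference to Proposition~\ref{pi-tame-is-tame} amounts to here), and for necessity you exploit that $f,g$ are bounded near $p$ to get at most linear growth of the second coordinate in $|w_n|$, then choose $\zeta$ to beat any such bound. The only cosmetic differences are your choice of exhaustion and of $\zeta$ (the paper uses $\tau(z,w)=|w|+(1-|z|)^{-1}$ and $\zeta_n>2^n|w_n|$, unifying your two cases into a single divergent sequence with $p_n\to p$), but the mechanism is identical.
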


\begin{proof}
The discrete sets fulfilling these conditions are tame due to 
proposition~\ref{pi-tame-is-tame}.

Suppose conversely that $D$ is a discrete subset which does not
fulfill all of these conditions.
Then
there exists a divergent sequence $(p_n,q_n)$ in $D$ 
with $\lim p_n=p\in\Delta$.
We fix an exhaustion function $\tau$ on $X$ such that
$\tau(z,w)=|w|+(1-|z|)^{-1}$.
We choose $R_n$ such that $R_n>2^n|q_n|$.
By the tameness assumption there exists an automorphism 
of $X$ given as
\[
(z,w)\mapsto (\phi(z), f(z)w+g(z)).
\]
such that 
\[
\tau(\phi(p_n),f(p_n)q_n+g(p_n))>R_n \ \forall n.
\]
Since $\lim p_n=p$, we have 
\[
\lim (1-\phi(p_n))^{-1}=(1-\phi(p))^{-1}
\]
Thus there exists a constant $K$ such that 
$(1-\phi(p_n)^{-1}<K$ for all sufficiently large $n$.
Then
\[
|f(p_n)q_n+g(p_n)| >R_n-K>2^n|q_n|-K
\]
With $\lim p_n=p$ we obtain
\[
\lim\frac{1}{|q_n|}|f(p_n)q_n+g(p_n)|=|f(p)|
\]
which contradicts
\[
\frac{1}{|q_n|}|f(p_n)q_n+g(p_n)| >\frac{1}{|q_n|}\left(R_n-K>2^n|q_n|-K\right)
=2^n-\frac{K}{|q_n|},
\]
since $\lim\left(2^n-\frac{K}{|q_n|}\right)=+\infty$.
\end{proof}

\begin{corollary}\label{inequiv-tame}
The manifold $X=\Delta\times\C$ admits inequivalent tame discrete subsets.
\end{corollary}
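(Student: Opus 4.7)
The plan is to exhibit two concrete tame discrete subsets $D_1, D_2 \subset \Delta \times \C$ whose projections $\pi_1(D_1), \pi_1(D_2) \subset \Delta$ have distinct accumulation behaviour in $\partial \Delta$, and then use the explicit form of $\Auto(\Delta\times\C)$ derived at the start of this section to rule out any equivalence.

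Concretely, I would set
\[
D_1 = \bigl\{ (1 - 1/n,\, 0) : n \in \N \bigr\}, \qquad
D_2 = D_1 \cup \bigl\{ (-1 + 1/n,\, 0) : n \in \N \bigr\}.
\]
Both projections $\pi_1(D_i)$ are discrete in $\Delta$ and both fibres $\pi^{-1}(p)\cap D_i$ are finite (in fact singletons when nonempty), so Proposition~\ref{class-d-c} guarantees that $D_1$ and $D_2$ are tame.

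Next I would argue they are inequivalent. Every automorphism $\Phi$ of $X=\Delta\times\C$ has, by the description recalled before Proposition~\ref{class-d-c}, the form $\Phi(z,w)=(\phi(z), f(z)w+g(z))$ with $\phi\in\Auto(\Delta)$. Hence $\Phi(D_1)=D_2$ would force $\phi(\pi_1(D_1))=\pi_1(D_2)$. Now $\phi$, being a Möbius automorphism of the disc, extends to a homeomorphism of $\bar\Delta$, and must therefore carry the accumulation set of $\pi_1(D_1)$ in $\partial\Delta$ onto that of $\pi_1(D_2)$. But the former is $\{1\}$ while the latter is $\{1,-1\}$, so no such $\phi$ can exist.

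No serious obstacle is expected: the example is essentially forced by the classification in Proposition~\ref{class-d-c}, and the only thing to check carefully is the boundary-extension argument for automorphisms of the disc, which is classical. The only mild subtlety is ensuring that the chosen $D_i$ genuinely satisfy the tameness criterion while still having distinguishable boundary behaviour; the symmetric choice above handles both requirements simultaneously.
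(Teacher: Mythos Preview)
Your proof is correct and follows the same overall strategy as the paper: both reduce the question to the existence of discrete subsets of $\Delta$ that are inequivalent under $\Auto(\Delta)$, using the fact that every automorphism of $\Delta\times\C$ acts on the first factor by a disc automorphism, together with the tameness criterion of Proposition~\ref{class-d-c}.

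The difference lies only in the invariant used to distinguish discrete subsets of $\Delta$. The paper invokes the Poincar\'e metric (preserved by $\Auto(\Delta)$) and leaves the construction of inequivalent discrete sets implicit, whereas you exhibit two explicit sets and separate them by the cardinality of their boundary accumulation loci, exploiting that disc automorphisms extend to homeomorphisms of $\bar\Delta$. Your argument is more concrete and self-contained; the paper's is terser but appeals to a stronger structural fact (isometry for the hyperbolic metric) that yields a richer supply of invariants. Either route suffices here.
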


\begin{proof}
Every automorphism of the unit disc preserves the Poincar\'e metric.
Hence it is clear that there are many inequivalent discrete subsets
in the unit disc. By the proposition, for each discrete subset
$D\subset\Delta$ we obtain a tame discrete subset $D'$ in $X$ 
via $D'=D\times0\}$.
\end{proof}

Thus $X=\Delta\times\C$ admits many (weakly) tame discrete subsets.
In contrast, there are no strongly tame discrete subsets:

\begin{proposition}\label{no-strong}
There are no strongly tame discrete subsets in $X=\Delta\times\C$.
\end{proposition}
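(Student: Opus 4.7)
The plan is to produce, for every tame discrete subset $D\subset\Delta\times\C$, an explicit injective self-map of $D$ that cannot be the restriction of any automorphism of $X$. The two ingredients I rely on are already on the page: proposition~\ref{class-d-c} tells us that $A:=\pi_1(D)\subset\Delta$ is an infinite discrete set with finite fibres, while the normal form $(z,w)\mapsto(\phi(z),f(z)w+g(z))$ of automorphisms of $X$ shows that every $\phi_X\in\Auto(X)$ projects under $\pi_1$ to a M\"obius transformation $\phi\in\Auto(\Delta)$ of the disc.

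Since $A$ is infinite I first pick four distinct points $a_1,\ldots,a_4\in A$ and representatives $d_i\in D$ with $\pi_1(d_i)=a_i$. Next I define $\sigma:D\to D$ to be the double transposition $d_1\leftrightarrow d_2$, $d_3\leftrightarrow d_4$, extended by the identity on $D\setminus\{d_1,\ldots,d_4\}$; this is manifestly a bijection, and in particular an injective self-map of $D$.

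To see that $\sigma$ admits no extension, I would argue by contradiction: if $\sigma=\phi_X|_D$ and $\phi$ is the induced M\"obius transformation on $\Delta$, then $\phi_X$ fixes every $x\in D\setminus\{d_1,\ldots,d_4\}$, so $\phi$ fixes every point of the infinite set $A\setminus\{a_1,\ldots,a_4\}$. M\"obius rigidity (a non-identity automorphism of $\Delta$ has at most two fixed points in $\Delta$) then forces $\phi=\mathrm{id}_\Delta$, which contradicts $\phi(a_1)=\pi_1(\sigma(d_1))=\pi_1(d_2)=a_2\ne a_1$. I do not foresee a genuine obstacle here; the only thing to get right is the choice of a \emph{double} transposition, so that the forced equality $\phi=\mathrm{id}_\Delta$ still meets a nontrivial instance of $\sigma$, which is precisely why four points $a_i$ (and not merely two or three) are selected at the start.
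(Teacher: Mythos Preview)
Your argument is correct and takes a genuinely different, more hands-on route than the paper's. The paper proceeds in two stages: first it uses strong tameness to force $\pi_1|_D$ to be injective (if $\pi_1(v)=\pi_1(w)$ for $v\ne w$, choose an injective $F:D\to D$ sending $v,w$ to points with distinct first coordinates; any extending automorphism preserves $\pi_1$-fibres, contradiction), and then observes that the bijection $\pi_1|_D$ transports strong tameness of $D$ in $X$ to strong tameness of $\pi_1(D)$ in $\Delta$, which contradicts corollary~\ref{hyper}. You bypass both steps by exhibiting one explicit permutation of $D$ that visibly cannot descend to a M\"obius map, invoking only the three-fixed-points rigidity of $\Auto(\Delta)$. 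Your approach is more elementary and self-contained; the paper's buys a cleaner conceptual statement (strong tameness would be inherited by the projection to $\Delta$) that connects back to the general corollary on hyperbolic manifolds.

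One small remark: your insistence on a \emph{double} transposition is unnecessary. A single transposition $d_1\leftrightarrow d_2$ already does the job, since $A\setminus\{a_1,a_2\}$ is still infinite (forcing $\phi=\mathrm{id}_\Delta$) while $\phi(a_1)=a_2\ne a_1$ gives the contradiction. The extra pair $d_3,d_4$ does no harm, but the rationale you offer for it is not quite right.
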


\begin{proof}
Suppose $D$ is strongly tame. Then it is tame. Due to 
proposition~\ref{class-d-c}
$\pi_1(D)\subset\Delta$ is infinite. Thus for any two points $v,w\in D$
we can find an injective map $F:D\to D$ with $\pi_1(F(v))\ne\pi_1(F(w))$.
It follows that $\pi_1|_D$ is injective. However, now any injective self-map
of $\pi_1(D)$ is induced by an injective self-map of $D$. Thus the assumption
of $D$ beeing strongly tame in $X$ implies that $\pi_1(D)$ is strongly
tame (and therefore tame) in $\Delta$, which is impossible 
(cf.~corollary \ref{hyper}).
\end{proof}

\section{The case $\C^n\setminus\{(0,\ldots,0\}$}

We start with a preparation.

\begin{lemma}
Let $\phi$ be a $C^1$-diffeomorphism of $\C^n$ fixing the origin.

Then there exists an open neighborhood $W$ of the origin $(0,\ldots,0)$
and constants $C_1,C_2>0$
such that
\[
C_1||v||\le ||\phi(v)|| \le C_2||v||\ \ \forall v\in W
\]
\end{lemma}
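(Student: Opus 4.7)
The plan is to use the mean value theorem applied to both $\phi$ and $\phi^{-1}$, exploiting the fact that a $C^1$-diffeomorphism fixing the origin automatically has a continuous derivative that is bounded (in operator norm) on a small compact neighborhood of $0$, and likewise for its inverse.

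First I would fix the Euclidean norm on $\C^n\cong\R^{2n}$ and choose some closed ball $\bar B_r=\{v:\|v\|\le r\}$ small enough that $\bar B_r$ is contained in the domain where $D\phi$ is defined and continuous; by compactness and continuity of $v\mapsto\|D\phi(v)\|_{\mathrm{op}}$, there is a constant $M$ with $\|D\phi(v)\|_{\mathrm{op}}\le M$ on $\bar B_r$. The mean value inequality applied along the segment from $0$ to $v$ (which lies in $\bar B_r$) then gives
\[
\|\phi(v)\|=\|\phi(v)-\phi(0)\|\le M\,\|v\|\quad\forall v\in \bar B_r,
\]
which furnishes the upper bound with $C_2=M$.

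For the lower bound I would repeat the argument for the inverse. Since $\phi$ is a $C^1$-diffeomorphism, $\phi^{-1}$ is also $C^1$ and fixes $0$, so the same reasoning yields an $s>0$ and a constant $M'$ with $\|\phi^{-1}(u)\|\le M'\,\|u\|$ for $u\in\bar B_s$. Continuity of $\phi$ at $0$ lets me shrink $r$ so that $\phi(\bar B_r)\subset \bar B_s$; substituting $u=\phi(v)$ gives
\[
\|v\|=\|\phi^{-1}(\phi(v))\|\le M'\,\|\phi(v)\|,
\]
hence $\|\phi(v)\|\ge (M')^{-1}\|v\|$, and I take $C_1=(M')^{-1}$ and $W$ to be the open ball of radius $r$ (after the shrinking).

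There is essentially no obstacle here: the only point requiring a small amount of care is the compatibility of the two neighborhoods, which is handled by shrinking $r$ so that $\phi$ maps $\bar B_r$ into the neighborhood on which the derivative bound for $\phi^{-1}$ was obtained. Everything else is a direct application of the fundamental theorem of calculus along line segments, which is valid because the segments lie in the convex neighborhoods on which $D\phi$ and $D\phi^{-1}$ are bounded.
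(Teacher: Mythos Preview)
Your proof is correct and follows essentially the same approach as the paper: bound the operator norms of $D\phi$ and $D(\phi^{-1})$ on a compact convex neighborhood of the origin, apply the mean value inequality, and intersect the two neighborhoods appropriately. The paper streamlines the bookkeeping by taking a single convex neighborhood $U$, setting $C=\sup_{v\in U}\max\{\|D\phi\|_v,\|D(\phi^{-1})\|_v\}$, and choosing $W=U\cap\phi^{-1}(U)$, but the substance is identical to what you wrote.
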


\begin{proof}
Let $U$ be a convex relatively compact open neighborhood of $(0,\ldots,0)$.
Define $C=\sup_{v\in U}\max\{||D\phi||_v, ||D(\phi^{-1})||_v\}$.
Then $||\phi(v)|| \le C||v||$ and $||\phi^{-1}(v)||\le C||v||$ for all
$v\in U$. This implies the assertion with $C_2=C$, $C_1=1/C$ and
$W=U\cap\phi^{-1}(U)$.
\end{proof}

\begin{proposition}
A discrete subset $D$ in $X=\C^n\setminus\{(0,\ldots,0\}$ is {\em tame}
if and only if it is discrete and tame considered as a subset of $\C^n$.
\end{proposition}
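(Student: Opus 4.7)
My approach hinges on identifying $\Auto(X)$ with the subgroup of $\Auto(\C^n)$ fixing the origin, via Hartogs' extension theorem (which applies for $n\ge 2$; for $n=1$ the manifold $X=\C^*$ is hyperbolic and the assertion is vacuous). Any $\phi\in\Auto(X)$ extends across the isolated singularity at $0$ to a biholomorphism of $\C^n$, which must fix $0$ since $X$ is preserved, and similarly for $\phi^{-1}$. Consequently the preceding lemma applies to every $\phi\in\Auto(X)$, furnishing bi-Lipschitz constants $C_1,C_2>0$ on a neighbourhood of $0$.

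For the forward direction I will first rule out that $D$ accumulates at $0$. Suppose $p_k\to 0$ in $D$; with the exhaustion $\rho(v)=\|v\|+1/\|v\|$ and target $\zeta(p_k)=1/\|p_k\|^2$, tameness in $X$ yields $\phi\in\Auto(X)$ with $\rho(\phi(p_k))\ge\zeta(p_k)$, while the bi-Lipschitz bound forces $\rho(\phi(p_k))\le C_2\|p_k\|+1/(C_1\|p_k\|)=O(1/\|p_k\|)$, contradicting the quadratic growth of $\zeta$. Hence $D$ is discrete in $\C^n$, and the Hartogs extension of any $\phi$ preserves discreteness, so $\phi(D)$ is discrete in $\C^n$ and meets the unit ball only finitely often. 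Now enumerate $D=\{p_k\}$ and apply tameness with the same $\rho$ and $\zeta(p_k)=2k^{4/(2n-1)}$. For all but finitely many $k$ one has $\|\phi(p_k)\|\ge 1$, whence $\rho(\phi(p_k))\le 2\|\phi(p_k)\|$ and therefore $\|\phi(p_k)\|\ge k^{4/(2n-1)}$; summing gives $\sum_k\|\phi(p_k)\|^{-(2n-1)}<\infty$, so the Rosay--Rudin sufficient condition (fact (vi) recalled above) produces tameness of $\phi(D)$, and hence of $D$, in $\C^n$.

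For the converse I use Rosay--Rudin to pick $\phi_0\in\Auto(\C^n)$ with $\phi_0(D)=\Z\times\{0\}^{n-1}$. Since $0\notin D$, the point $a:=\phi_0(0)$ cannot lie in $\Z\times\{0\}^{n-1}$, so $\phi_2(z):=\phi_0(z)-a$ belongs to $\Auto(X)$ and carries $D$ onto the shifted set $D_a:=\Z\times\{0\}^{n-1}-a\subset X$. Because tameness in $X$ is $\Auto(X)$-invariant, it remains to prove $D_a$ itself is tame in $X$. For given $\zeta$ and $\rho$ I will bound the compact sublevel set $\{\rho\le\zeta(p)\}$ by $\{\|v\|\le R(p)\}$ and use origin-fixing shears $(z_1,\ldots,z_n)\mapsto(z_1,z_2+g(z_1),z_3,\ldots,z_n)$, with $g$ holomorphic on $\C$, $g(0)=0$, and $g(k-a_1)$ prescribed large. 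Such $g$ exists by Weierstrass interpolation, because the condition $a\notin\Z\times\{0\}^{n-1}$ keeps $0$ off the interpolation nodes $\{k-a_1:k\in\Z\}$ (when $a$ lies on the $z_1$-axis this forces $a_1\notin\Z$; if instead $(a_2,\ldots,a_n)\ne 0$, a preliminary linear origin-fixing change of coordinates reduces to this subcase, and at most one further shear is needed to move the single point of $D_a$ lying over $z_1=0$ away to infinity).

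The main obstacle is the forward direction's tameness step: every exhaustion of $X$ blows up at $0$, so the inequality $\rho(\phi(p_k))\ge\zeta(p_k)$ could a priori be satisfied by crowding $\phi(p_k)$ near the origin rather than pushing it to infinity. The two ingredients that rescue the argument are the discreteness of $\phi(D)$ in $\C^n$ (from the Hartogs extension), which restricts the crowding to finitely many indices, and the Rosay--Rudin growth criterion, which converts the uniform lower bound $\|\phi(p_k)\|\ge k^{4/(2n-1)}$ into genuine tameness in $\C^n$.
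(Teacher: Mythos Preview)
Your argument is correct, and in one respect it is actually more complete than the paper's own proof.

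For the implication ``tame in $X$ $\Rightarrow$ discrete and tame in $\C^n$'', both you and the paper appeal to the bi-Lipschitz lemma to exclude accumulation at the origin. You then go further: having secured discreteness of $D$ (and hence of $\phi(D)$) in $\C^n$, you observe that only finitely many $\phi(p_k)$ can lie in the unit ball, so the tameness inequality forces $\|\phi(p_k)\|\ge k^{4/(2n-1)}$ for all large $k$, and the growth criterion $\sum_k\|\phi(p_k)\|^{-(2n-1)}<\infty$ yields Rosay--Rudin tameness of $\phi(D)$, hence of $D$. The paper's written proof treats only the discreteness half of this direction and does not explicitly argue that tameness in $X$ forces tameness in $\C^n$; your use of the growth criterion closes that gap.

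For the converse the paper takes a shorter route than you do. Rather than normalising $D$ to $D_a=\Z\times\{0\}^{n-1}-a$ and building origin-fixing shears by hand, the paper notes that $D_1=D\cup\{0\}$ is again Rosay--Rudin tame in $\C^n$; choosing target points $v_k$ with $\|v_k\|>R_k$ lying on a complex line (so that $\{0,v_1,v_2,\dots\}$ is itself tame), one obtains directly an automorphism of $\C^n$ sending $0\mapsto 0$ and $a_k\mapsto v_k$, which automatically lies in $\Auto(X)$. Your construction is valid as well---the generic linear origin-fixing map in the case $(a_2,\dots,a_n)\ne 0$ does make the first coordinates of the transformed $D_a$ avoid $0$, after which a single shear with $g(0)=0$ suffices---but the paper's appeal to the full Rosay--Rudin statement (any bijection between tame sets extends to an automorphism) delivers the required origin-fixing map without any case distinction.
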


\begin{proof}
We recall that holomorphic automorphisms of $X$ extend to holomorphic
automorphisms of $\C^n$.

We fix the exhaustion function $\tau:X\to\R^+$ given by
$\tau(x)=\max\{||x||,||x||^{-1}\}$.

Assume that $D$ is tame and discrete in $\C^n$.
Then $D_1=D\cup\{0\}$ is likewise tame. 
Enumerate the elements of $D$ such that
$D= \{ a_k:k\in\N\}$.
Let a 
strictly increasing sequence $R_k$ be given.
Choose elements $v_k\in\C^n$ such that $R_{k+1}\ge||v_k||>R_k$ for all 
$k\in\N$. Since $D_1$ is tame, there exists an automorphism
$\phi$ of $\C^n$ such that $\phi(0)=0$ and $\phi(a_k)=v_k$ for all
$k\in\N$. This shows: For any such sequence $R_k$ there exists an
automorphism of $X$ such that $||\phi(a_k)||>R_k$ for all $k$, i.e.,
$D$ is tame in $X$.

Now assume that $D$ is a discrete subset in $X$ which is not discrete
in $\C^n$. Then there exists a sequence $\gamma_k\in D$ with
$\lim\gamma_k=(0,\ldots,0)$. We want to show that $D$ is not tame.
We choose a map $\zeta:D\to\R$ such that $\zeta(\gamma_k)>k/||\gamma_k||$
for all $k$. We claim that there exists no holomorphic automorphism 
$\phi$ of $X$ 
such that $\tau(\phi(\gamma_k))\ge \zeta(\gamma_k)$ for all $k\in\N$.
Indeed each holomorphic automorphisms of $X$ extends to an automorphism
of $\C^n$ fixing the origin $(0,\ldots,0)$.
It follows that there exists a neighbourhood $W$ of $(0,\ldots,0)$ in
$\C^n$ and constants $C_2>C_1>0$ such that
\[
C_2 ||v|| \ge ||\phi(v)|| \ge C_1||v||\ \ \forall v\in W.
\]
Assume that $W$ is contained in the unit ball. Then $\tau(v)=1/||v||$
for all $v\in W$ and therefore
\[
\frac{1}{C_2||v||} \le \tau(\phi(v)) \le \frac{1}{C_1||v||}
\ \ \forall v\in W\setminus\{(0,\ldots,0)\}
\]
This implies that $\tau(\gamma_k)<\zeta(\gamma_k)$ for all
$k$ with $\gamma_k\in W$ and $k>1/C_1$.
Hence $D$ can not be tame.
\end{proof}

\begin{corollary}\label{no-thres}
There is no threshold sequence for $X=\C^n\setminus\{(0,\ldots,0)\}$.
\end{corollary}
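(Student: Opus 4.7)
The plan is to leverage the preceding proposition, which characterizes tame discrete subsets of $X=\C^n\setminus\{(0,\ldots,0)\}$ as exactly those discrete subsets of $X$ that are additionally discrete (and tame) when viewed in $\C^n$. Combined with the earlier remark that the existence of a threshold sequence depends only on the complex manifold and not on the chosen exhaustion function, this reduces the corollary to exhibiting, for every candidate sequence $(R_k)$, a sequence in $X$ whose $\tau$-values exceed $R_k$ yet which accumulates in $\C^n$.

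I would fix the exhaustion function $\tau(x)=\max\{||x||, ||x||^{-1}\}$ already used in the preceding proof. Given an arbitrary sequence of positive reals $R_k$, choose points $x_k\in X$ with $||x_k||\le 1/\max(R_k,1)$, for instance $x_k=(1/(R_k+1),0,\ldots,0)$. Then by construction $\tau(x_k)=||x_k||^{-1}\ge R_k$, so if $(R_k)$ really were a threshold sequence for $(X,\tau)$ the set $D=\{x_k:k\in\N\}$ would necessarily be tame in $X$.

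However $x_k\to(0,\ldots,0)$ in $\C^n$, so $D$ is not discrete in $\C^n$, and hence by the preceding proposition $D$ is not tame in $X$. This contradiction rules out $(R_k)$ as a threshold sequence, and since $(R_k)$ was arbitrary no threshold sequence exists. The only thing to verify carefully is that $D$ is genuinely a discrete subset of $X$, which is immediate because the only accumulation point of $(x_k)$ in $\C^n$ is the origin, which lies outside $X$; there is no real obstacle beyond this bookkeeping, as the preceding proposition does all of the substantive work.
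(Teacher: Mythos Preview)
Your proof is correct and follows essentially the same idea as the paper's: both exploit that a sequence in $X$ converging to the origin in $\C^n$ can never be tame (by the preceding proposition), yet can always be arranged to satisfy any putative threshold condition. The paper phrases this slightly more concisely by taking an arbitrary sequence $v_n\to 0$ and extracting a subsequence meeting the threshold, whereas you fix $\tau$ and build the sequence explicitly; the only cosmetic wrinkle is that your specific choice $x_k=(1/(R_k+1),0,\ldots,0)$ may repeat if the $R_k$ do, but replacing $R_k+1$ by $R_k+k$ (or any strictly increasing modification) resolves this immediately.
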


\begin{proof}
Let $v_n$ by any sequence in $\C^n\setminus\{(0,\ldots,0)\}$ which
converges to the origin in $\C^n$. If there would exist a treshold
sequence, $(v_n)$ would contain a tame subsequence, in contradiction
to the above proposition.
\end{proof}

\begin{corollary}\label{no-finite-union}
There exists discrete subsets of $X$ which can not be realized
as the union of finitely many tame discrete subsets.
\end{corollary}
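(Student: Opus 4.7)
The plan is to build $D$ as a sequence in $X$ that converges, in $\C^n$, to the deleted origin, and then combine this with the preceding proposition in a purely set-theoretic way.

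First I would choose any injective sequence $(v_k)$ in $X=\C^n\setminus\{0\}$ with $v_k\to 0$ in $\C^n$, and set $D=\{v_k:k\in\N\}$. Since $0\notin X$, the set $D$ has no accumulation point in $X$, so $D$ is a legitimate discrete subset of $X$. However $D$ is clearly \emph{not} discrete when viewed inside $\C^n$, because $0$ is a limit point.

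Next, suppose for contradiction that $D=D_1\cup\cdots\cup D_m$ with each $D_i$ a tame discrete subset of $X$. By definition, each $D_i$ is infinite. By the proposition above (tameness in $X$ is equivalent to tameness in $\C^n$, in particular to being discrete in $\C^n$), each $D_i$ is discrete in $\C^n$. On the other hand, $D_i\subset D=\{v_k\}$, and any infinite subset of $\{v_k\}$ must contain a subsequence tending to $0$ (because $v_k\to 0$). Thus $0$ is an accumulation point of $D_i$ in $\C^n$, contradicting discreteness of $D_i$ in $\C^n$.

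There is no real obstacle here: the only thing to notice is that ``tame'' is reserved for infinite sets, so one cannot sneak in finitely many isolated points via finite ``tame'' pieces; once each piece is forced to be infinite, the preceding proposition does the work. The corollary therefore follows from the proposition without any further construction.
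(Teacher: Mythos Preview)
Your proof is correct and follows the same approach as the paper: choose a discrete subset of $X$ that accumulates at the origin in $\C^n$ and invoke the preceding proposition. The paper's own proof is a single line (``Just take any discrete subset of $X$ which in $\C^n$ has an accumulation point in $(0,\ldots,0)$''), and you have simply filled in the routine details; your remark about tameness forcing each $D_i$ to be infinite is not even strictly needed, since a finite union of sets discrete in $\C^n$ is again discrete in $\C^n$, which already contradicts the choice of $D$.
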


\begin{proof}
Just take any discrete subset of $X$
 which in $\C^n$ has an accumulation point
in $(0,\ldots,0)$.
\end{proof}

\section{Some preparation}
\begin{lemma}
Let $G$ be a connected complex Lie group. Then there exists
a surjective holomorphic map from some complex vector space $\C^n$
onto $G$.
\end{lemma}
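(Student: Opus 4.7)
The plan is to reduce via the universal cover to a simply connected complex Lie group, invoke the structural decomposition recalled in the introduction to split off an abelian factor, and then handle the remaining semisimple factor by parameterizing the Bruhat big cell via exponentials.

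Let $\pi\colon\tilde G\to G$ be the universal covering homomorphism; it is holomorphic and surjective, so any surjective holomorphic map $\C^N\to\tilde G$ yields one onto $G$ via composition with $\pi$. I may therefore assume $G$ is simply connected, whence by the structural fact quoted in the introduction $G$ is biholomorphic to $\C^k\times S$ for some $k\ge 0$ and some simply connected complex semisimple Lie group $S$. The identity map of $\C^k$ is surjective, so it suffices to produce a surjective holomorphic map $\C^m\to S$ and take its product with the identity of $\C^k$.

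For this semisimple step, fix a Cartan subalgebra $\mathfrak h\rc\mathfrak s:=\Lie(S)$ with root space decomposition $\mathfrak s=\mathfrak u^-\oplus\mathfrak h\oplus\mathfrak u^+$, and let $U^\pm,H\rc S$ be the corresponding connected subgroups. Define
\[
\phi\colon\C^n\cong\mathfrak u^-\oplus\mathfrak h\oplus\mathfrak u^+\to S,\qquad(x,y,z)\mapsto\exp(x)\exp(y)\exp(z),
\]
where $n=\dimc S$. Since $\exp$ is a biholomorphism on each unipotent piece $\mathfrak u^\pm$ and surjects onto the split algebraic torus $H\simeq(\C^*)^r$, the image of $\phi$ is precisely the Bruhat big cell $\Omega=U^-HU^+$, which is open and dense in $S$.

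Finally consider $\Phi\colon\C^n\times\C^n\to S$, $(u,v)\mapsto\phi(u)\phi(v)$. For any $g\in S$, the open dense subsets $\Omega$ and $g\cdot\Omega^{-1}$ of the connected manifold $S$ must meet; an element $a=gb^{-1}$ in the intersection, with $b\in\Omega$, gives $g=ab\in\Omega\cdot\Omega$, so $\Phi$ is surjective. The main obstacle lies in the semisimple case: the essential classical input is the parameterization of the big cell of a complex semisimple Lie group by $\C^n$ via products of exponentials, which rests on the surjectivity of $\exp$ on split tori together with its holomorphic bijectivity on nilpotent Lie algebras.
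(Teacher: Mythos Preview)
The paper states this lemma without proof, so there is no argument in the paper to compare against. Your proof is correct and self-contained.

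The reduction to the universal cover and then, via the structural fact quoted in the introduction, to a simply connected semisimple factor $S$ is sound. For $S$, your parameterization of the big Bruhat cell $\Omega=U^-HU^+$ by $\C^n$ through products of exponentials is standard: $\exp$ is a biholomorphism on each nilpotent piece $\mathfrak u^\pm$ and surjects $\mathfrak h\cong\C^r$ onto the split torus $H\cong(\C^*)^r$. The final step, that $\Omega\cdot\Omega=S$, is also correct; note that what makes it work is the \emph{density} of $\Omega$ (so that the dense set $\Omega$ meets the nonempty open set $g\Omega^{-1}$), not merely its openness---two nonempty open subsets of a connected group can certainly be disjoint. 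Since the Bruhat big cell is indeed Zariski-open and dense, this is fine.
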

\begin{corollary}\label{prescribe-on-discrete}
Let $G$ be a connected complex Lie group, $X$ a Stein complex manifold
and $D\subset X$ a discrete subset. Then every map $f:D\to G$ extends
to a holomorphic map $F:X\to G$.
\end{corollary}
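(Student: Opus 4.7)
The plan is to reduce the interpolation problem from the (possibly non-Stein, possibly topologically nontrivial) target $G$ to interpolation into $\C^n$, where classical Stein theory applies.

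First I would invoke the preceding lemma to obtain a surjective holomorphic map $\pi: \C^n \to G$. Using the axiom of choice I pick, for each $d \in D$, a point $\tilde f(d) \in \pi^{-1}(f(d)) \subset \C^n$. This produces a set-theoretic map $\tilde f: D \to \C^n$ whose composition with $\pi$ recovers $f$.

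Next I would interpolate $\tilde f$ by a holomorphic map $\tilde F: X \to \C^n$. Working coordinate by coordinate, this reduces to finding, for each of the $n$ components, a holomorphic function on $X$ taking prescribed values on the discrete (hence closed) subset $D$. This is the classical interpolation theorem for Stein manifolds: the ideal sheaf $\mathcal{I}_D$ of $D$ in $\mathcal{O}_X$ is coherent, and $D$ being discrete implies $\mathcal{O}_X/\mathcal{I}_D \simeq \bigoplus_{d\in D} \C_d$ (skyscraper sheaves at the points of $D$). By Cartan's Theorem B, $H^1(X, \mathcal{I}_D) = 0$, so the restriction map $\mathcal{O}(X) \to \prod_{d\in D} \C$ is surjective. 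Applying this to each of the $n$ coordinate-valued maps yields the desired $\tilde F: X \to \C^n$ with $\tilde F(d) = \tilde f(d)$ for every $d \in D$.

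Finally I set $F = \pi \circ \tilde F: X \to G$; this is holomorphic and satisfies $F(d) = \pi(\tilde f(d)) = f(d)$ for all $d \in D$, as required. There is really no obstacle to overcome beyond citing the two ingredients correctly: the preceding lemma that supplies $\pi$, and the standard Stein interpolation statement which is an immediate consequence of Cartan's Theorem B applied to $\mathcal{I}_D$.
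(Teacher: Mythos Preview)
Your argument is correct and follows essentially the same route as the paper's proof: lift $f$ through the surjection $\C^n\to G$ supplied by the preceding lemma, then invoke classical Stein interpolation to extend the lift holomorphically and push back down. The only difference is that you spell out the interpolation step via Cartan's Theorem~B and the ideal sheaf of $D$, whereas the paper simply cites it as a classical fact.
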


\begin{proof}
We fix a surjective holomorphic map $\Phi:\C^n\to G$. Then given
a map $f:D\to G$, 
there is a ``lift'' $g:D\to \C^n$, i.e., a map $g:D\to \C^n$
such that $f=\Phi\circ g$.
The existence of the desired map $F$ now follows from the classical
fact in complex analysis that the value of a holomorphic function
can be described on a fixed discrete set.
\end{proof}

\section{$\pi$-tame sets}

\begin{definition}
Let $X,Y$ be complex manifolds and $\pi:X\to Y$ a holomorphic map.
An infinite discrete subset $D\subset X$ is called {\em $\pi$-tame} 
if there exists an automorphism $\phi$ of $X$ such that
the restriction of the map $\pi\circ\phi$ to $D$ is proper.
\end{definition}

\begin{remark}
A map from a discrete space $D$ into a locally compact space $Y$ is
{\em proper} if and only if it has discrete image and finite fibers.
\end{remark}

\begin{remark} 
If $D$ is $\pi$-tame, and $D'$ is discrete infinite with
$D'\setminus D$ being finite, then $D'$ is $\pi$-tame.
\end{remark}

\begin{proposition}\label{pi-tame-is-tame}
Let $H$ be a non-compact connected complex Lie group and let $\pi:X\to Y$
be a $H$-principal bundle. If $Y$ is Stein, then every $\pi$-tame discrete
subset $D\subset X$ is tame.
\end{proposition}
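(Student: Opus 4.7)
The plan is to exploit the principal $H$-action fiberwise: gauge transformations built from holomorphic maps $Y\to H$ give us enough automorphisms of $X$ to push each point of $D$ arbitrarily far along its fiber, and Steinness of $Y$ lets us interpolate the required values.

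First I would reduce to the situation where $\pi|_D$ is already proper. By $\pi$-tameness there is an automorphism $\phi$ of $X$ so that $\pi\circ\phi|_D$ is proper; replacing $D$ by $\phi(D)$ and the original $\zeta$ by its transport along $\phi$, and composing the eventual automorphism with $\phi$, it suffices to treat the case $\pi|_D$ proper. In that case $E=\pi(D)\subset Y$ is a discrete subset and for each $y\in E$ the fiber intersection $D_y=\pi^{-1}(y)\cap D$ is finite.

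Next I would, for each $y\in E$, choose a single element $h_y\in H$ that simultaneously moves every point of $D_y$ far out in the given exhaustion. Fix $y\in E$ and write $D_y=\{x_1,\ldots,x_{k_y}\}$. Because the principal $H$-action on $X$ is free and proper, each orbit map $h\mapsto h\cdot x_i$ is a proper embedding of $H$ into $X$; hence the preimage of the relatively compact set $\{v\in X:\rho(v)<\zeta(x_i)\}$ under this orbit map is a relatively compact subset $K_i\subset H$. The finite union $K_1\cup\cdots\cup K_{k_y}$ is again relatively compact, so since $H$ is non-compact, there exists $h_y\in H$ outside this union, and then $\rho(h_y\cdot x_i)\ge\zeta(x_i)$ for $i=1,\ldots,k_y$.

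Now I would promote the map $y\mapsto h_y$, defined on the discrete set $E\subset Y$ with values in the connected complex Lie group $H$, to a global holomorphic map $F:Y\to H$: this is exactly corollary~\ref{prescribe-on-discrete}, whose hypotheses are met because $Y$ is Stein. With $F$ in hand, define
\[
\psi:X\to X,\qquad \psi(x)=F(\pi(x))\cdot x,
\]
which is a biholomorphic automorphism of $X$ (the inverse is $x\mapsto F(\pi(x))^{-1}\cdot x$). By construction $\rho(\psi(x))\ge\zeta(x)$ for every $x\in D$, proving tameness.

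The one delicate step is really the middle one, where I need that finitely many properness constraints coming from different points in the same fiber can be satisfied by a single choice of $h_y$; this is where non-compactness of $H$ together with properness of the principal action is essential, and if either assumption were dropped the argument would break.
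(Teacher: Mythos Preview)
Your proof is correct and follows essentially the same route as the paper's: reduce to the case where $\pi|_D$ is proper, pick for each point of the discrete image $\pi(D)$ a single group element $h_y\in H$ that pushes all the finitely many points in that fiber past the prescribed level of $\rho$, interpolate these values to a holomorphic map $F:Y\to H$ via corollary~\ref{prescribe-on-discrete}, and use the resulting gauge automorphism. Your justification of the existence of $h_y$ (via properness of the orbit maps and non-compactness of $H$) is in fact more explicit than the paper's, which simply notes that $\rho$ is unbounded on each fiber; the only cosmetic difference is that you write the principal action on the left while the paper writes it on the right.
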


\begin{proof}
Let $\rho$ be an exhaustion function on $X$.
Assume that $\pi|_D$ is proper. Let $q\in \pi(D)$ and $X_q=\pi^{-1}(q)$.
Observe that $X_q\cap D$ is finite and that $\rho|_{X_q}$ is unbounded.
Hence there exists an element $h_q\in H$ (acting on $X_q$ by the principal
$H$-action of the principal bundle) with
\[
\rho(ph_q)\ge\zeta(p)\ \forall p\in X_q\cap D.
\]
Next we choose an holomorphic map $F:Y\to H$ with $F(q)=h_q$
for all $q\in\pi(D)$. 
(Such a map $F$ exists due to corollary~\ref{prescribe-on-discrete}.)
Now we can define the desired automorphism $\phi$ as the principal action
of $F(q)$ on $X_q$ for each $q\in Y$.
(In other words: $\phi:x\mapsto x\cdot F(\pi(x))$.)
\end{proof}

\begin{proposition}
Let $X\to Y$ be a principal bundle with non-compact structure group $G$.
Assume that there exists a non-constant holomorphic function on $Y$.

Then there exists a tame discrete subset in $X$.
\end{proposition}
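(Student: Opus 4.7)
The plan is to produce $D$ by extracting a divergent sequence in $Y$, lifting it arbitrarily to $X$, and then exploiting the non-compactness of the fibers via gauge transformations $\phi(x)=x\cdot\eta(\pi(x))$ with $\eta\colon Y\to G$ holomorphic. Since $Y$ is not assumed to be Stein, interpolation to construct $\eta$ cannot be performed on $Y$ directly via corollary~\ref{prescribe-on-discrete}; instead it is routed through $\C$, pulled back via the given non-constant holomorphic function. Concretely, first I would upgrade the non-constant $f_0\colon Y\to\C$ to an \emph{unbounded} holomorphic $f$: after replacing $Y$ by a connected component (and $X$ by the corresponding restricted bundle), the open mapping theorem gives that $f_0(Y)$ is open in $\C$. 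If $f_0(Y)=\C$, take $f=f_0$; otherwise pick $c_0\in\partial f_0(Y)\subset\C\setminus f_0(Y)$ and set $f:=1/(f_0-c_0)$, which is holomorphic and unbounded on $Y$. Then choose $q_n\in Y$ with $c_n:=f(q_n)$ satisfying $|c_n|\to\infty$; the sequence $\{c_n\}$ is discrete in $\C$, and $\{q_n\}$ has no accumulation in $Y$ (else $f$ would be finite there). Pick $p_n\in\pi^{-1}(q_n)$ arbitrarily and set $D:=\{p_n:n\in\N\}$, which is discrete in $X$ by continuity of $\pi$.

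To verify tameness, fix any exhaustion $\rho$ of $X$ and any $\zeta\colon D\to\R^+$. Assume $G$ is connected (or pass to the identity component when that is non-compact); each fiber $\pi^{-1}(q_n)$ is then biholomorphic to $G$ and hence non-compact, so $\rho$ restricted to the fiber is unbounded and one can choose $g_n\in G$ with $\rho(p_n\cdot g_n)\ge\zeta(p_n)$. To globalize the assignment $q_n\mapsto g_n$, invoke the lemma to obtain a surjective holomorphic $\Phi\colon\C^N\to G$, lift to points $v_n\in\C^N$ with $\Phi(v_n)=g_n$, and use classical (e.g.~Weierstrass-type) coordinatewise interpolation on $\C$ to build a holomorphic $V\colon\C\to\C^N$ with $V(c_n)=v_n$. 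Setting $\eta:=\Phi\circ V\circ f$ yields $\eta(q_n)=g_n$, and the gauge transformation $\phi(x):=x\cdot\eta(\pi(x))$ is a holomorphic automorphism of $X$ with $\phi(p_n)=p_n\cdot g_n$, so $\rho(\phi(p_n))\ge\zeta(p_n)$ as required.

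The main obstacle is the promotion of the non-constant $f_0$ to an unbounded function, since this is what produces a genuinely discrete target $\{c_n\}\subset\C$ for interpolation; the inversion trick $f_0\mapsto 1/(f_0-c_0)$ is forced precisely by the absence of a Stein hypothesis on $Y$, which prevents direct interpolation on $Y$. A secondary subtlety is the case of a disconnected $G$ with compact identity component and infinite component group, where gauge transformations alone cannot move between components within a fiber and a different argument would be needed.
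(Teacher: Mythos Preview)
Your approach is essentially the paper's: route interpolation through the image of the given holomorphic function rather than through $Y$, then use the gauge transformation $x\mapsto x\cdot\eta(\pi(x))$. The one difference is that your inversion step $f_0\mapsto 1/(f_0-c_0)$ is unnecessary: the paper simply observes that $W=f_0(Y)$ is an open (hence Stein) subset of $\C$, picks any sequence $c_n\in W$ without accumulation point \emph{in $W$}, and interpolates a holomorphic map $F\colon W\to G$ directly on $W$ via the surjection $\C^N\to G$; there is no need for the $c_n$ to diverge in all of $\C$. Your version is correct, just slightly more laborious. Your closing remark about a disconnected $G$ with compact identity component is a genuine edge case, but the paper's proof likewise tacitly assumes $G$ (or at least $G^0$) is a non-compact connected complex Lie group so that the surjection lemma applies.
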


\begin{proof}
Let $f$ be a non-constant holomorphic function on $Y$ and $W=f(Y)$.
Note that $W$ is an open domain in $\C$ and in particular Stein.
Choose a sequence $x_n$ in $X$ such that $f(\pi(x_n))$ is without
accumulation points 
in $W$. Assume that $\rho$ is an exhaustion function on $X$ and let
$\zeta:D\to\R^+$ be a given map where $D=\{x_n:n\in\N\}$.
Since $G$ is non-compact and acting with closed orbits, we can find
elements $g_n\in G$ such that $\rho(x_ng_n)>\zeta(x_n)$.
Recall that for every connected complex Lie group $G$ there exists
a surjective holomorphic map $h:\C^N\to G$. Using this and the fact
that $f(\pi(D))$ is discrete in $W$, it is clear that there
exists a holomorphic map $F:W\to G$ with $F(f(\tau(x_n)))=g_n$ for all
$n\in\N$. Now $\phi:x\mapsto x\cdot F(f(\tau(x)))$ defines an
automorphism of $X$ such that $\rho(\phi(x))>\zeta(x)$ for all
$x\in D$. Since $\zeta:D\to\R^+$ was arbitrary, we may conclude
that $D$ is tame.
\end{proof}

\section{Generic projections}

\subsection{Generalities}
\begin{proposition}\label{prop-LA}
Let $V$, $W$ be finite-dimensional (real) Hilbert spaces, $V\ne\{0\}$, 
and  let $\pi:V\to W$ be a linear map.
Let $K$ be a connected compact Lie group which acts linearly
and orthogonally on $V$.
We assume that $\ker \pi$ contains no non-trivial $K$-invariant
vector subspace. 

Let $\mu$ denote the Haar measure on $K$
(normalized, i.e., $\mu(K)=1$).

Then for every $r,\delta>0$ there exists a number $R>0$ such that
\[
\mu\{k\in K: ||\pi(k(v))||<r\}<\delta
\]
for all $v\in V$ with $||v||>R$.
\end{proposition}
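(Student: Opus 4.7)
The plan is to exploit linearity to reduce the statement to a uniform limit on the unit sphere, prove that the relevant measure vanishes pointwise via real analyticity together with the hypothesis, and then upgrade to uniformity by compactness.

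First I use homogeneity. Since $\pi$ and every $k\in K$ are linear, for $v\ne 0$ with $\hat v:=v/\|v\|$ one has $\pi(k(v))=\|v\|\,\pi(k(\hat v))$, so
\[
\{k\in K:\|\pi(k(v))\|<r\}=\{k\in K:\|\pi(k(\hat v))\|<r/\|v\|\}.
\]
It thus suffices to establish that for every $\delta>0$ there is $\epsilon_0>0$ with
\[
\sup_{u\in S}\mu\{k\in K:\|\pi(k(u))\|<\epsilon\}<\delta\qquad\text{for all }0<\epsilon\le\epsilon_0,
\]
where $S$ denotes the unit sphere of $V$; then taking $R=r/\epsilon_0$ gives the proposition.

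Next, I would prove the pointwise fact: for every fixed $u\in S$ the set $M_u^{0}:=\{k\in K:\pi(k(u))=0\}$ has $\mu$-measure zero. The orbit $K\cdot u$ is a compact connected real analytic submanifold of $V$, diffeomorphic to $K/K_u$, and under the orbit map $k\mapsto k\cdot u$ the Haar measure $\mu$ pushes forward to the normalized $K$-invariant measure on $K\cdot u$. Since $K\cdot u$ is connected and $\pi$ is real analytic, $K\cdot u\cap\ker\pi$ either has measure zero in $K\cdot u$ or equals all of $K\cdot u$. In the latter case $\mathrm{span}_{\R}(K\cdot u)\subset\ker\pi$ is a $K$-invariant subspace, nontrivial because $u\ne 0$, contradicting the hypothesis on $\ker\pi$.

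Finally, I would deduce uniformity by contradiction. Suppose there exist $\delta_0>0$, $\epsilon_n\to 0^+$ and $u_n\in S$ with $\mu(M_n)\ge\delta_0$, where $M_n:=\{k:\|\pi(k(u_n))\|<\epsilon_n\}$; by compactness of $S$, after passing to a subsequence $u_n\to u_*\in S$. Since $\mu(K)=1<\infty$, the reverse Fatou lemma gives $\mu(\limsup_n M_n)\ge\limsup_n\mu(M_n)\ge\delta_0$. Any $k\in\limsup_n M_n$ lies in $M_{n_j}$ for some subsequence $n_j\to\infty$, so $\|\pi(k(u_{n_j}))\|<\epsilon_{n_j}\to 0$ and continuity yields $\pi(k(u_*))=0$, i.e.\ $k\in M_{u_*}^{0}$. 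Hence $\mu(M_{u_*}^{0})\ge\delta_0>0$, contradicting the previous paragraph. The principal obstacle is the pointwise step: upgrading ``positive-measure intersection of the orbit with $\ker\pi$'' to ``a nontrivial $K$-invariant subspace sitting inside $\ker\pi$''; this uses both connectedness of $K\cdot u$ and the identity theorem for real analytic functions on connected real analytic manifolds.
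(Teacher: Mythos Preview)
Your proof is correct and follows the same overall strategy as the paper: reduce to the unit sphere by homogeneity, use real analyticity together with the hypothesis to show that $\{k:\pi(ku)=0\}$ has Haar measure zero for each fixed $u\in S$, and then upgrade to uniformity via compactness of $S$. The only notable difference is in the uniformity step: the paper first normalizes so that $\|\pi(v)\|\le\|v\|$, derives the Lipschitz-type bound $h(v,r)\le h(w,r+\|v-w\|)$ for $h(v,r)=\mu\{k:\|\pi(kv)\|<r\}$, and then studies $g(r)=\sup_{v\in S}h(v,r)$ as $r\to 0$; you instead pass to a convergent subsequence $u_n\to u_*$ and apply reverse Fatou to $\limsup_n M_n$, which is a bit slicker and avoids the preliminary normalization of $\pi$.
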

\begin{proof}
There is no loss in generality in assuming
$||\pi(v)||\le||v||$ for all
$v\in V$.

Let $S=\{v\in V:||v||=1\}$. Note that $S$ is compact.
We define an auxiliary function 
$f:K\times S\to\R$ as
\[
f(k,v)=||\pi(k\cdot v)||.
\]
Since $K$ does not stabilize any non-trivial vector subspace of
$\ker\pi$, it is clear that 
\[
\forall v\in S:\exists k\in K: f(k,v)>0.
\]
Thus $\{(k,v):f(k,v)=0\}$ is a real-analytic subset of $K\times S$ not 
containing any of the fibers of the projection $pr_2:K\times S\to S$.
Consequently
\begin{equation}\label{nullset}
\mu\left(\{k\in K:f(k,v)=0\}\right)=0
\end{equation}
for every $v\in S$. (Here we use the fact that nowhere dense real analytic
subsets of $K$ are of Haar measure zero.)

Next we define
\[
\Omega(v,r)=\{k\in K:f(k,v)<r\}
\]
and
\[
h(v,r)=\mu\left(\Omega(v,r)\right)
\]
for $(v,r)\in S\times]0,1]$.
Evidently $r\mapsto h(v,r)$ is a monotonously
increasing function for every fixed $v$.
 
On the other hand  $||\pi(v)||\le||v||\ \forall v\in V$ implies
\[
||\pi(kv)-\pi(kw)||\le||v-w||\ \forall k\in K,v,w\in V
\]

Thus $|f(k,v)-f(k,w)|\le||v-w||$ for all $k\in K$, $v,w\in S$.
This in turn implies
\[
\Omega(v,r)\subset \Omega(w,r+\epsilon)
\]
for $v,w\in S$ with $||v-w||<\epsilon$. 
From this it follows that
\begin{equation}\label{eqh}
h(v,r)\le h(w,r+||v-w||)\ \forall v,w,r.
\end{equation}

Next we define another auxiliary function
$g:]0,1]\to[0,1]$:
\[
r\mapsto g(r)=\sup_{v\in S}h(v,r).
\]

We claim that $\lim_{r\to 0} g(r)=0$.

First we note that the limit $\lim_{r\to 0}g(r)$ exists, because $g$
is mononously increasing and bounded from below: $g\ge 0$.

Define $c=\lim_{r\to 0}g(r)$. Then there exists a sequence $r_n$ in $]0,1]$
with $\lim_{n\to\infty}r_n=0$ and $g(r_n)\ge c\ \forall n$.

Now assume $c>0$. Then the definition of $g$ implies the existence
of a sequence $v_n$ in $S$ with $h(v_n,r_n)>c/2$.
Since $S$ is compact, we may assume that $v_n$ is convergent:
$\lim_{n\to\infty} v_n=v$.

Due to inequality~\ref{eqh} we may deduce
\[
h(v,r_n+||v_n-v||)\ge h(v_n,r_n)>c/2.
\]
Since $\lim_{n\to\infty} (r_n+||v_n-v||)=0$, $\sigma$-additivity of the Haar measure
now
implies
\[
\mu\{k\in K: f(kv)=0\}=\mu\left(\cap_n\{k:f(kv)<r_n+||v_n-v||\}\right)\ge c/2
>0
\]
contradicting equation~\ref{nullset}.

Therefore the claim must hold, i.e., $\lim_{r\to 0}g(r)=0$.

Now we can prove the statement of the proposition:
Given $r,\delta>0$, we choose $\rho\in]0,1]$ such that
$g(\rho)<\delta$. Then we define $R=r/\rho$.

Let $v\in V$ with $||v||>R$ and define $v_0=\frac{v}{||v||}\in S$.
We observe that
$||\pi(k(v))||<r$ is equivalent to $||\pi(k(v_0))||<\frac{r}{||v||}$.

Now $g(\rho)<\delta$ implies that $h(w,\rho)<\delta$ for all $w\in S$.
Thus $h(v_0,\rho)<\delta$, i.e., 
\[
\mu\{k\in K: ||\pi(k(v_0))||<\rho\}<\delta
\]
which is equivalent to
\[
\mu\{k\in K: ||\pi(k(v))||<\rho ||v||\}<\delta.
\]
This completes the proof, since $\rho||v||>\rho R=r$.
\end{proof}

\begin{remark}
Instead of a compact real Lie group $K$ with its Haar measure $\mu$
we may take an arbitrary connected real Lie group with an arbitrary
probability measure of Lebesgue measure class.
\end{remark}

\begin{proposition}\label{prop-Rn}
Let $K$ be a Lie group acting on a manifold $S$ and 
let $\pi:S\to Y$ be a continuous map. Let $\rho:S\to\R^+$
be an exhaustion function and let $\tau:Y\to\R^+$ be an
arbitrary continuous function.

Assume that $K$ is endowed with a probability measure $\mu$
such that the following property holds:

{\em ``$(*)$ For every $r,\delta>0$ there exists a number $R>0$ such that
\[
\mu\{k\in K:\tau(\pi(kx))<r\}<\delta
\]
for all $x\in S$ with $\rho(x)>R$.''}

Then there exists a sequence $R_n>0$ such that for every sequence
$x_n$ in $S$ with $\rho(x_n)>R_n\ \forall n$ we can find an element
$k\in K$ such that $\tau(\pi(kx_n))\ge n$ for all $n\in\N$.
\end{proposition}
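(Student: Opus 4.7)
The plan is to exploit property $(*)$ together with a standard Borel--Cantelli type argument in the probability space $(K,\mu)$. The point is that once we are allowed to choose a summable sequence of tolerances $\delta_n$, we can make the total measure of all ``bad'' events strictly less than $\mu(K)=1$, leaving some $k\in K$ that simultaneously avoids all of them.

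Concretely, I would first fix a summable sequence of positive reals $\delta_n$ with $\sum_{n=1}^{\infty}\delta_n<1$, for instance $\delta_n=2^{-n-1}$. Applying hypothesis $(*)$ to the pair $(r,\delta)=(n,\delta_n)$ produces a number $R_n>0$ with the property that, whenever $x\in S$ satisfies $\rho(x)>R_n$, one has
\[
\mu\{k\in K:\tau(\pi(kx))<n\}<\delta_n.
\]
This is the sequence $R_n$ the proposition asks for.

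Next, given any sequence $(x_n)$ in $S$ with $\rho(x_n)>R_n$ for all $n$, I would introduce the exceptional sets
\[
B_n=\{k\in K:\tau(\pi(kx_n))<n\},
\]
each of which has $\mu(B_n)<\delta_n$. Then
\[
\mu\Bigl(\bigcup_{n\in\N}B_n\Bigr)\le\sum_{n=1}^{\infty}\mu(B_n)<\sum_{n=1}^{\infty}\delta_n<1=\mu(K),
\]
so the complement $K\setminus\bigcup_n B_n$ has positive measure and is in particular nonempty. Any element $k$ of this complement satisfies $\tau(\pi(kx_n))\ge n$ for every $n\in\N$, which is exactly the required conclusion.

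There really is no hard step here: the content is entirely absorbed into hypothesis $(*)$ (which in applications comes from proposition~\ref{prop-LA}), and the remainder is a one-line measure estimate. The only subtlety worth flagging is the choice of $\delta_n$ summable to strictly less than $\mu(K)=1$; any other summable sequence would work after rescaling, but $\sum\delta_n<1$ is what makes the union bound produce a nonempty, rather than merely conull-in-expectation, set of admissible $k$.
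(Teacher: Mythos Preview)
Your proof is correct and is essentially identical to the paper's own argument: the paper also applies $(*)$ with $(r,\delta)=(n,2^{-(n+1)})$ to produce the $R_n$, then uses the union bound $\sum_n 2^{-(n+1)}=\tfrac12<1$ to conclude that the set of good $k$ has positive measure and is therefore nonempty.
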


\begin{proof}
Using $(*)$ we may (for every $n\in\N$) choose $R_n>0$ such that
\[
\mu\{k\in K:\tau(\pi(kx_n))<n\}<2^{-(n+1)}.
\]
forall $x_n$ with $\rho(x_n)>R_n$.

Since $\sum_{n\in\N} 2^{-(n+1)}=\frac 12<1$, the
set
\[
\{k\in K:\tau(\pi(kx))\ge n\forall n\in\N\}
\]
has positive measure (and is therefore non-empty)
for every sequence $x_n$ in $S$ with $\rho(x_n)>R_n$.
\end{proof}

\begin{corollary}
Choose $R_n$ as above and let $D=\{x_n:n\in\N\}$ be discrete
subset of $S$ for which $\rho(x_n)>R_n$ for all $n\in\N$.

Let $\Omega$ denote the set of all $k\in K$ for which $\pi$
restricts to a proper map on $k(D)$, i.e., for which $\pi|_{k(D)}$
has finite fibers and maps to a discrete subset of $Y$.

Then $K\setminus\Omega$ has measure zero.
\end{corollary}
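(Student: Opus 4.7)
The plan is to apply the Borel--Cantelli lemma to measure sets built directly from the construction in the proof of Proposition~\ref{prop-Rn}. Concretely, I would set
\[
A_n=\{k\in K:\tau(\pi(kx_n))<n\},
\]
and observe that because the numbers $R_n$ were chosen so that $\mu\{k:\tau(\pi(kx))<n\}<2^{-(n+1)}$ whenever $\rho(x)>R_n$, the hypothesis $\rho(x_n)>R_n$ gives $\mu(A_n)<2^{-(n+1)}$ directly.

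Since $\sum_n\mu(A_n)\le 1/2<\infty$, Borel--Cantelli yields $\mu(\limsup_n A_n)=0$. For any $k$ outside $\limsup_n A_n$ there is an $N$ such that $\tau(\pi(kx_n))\ge n$ for all $n\ge N$, and in particular $\tau(\pi(kx_n))\to\infty$. Reading $\tau$ as an exhaustion on $Y$, so that sublevel sets of $\tau$ are relatively compact, this means every compact $L\subset Y$ contains only finitely many of the points $\pi(kx_n)$. Invoking the remark that a map from the discrete space $D$ to $Y$ is proper exactly when it has discrete image and finite fibers, this is precisely the statement that $\pi|_{k(D)}$ is proper. Hence $K\setminus\Omega\subset\limsup_n A_n$ and is therefore a $\mu$-null set.

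The only step requiring any thought is the passage from $\tau(\pi(kx_n))\to\infty$ to properness of $\pi|_{k(D)}$; this goes through as long as $\tau$ is an exhaustion on $Y$, which is the natural reading in the context of Proposition~\ref{prop-Rn}. If one only wishes to assume $\tau$ continuous, the same argument still shows that $\{n:\tau(\pi(kx_n))\le M\}$ is finite for every $M$, which is the analogous weaker conclusion. Apart from this, the proof is a one-line application of Borel--Cantelli, the key point being that the summability $\sum 2^{-(n+1)}<\infty$ is already packaged into the choice of $R_n$.
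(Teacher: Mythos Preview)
Your argument is correct and is essentially the paper's own proof: the paper defines $\Omega_N=\{k:\tau(\pi(kx_n))\ge n\ \forall n\ge N\}$ and shows $\mu(K\setminus\Omega_N)\to 0$, which is exactly the Borel--Cantelli computation you carry out, since $\limsup_n A_n=\bigcap_N(K\setminus\Omega_N)$. If anything you are slightly more careful than the paper in flagging that the passage from $\tau(\pi(kx_n))\to\infty$ to properness of $\pi|_{k(D)}$ uses that $\tau$ is an exhaustion on $Y$; the paper glosses over this (it holds in the intended application, where $\tau$ is a norm on a vector space).
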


\begin{proof}
By construction
\[
\mu\{k\in K:\tau(\pi(kx_n))<n\}<2^{-(n+1)}.
\]
Let $\Omega_N$ (for a given $N\in\N$) denote the set
\[
\{k\in K:\tau(\pi(kx_n))\ge n\ \forall n\ge N\}
\]
We observe that $\pi$ restricted to $k(D)$ is proper for
any $k$ for which there exists a number $N$ such that $k\in\Omega_N$.
By construction
\[
\mu(K\setminus\Omega_N)\le 2^{-(N+1)}
\]
This implies that
\[
K\setminus\Omega\subset \cap_N\left(K\setminus \Omega_N\right)
\]
is a set of measure zero, 
because $\mu(K\setminus\Omega)\le 2^{-(N+1)}\ \forall N\in\N$.
\end{proof}
\subsection{Generic projections on semisimple complex Lie groups}
\begin{proposition}\label{embed-S}
Let $S$ be an semisimple complex Lie group,
$T\simeq(\C^*)^d\subset S$ and let $K$ be a maximal compact subgroup
of $S$.

Let $\pi$ denote the projection $\pi:S\to S/T$.

Then there are finitely many regular functions $f_i$ on $S/T$
such that
\begin{enumerate}
\item
The map $(f_1,\ldots,f_s):S/T\to\C^s=W$ is a proper embedding.
\item
All functions of the form $x\mapsto f_i(\pi(xk))$ ($k\in K$,
$i\in I=\{1,\ldots, s\}$) generate a finite-dimensional vector
subspace $V$ of $\C[S]$.
\item
The natural map from $S$ to $V^*$ which associates to each point
$p\in S$ the evaluation homomorphism $f\mapsto f(p)$
defines a proper finite morphism from $S$ to $V^*$.
\item
The kernel of the natural map $V^*\to W^*$ contains no 
non-trivial
$K$-invariant vector subspace.
\end{enumerate}
\end{proposition}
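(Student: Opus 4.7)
The plan is to construct the $f_i$ as matrix coefficients of finitely many irreducible rational representations of $S$ that have non-trivial $T$-fixed subspace, augmented by the irreducible summands of the adjoint representation. The pay-off is that the right-$K$-span $V$ then becomes a finite-dimensional right-$S$-submodule of $\C[S]$ by construction, and conditions (i)--(iv) translate into standard algebraic facts together with the Zariski-density of $K$ in $S$.

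Since $T\simeq(\C^*)^d$ is reductive and $S$ is semisimple, Matsushima's theorem gives that $S/T$ is an affine variety, so $\C[S/T]$ is a finitely generated $\C$-algebra. The algebraic Peter--Weyl theorem decomposes $\C[S]=\bigoplus_\rho V_\rho^*\otimes V_\rho$ over the irreducible rational representations $\rho$ of $S$, and taking right-$T$-invariants gives $\C[S/T]=\bigoplus_\rho V_\rho^*\otimes V_\rho^T$. I would therefore choose finitely many irreducible representations $\rho_1,\dots,\rho_m$ with $V_{\rho_j}^T\ne\{0\}$ and matrix coefficients $f_i=c_{v_{j,a},\lambda_{j,b}}$ (with $v_{j,a}$ running through a basis of $V_{\rho_j}^T$ and $\lambda_{j,b}$ through a basis of $V_{\rho_j}^*$) such that the $f_i$ generate $\C[S/T]$ as a $\C$-algebra; finite generation makes finitely many summands suffice. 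I would additionally insist that each irreducible summand of the adjoint representation of $S$ occur among the $\rho_j$: this is costless, since $\mathrm{Lie}(T)\subset V_{\mathrm{Ad}}^T$ guarantees a non-zero $T$-fixed subspace while the kernel of $\mathrm{Ad}$ is the finite center of $S$.

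Condition (i) is then automatic: $f_1,\dots,f_s$ generate $\C[S/T]$, so $(f_1,\dots,f_s)\colon S/T\to W=\C^s$ is a closed, hence proper, embedding. For (ii), right translation acts on matrix coefficients by $R_g\,c_{v,\lambda}=c_{\rho(g)v,\lambda}$, so each $f_i\circ\pi$ lies in the finite-dimensional right-$S$-submodule $M_{\rho_j}\cong V_{\rho_j}\otimes V_{\rho_j}^*\subset\C[S]$; Zariski-density of $K$ in $S$ identifies the right-$K$-span with the right-$S$-span, and irreducibility of the $\rho_j$ together with $V_{\rho_j}^T\ne\{0\}$ gives $V=\bigoplus_jM_{\rho_j}$, which is finite-dimensional. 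For (iii), the subalgebra of $\C[S]$ generated by $V$ is the one generated by the matrix coefficients of $\rho:=\bigoplus_j\rho_j$. Since $\rho$ is non-trivial on every simple factor of $S$ (adjoint summands were included), its kernel $N$ is a finite central subgroup, and $\rho$ descends to a faithful representation of the semisimple group $S/N$. For semisimple algebraic groups, the matrix coefficients of any faithful representation generate the full coordinate ring (the image in $\mathrm{GL}(V_\rho)$ is a closed subgroup and $\det\rho\equiv 1$ since a semisimple group has no non-trivial characters), so the subalgebra generated by $V$ is exactly $\C[S/N]$; the finite \'etale cover $S\to S/N$ makes $\C[S]$ integral over $\C[S/N]$, and the evaluation morphism $S\to V^*$ is therefore finite and proper. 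Finally, (iv) is immediate from the construction: the natural map $V^*\to W^*$ is dual to the inclusion $W\hookrightarrow V$, $(c_1,\dots,c_s)\mapsto\sum_i c_i(f_i\circ\pi)$, and its kernel is the annihilator of $\{f_i\circ\pi\}$ in $V^*$; any $K$-invariant subspace $U$ in this kernel annihilates the entire $K$-orbit of each $f_i\circ\pi$, hence all of $V$, forcing $U=\{0\}$.

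The main obstacle is meeting (i) and (iii) at the same time: the matrix coefficients must come from $T$-fixed source vectors (so that the $f_i$ descend to $S/T$) and must generate $\C[S/T]$, yet their $K$-translates also have to capture enough of $\C[S]$ to pin $S$ down up to a finite covering. Including the adjoint representation is precisely the device that reconciles the two demands, because it always contributes the non-zero $T$-fixed subspace $\mathrm{Lie}(T)$ on every simple factor while its kernel is central and finite, making the finiteness in (iii) automatic.
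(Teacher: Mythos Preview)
Your proof is correct but follows a genuinely different route from the paper's.

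The paper picks \emph{arbitrary} algebra generators $f_i$ of $\C[S/T]$ (using only that $S/T$ is affine) and then proves (iii) by a direct geometric argument: the fibers of $S\to V^*$ are shown to be cosets of $Z=\bigcap_{k\in K}kTk^{-1}$, which is normalized by $K$ and hence (by Zariski density) normal in $S$, so finite; properness is handled by showing that $T$-orbits in the fibers of $V^*\to W^*$ are closed, reducing via toric-variety theory to one-parameter subgroups $T_0\simeq\C^*$, each of which sits in an $SL_2(\C)$-subgroup whose Weyl element (lying in $K$) inverts $T_0$ and would force a non-closed orbit to compactify to a copy of $\P_1$ inside the affine space $V^*$---a contradiction.

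You instead front-load the work into a careful representation-theoretic choice of the $f_i$: matrix coefficients with $T$-fixed source vectors, \emph{all} dual vectors, and the adjoint summands forced in. This makes $V$ explicitly equal to a direct sum of full matrix-coefficient blocks $\bigoplus_j M_{\rho_j}$, so that the subalgebra generated by $V$ is visibly $\C[S/N]$ for the finite central $N=\ker\bigl(\bigoplus_j\rho_j\bigr)$, and (iii) follows from the closed-embedding theorem for faithful representations of semisimple groups together with the finite isogeny $S\to S/N$. Your argument is more structural and avoids the orbit-closure analysis entirely, at the cost of invoking algebraic Peter--Weyl and the closed-image theorem; the paper's argument uses lighter machinery but a more delicate hands-on properness step. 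The treatments of (ii) and (iv) are essentially the same in both.
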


\begin{proof}
First we  note that $Y=S/T$ is an affine variety, because both $S$ and
$T$ are reductive. This yields property $(i)$.

Now we define $V$ as in $(ii)$. Finite-dimensionality follows from
standard results on transformation groups.

We obtain a natural map from $S$ to $V^*$ which associates to each point
$p\in S$ its evaluation map $f\mapsto f(p)$.

We have to show $(iii)$.
As a preparation we consider $Z=\cap_{k\in K} k(T)k^{-1}$.
This is an algebraic subgroup of $S$,
obviously normalized by $K$ and therefore normal in $S$.
Since $S$ is semisimple,
$T\simeq(\C^*)^d$ can not be normal in $S$.
Hence $\dim(Z)=0$
and consequently $Z$ is finite.

Let $x,y\in S$. If $f(\pi(xk))=f(\pi(yk))$ for all $f\in W$, $k\in K$,
then necessarily $\pi(xK)=\pi(yk)$ for all $k\in K$.
This is equivalent to $y^{-1}x\in kTk^{-1}$.
It follows that $F(x)=F(y)$ for all $F\in V$ iff $y^{-1}x
\in Z$.
Thus $S\to V^*$ is induced by an injective map from $S/Z$ to $V^*$.

We still have to check properness.

We recall that the base $S/T$ is properly embedded into $W^*$.
Choose a point $\lambda\in W^*$ which is in the image of $S/T$, i.e.,
is the image of some $sT$
and consider its preimage in $V^*$.

Taking note that  all maps between 
$S$, $S/T$, $V^*$ and $W^*$ are $S$-equivariant,
we have to show that $T$-orbits in the fiber in $V^*$ over $\lambda\in W^*$
are closed.

Using theory of toric varieties, we know that if a $T$-orbit in $V^*$ is
not closed, there is a subgroup $\C^*\simeq T_0\subset T$ with a non-closed
$T_0$-orbit.

However, each such $T_0$ is contained in some $SL_2(\C)\simeq H\subset S$.
Hence there exists an element $k\in K$ such that conjugation by $k$
induces the automorphism $z\mapsto 1/z$ of $\C^*\simeq T_0$. It follows
that the closure of a non-trivial $T_0$-orbit in $V^*$ is either the orbit 
itself or isomorphic to $\P_1$. Since $\P_1(\C)$ as a compact Riemann surface
can not be embedded in a complex vector space like $V^*$, it follows
that $T_0$-orbits are closed. Therefore $T$-orbits are closed and
consequently the image of $S$ in $V^*$ is closed.
This verifies $(iii)$.

Finally we have to show $(iv)$. Assume that $C$ is such a 
$K$-invariant vector subspace of the kernel of the map $V^*\to W^*$.

The linear form on $V$ which is defined by an element $c\in C$ must
vanish on $W$, because $C$ is contained in the kernel.
But $K$-invariance of $C$ implies that $c$ must in fact vanish
on the smalles vector subspace of $V$ which contains $W$ and is
$K$-invariant. However, this is $V$ itself, hence $C=\{0\}$.
\end{proof}

\begin{lemma}\label{away-center}
Let $S$ be a complex semisimple Lie group and $Z$ its center.
Let $\tau:S\to S/Z$ denote the natural projection.
Let $D\subset S$ be a tame discrete subset.
Let $R_n$ be a sequence of positive real numbers.

Then  there exists a biholomorphic automorphism $\phi$  of $S$ 
such that 
\begin{enumerate}
\item
$\tau$ is injective on $\phi(D)$.
\item
The condition $\#\{x\in D: \rho(\phi(x))\le R_n\}\le n$
holds for all $n\in\N$.
\end{enumerate}
\end{lemma}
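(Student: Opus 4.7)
The plan is in two parts: first to achieve condition (ii) by a single application of tameness, and then to refine the construction to also secure (i).

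\emph{Achieving (ii).} Enumerate $D=\{x_n:n\in\N\}$ and set $R'_n=\max\{R_1,\ldots,R_n\}$, which is nondecreasing. By the tameness of $D$, applied with the target function $\zeta_0(x_n)=R'_n+1$, there is $\phi_0\in\Auto(S)$ with $\rho(\phi_0(x_n))>R'_n$ for every $n$. If $\rho(\phi_0(x_n))\le R_m$ for some $m$, then $R'_n<R_m\le R'_m$, which forces $n<m$; hence $\#\{n:\rho(\phi_0(x_n))\le R_m\}\le m-1$, establishing (ii).

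\emph{Achieving (i).} Since $S$ is semisimple, the center $Z$ is finite, so for each pair $(n,m)$ with $n\ne m$ only finitely many forbidden identities $\phi(x_n)=\phi(x_m)z$ (with $z\in Z\setminus\{e\}$) must be excluded. My plan is to construct $\phi$ as a limit $\phi=\lim\phi_k$ in $\Auto(S)$ (in the compact-open topology), starting from the $\phi_0$ of the previous step and producing $\phi_k=\alpha_k\circ\phi_{k-1}$ by post-composing with a localized automorphism $\alpha_k$ that nudges $\phi_{k-1}(x_k)$ off the finite bad set
\[
\bigcup_{j<k}\phi_{k-1}(x_j)\,(Z\setminus\{e\})
\]
while leaving $\phi_{k-1}(x_j)$ for $j<k$ essentially in place. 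Such small, localized automorphisms of a semisimple complex Lie group are supplied by the rich supply of holomorphic automorphisms of $S$ that also underlies Propositions~\ref{pi-tame-is-tame} and~\ref{gen-proj}.

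\emph{Main obstacle.} The delicate point is controlling the corrections $\alpha_k$ carefully enough that the sequence $\phi_k$ converges to an element of $\Auto(S)$, and that the growth estimate of the first step survives the passage to the limit with sufficient cushion to preserve (ii). Because $Z$ is finite, only finitely many new conditions must be enforced at each step, so each $\alpha_k$ can be chosen inside a prescribed small neighborhood of the identity (e.g.\ supported so as to move points of $\phi_{k-1}(\{x_1,\ldots,x_{k-1}\})$ by less than $2^{-k}$ in a fixed metric); this ensures simultaneously the convergence of the $\phi_k$ and the preservation of the growth inequality needed for (ii). The resulting $\phi$ then satisfies both (i) and (ii).
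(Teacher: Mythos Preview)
Your first paragraph (using tameness once to secure (ii)) is fine and matches what the paper does implicitly. The gap is in the second step. You only constrain $\alpha_k$ on the finite set $\phi_{k-1}(\{x_1,\ldots,x_{k-1}\})$; nothing controls what $\alpha_k$ does to $\phi_{k-1}(x_j)$ for $j>k$. Holomorphic automorphisms of $S$ are global, rigid objects---there are no ``compactly supported'' ones---so being $2^{-k}$-small on $k-1$ prescribed points says nothing about the rest of $D$. A single $\alpha_k$ could therefore drag infinitely many later points into $\{\rho\le R_m\}$ and destroy (ii). The reference to Propositions~\ref{pi-tame-is-tame} and~\ref{gen-proj} does not help: those propositions produce automorphisms moving specified points \emph{far}, not automorphisms that are uniformly close to the identity. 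You also have not argued that $\lim\phi_k$ lands in $\Auto(S)$ rather than merely in the space of holomorphic self-maps; that requires an Anders\'en--Lempert type argument you have not set up.

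The paper sidesteps the limit construction entirely. It fixes a unipotent subgroup $U\subset S$ and works with the \emph{finite-dimensional} family of shears $\phi_F:g\mapsto g\cdot F(\pi(g))$, $F$ ranging over a space $V$ of functions on $S/U$, restricted to a bounded ball $B\subset V$. Because $\bar B$ is compact, one can inflate the thresholds once, to
\[
\tilde R_n=\max_{F\in\bar B,\ \rho(x)\le R_n}\rho(\phi_{-F}(x)),
\]
use tameness to push $D$ past the $\tilde R_n$, and then know that \emph{every} $\phi_F$ with $F\in B$ preserves (ii) for \emph{all} points simultaneously---this is exactly the uniform control your scheme lacks. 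For (i), each injectivity condition $(\phi_F(g))^{-1}\phi_F(h)\notin Z$ cuts out a dense open subset $\Omega_{g,h}\subset V$ (here the finiteness of $Z$ is used), and Baire category yields a single $F\in B\cap\bigcap_{g,h}\Omega_{g,h}$. One shear $\phi_F$, composed with the preliminary tameness move, then gives both (i) and (ii).
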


\begin{proof}
First note that $\tau(g)=\tau(h)$ if and only if $g^{-1}h\in Z$.

We choose a (positive-dimensional) unipotent subgroup $U$ of $S$.
Let $\pi:S\to S/U$=Y be the natural projection.
The quotient $S/U$ is a quasi-affine variety, because $U$ is unipotent.
In particular, $Y=S/U$ admits an injective holomorphic map
into some $\C^m$. As a consequence, there is a finite-dimensional
complex vector space $V$ of holomorphic functions on $Y$
separating the points. Let $B$ be the unit ball in $V$ for some
(arbitrary) norm on $V$. Note that $B$ is relatively compact and open.

To each $F\in V$ we may associate an automorphism $\phi_F$ of
$S$ via $g\mapsto g\cdot(F(\pi(g))$.
(The analoguous automorphisms for $\C^n$ are known as ``shears''.)
Observe that $\phi_{-F}\circ\phi_F=id_S$. This confirms that
$\phi_F$ is indeed an automorphism.

Since $\rho$ is an exhaustion function, $K_r=\{x\in S:\rho(x)\le r$
is compact for all $R>0$. As a consequence, 
we may define
\[
\tilde R_n=\max_{F\in\bar B,\rho(x)\le R_n}\rho(\phi_{-F}(x))
\]
Observe that $\rho(p)>\tilde R_n$ implies that $\rho(\phi_F(p))>R_n$
for all $F\in B$.

For every $g,h\in D$ we define a subset $\Omega_{g,h}\subset V$ as
follows:
\[
\Omega_{g,h}=\{ F\in V: \left(\phi_F(g)\right)^{-1}\phi_F(h)\not\in Z\}
\]
The condition
\[
\left(\phi_F(g)\right)^{-1}\phi_F(h) \not\in Z
\]
is equivalent to
\[
g^{-1}h\not\in F(g)ZF(h)
\]
Since $Z$ is finite and the functions $F$ separate the points on $S/U$,
it is clear that $\Omega_{g,h}$ is a dense open subset of $V$.
As a Fr\'echet space $V$ has the Baire property.
It follows that
\[
B\cap \left( \cap_{g\in D}\cap_{h\in D}\Omega_{g,h} \right)
\]
is again dense and in particular not empty.
This implies the statement.
\end{proof}

\begin{proposition}\label{gen-proj}
Let $S$ be a connected complex semisimple Lie group
with center $Z$ and  an exhaustion function $\rho:S\to\R^+$.
Let $K$ be a maximal compact subgroup with Haar measure $\mu$.
Let $T$ be a ``maximal torus'' of $S$ in the sense of algebraic groups,
i.e., a maximal connected reductive commutative complex Lie subgroup
of $S$.

For every discrete subset $D\subset S$ let $\Omega_D$ denote
the set of all $k\in K$ for which the following holds:
\begin{enumerate}
\item
The natural projection map $\pi_k$
from $S$ to $S/kTk^{-1}$ restricted to $D$ is proper, i.e.,
has discrete image and finite fibers.
\item
For every $x,y\in D$ we have $\pi_k(x)\ne\pi_k(y)$ unless
$x^{-1}y\in Z$.
\end{enumerate}

Then $\mu(K\setminus\Omega_D)=0$ for every tame discrete set $D$.

Moreover, there exists a sequence $R_n$ such that every discrete
subset $D\subset S$ is tame if $\#\{p\in D:\rho(p)\le R_n\}\le n$,
i.e., $R_n$ is a ``threshold sequence'' as defined in
definition~\ref{def-ts}.
\end{proposition}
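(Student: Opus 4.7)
The plan is to combine four of the earlier results: the linear-algebraic estimate of Proposition \ref{prop-LA}, its measure-theoretic lifting in Proposition \ref{prop-Rn}, the embedding data of Proposition \ref{embed-S}, and the bundle-level tameness criterion of Proposition \ref{pi-tame-is-tame}. I will handle the threshold-sequence assertion first (the technical core); the measure-zero statement for tame $D$ then follows together with a direct argument for the center-separation condition.

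First I invoke Proposition \ref{embed-S} to produce the proper finite morphism $\iota\colon S\hookrightarrow V^*$ together with the linear surjection $p\colon V^*\twoheadrightarrow W^*$ whose kernel contains no nontrivial $K$-invariant subspace, where $S/T$ properly embeds into $W$ via $(f_1,\dots,f_s)$. I equip $V^*$ and $W^*$ with $K$-invariant real Hilbert norms (available since $K$ is compact) and set $\rho(g):=\|\iota(g)\|$ on $S$ and $\tau(y):=\|y\|$ on $S/T\hookrightarrow W^*$; by the proposition immediately following Definition \ref{def-tame} the specific choice of $\rho$ does not affect tameness. Unraveling the definitions, the $K$-action on $V^*$ dual to the right-translation action on $V$ satisfies $k\cdot\iota(g)=\iota(gk)$, from which $\|p(k\cdot\iota(g))\|=\tau(\pi(gk))$. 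Applying Proposition \ref{prop-LA} to $p$ therefore yields hypothesis $(*)$ of Proposition \ref{prop-Rn}: for all $r,\delta>0$ there is $R>0$ with $\mu\{k\in K:\tau(\pi(gk))<r\}<\delta$ whenever $\rho(g)>R$.

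Proposition \ref{prop-Rn} and its corollary then furnish a sequence $R_n$ such that for every sequence $x_n\in S$ with $\rho(x_n)>R_n$, a full-measure set of $k\in K$ makes $g\mapsto\pi(gk)$ proper on $\{x_n\}$. Under the identification $S/(kTk^{-1})\cong S/T$ via $g(kTk^{-1})\mapsto gkT$, this is precisely properness of $\pi_k|_{\{x_n\}}$. Since $S\to S/T$ is a $T$-principal bundle with Stein base ($S/T$ is affine by Matsushima's theorem, $T$ being reductive in reductive $S$) and $T\cong(\C^*)^d$ is a noncompact connected complex Lie group, Proposition \ref{pi-tame-is-tame} promotes $\pi_k$-tameness to tameness; this yields the threshold-sequence assertion. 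For the measure-zero statement when $D$ is tame, tameness together with Lemma \ref{away-center} produces an automorphism of $S$ carrying $D$ (after re-enumeration) to a sequence with $\rho(x_n)>R_n$, and the corollary to Proposition \ref{prop-Rn} then secures condition (i) of $\Omega_D$ for almost every $k$. For condition (ii) I argue directly: if $x^{-1}y\notin Z=\bigcap_{k\in K}kTk^{-1}$, then $\{k\in K:k^{-1}(x^{-1}y)k\in T\}$ is a proper real-analytic subset of $K$, hence of Haar measure zero by the same observation used in the proof of Proposition \ref{prop-LA}; a countable union of such sets is null, and combining gives $\mu(K\setminus\Omega_D)=0$.

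The main obstacle is the $K$-equivariance bookkeeping: I must verify carefully that the $K$-action on $V^*$ dual to that on $V$ defined in Proposition \ref{embed-S} is indeed induced by right translation on $S$ through $\iota$, so that Proposition \ref{prop-LA} transfers via $\iota$ to condition $(*)$ of Proposition \ref{prop-Rn}. Once this linear-algebraic core is established, the remainder is essentially formal assembly of the ingredients already proved.
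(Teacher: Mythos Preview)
Your proposal follows the same strategy as the paper: pull back the linear estimate of Proposition~\ref{prop-LA} through the embedding data of Proposition~\ref{embed-S}, feed the result into Proposition~\ref{prop-Rn} and its corollary to obtain both the threshold sequence and the conull set for condition~(i), and dispose of condition~(ii) by the standard argument that a proper real-analytic subset of $K$ has Haar measure zero. The paper orders the two conclusions the other way around (conull set first, threshold sequence as a ``direct consequence'') and is terser --- it leaves implicit your identification $S/(kTk^{-1})\cong S/T$ via $g(kTk^{-1})\mapsto gkT$, your explicit appeal to Proposition~\ref{pi-tame-is-tame} for the tameness conclusion, and your invocation of Lemma~\ref{away-center} to place a given tame $D$ above the threshold --- but the substance is the same. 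One cosmetic discrepancy worth noting: the paper describes the relevant $K$-action on $V^*$ as the one induced by \emph{conjugation} on $S$, whereas you (consistently with how $V$ is generated in Proposition~\ref{embed-S}(ii)) trace it to \emph{right translation}; your bookkeeping is the more transparent one here, and the argument is unaffected either way since one is quantifying over all of $K$.
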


\begin{proof}
We use proposition~\ref{embed-S} to obtain an embedding 
$\alpha:S/T\hookrightarrow W^*$, a proper map $\beta:S\to V^*$
and a linear map $L:V^*\to W^*$. Conjugation by $K$ on $S$ induces
a natural $K$-action on $V^*$, because the dual space $V$ of $V^*$
consists of functions on $S$.

Due to property $(iv)$ of prop.~\ref{embed-S} we may invoke
proposition~\ref{prop-LA} to obtain the following statement:

{\em For every $\delta,r>0$ there exists a number $R>0$ such that
\[
||v||>R \quad\implies\quad
\mu(\{k\in K: ||L(k(v))||< r\} < \delta \ \forall v\in V^*
\]
}
Next we want to apply proposition~\ref{prop-Rn} with $Y=W^*$.

Since $\beta$ is proper, $\rho'(p)=||\beta(p)||$ defines an
exhaustion function on $S$. We observe that for any $R>0$ there
exists a number $R'>0$ such that $\rho(p)>R'$ implies $\rho'(p)>R$.

Therefore proposition~\ref{prop-Rn} combined with its corollary
implies that we have a conull set $\Omega_D'$ for which $(i)$
holds.

We still have to discuss property $(ii)$ of $\Omega_D$.
Let $p,q\in D$ and consider 
\[
C_{p,q}=\{k\in K: \pi_k(p)=\pi_k(q)\}
\]
If $C_{p,q}=K$, then 
\[
pq^{-1}=\cap_k kTk^{-1} =Z
\]
Thus $C_{p,q}$ is a nowhere dense real-analytic subset of $K$
if $pq^{-1}\not\in Z$.
This proves the statement on $\Omega_D$, because nowhere dense
real analytic subsets are of Haar measure zero, and there are only
countably many choices for $(p,q)\in D\times D$.

Finally the statement on the threshold sequence is a direct consequence.
\end{proof}

\begin{proposition}\label{tame-S-proj}
Let $S$ be a semisimple complex Lie group with maximal torus $T$
and a complex-analytic subset $E\subset S$.
Let $D\subset S$ be a tame discrete subset. Then there exists an
automorphism $\phi$  of $S$ such that for both quotients of $S$ by $T$,
the quotient by the right action as well as the quotient by the left
action, the projection map restricts to an injective map from $S$
to a discrete subset of the respective quotient manifold.
In addition, $\phi$ may be chosen such that $\phi(D)\cap E=\{\}$.
\end{proposition}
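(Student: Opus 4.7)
The plan is to combine lemma~\ref{away-center} (to enforce $\tau$-injectivity), proposition~\ref{gen-proj} (applied symmetrically to the right and left quotients), and a shear-based Baire category argument (to avoid $E$).

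First, I would apply lemma~\ref{away-center} to obtain an automorphism $\phi_0$ of $S$ such that $\tau:S\to S/Z$ is injective on $\phi_0(D)$; this promotes the ``injective modulo $Z$'' conclusion of proposition~\ref{gen-proj} to honest injectivity. Next, proposition~\ref{gen-proj} applied to the tame set $\phi_0(D)$ yields a full-measure subset $\Omega\subset K$ such that, for each $k\in\Omega$, the projection $\pi_k:S\to S/(kTk^{-1})$ restricted to $\phi_0(D)$ is proper and injective modulo $Z$. Using the natural biholomorphism $S/(kTk^{-1})\simeq S/T$ sending $g\cdot kTk^{-1}$ to $gkT$, this translates into: the composition $\pi\circ R_k$ (where $R_k(g)=gk$ is the right-multiplication automorphism of $S$) is proper and injective on $\phi_0(D)$. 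An entirely symmetric argument for the left quotient — obtained either by rerunning the proof of gen-proj on the other side or, more elegantly, by conjugating by the biholomorphic inversion $g\mapsto g^{-1}$, which carries $S/T$ to $T\backslash S$ and thereby transports the gen-proj result — yields a full-measure subset $\Omega'\subset K$ of elements $k'$ such that $\pi_L\circ L_{k'}|_{\phi_0(D)}$ is proper and injective. Since $R_k$ and $L_{k'}$ commute as biholomorphisms of $S$, and both preserve the $\tau$-injectivity of $\phi_0(D)$, the composition $\phi_1=L_{k'}\circ R_k\circ\phi_0$ is an automorphism of $S$ for which both $\pi|_{\phi_1(D)}$ and $\pi_L|_{\phi_1(D)}$ are proper injective maps.

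To further arrange $\phi(D)\cap E=\emptyset$, I would post-compose $\phi_1$ with a shear automorphism as in the proof of lemma~\ref{away-center}. Fix a positive-dimensional unipotent subgroup $U\subset S$ and a finite-dimensional space $V$ of maps $F:S/U\to U$ rich enough that the evaluation at any prescribed point of $S/U$ is surjective onto $U$; each $F\in V$ defines a shear $\phi_F:g\mapsto g\cdot F(\pi_U(g))$, a holomorphic automorphism of $S$. For each $d\in D$, the set $\{F\in V:\phi_F(\phi_1(d))\in E\}$ is a proper analytic subset of $V$ whenever the $U$-orbit $\phi_1(d)\cdot U$ is not entirely contained in $E$. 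Likewise, for each pair $d\neq d'$ in $D$, the sets of $F$ making $\pi$ or $\pi_L$ collapse $\phi_F(\phi_1(d))$ and $\phi_F(\phi_1(d'))$, or destroy properness of the image, are nowhere-dense analytic subsets of $V$. A Baire category argument on the Fr\'echet space $V$, exactly as in the proof of lemma~\ref{away-center}, then produces an $F$ avoiding all countably many bad sets simultaneously, and $\phi=\phi_F\circ\phi_1$ is the required automorphism.

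The main obstacle is the proviso ``$\phi_1(d)\cdot U\not\subset E$'', needed to ensure the $E$-avoidance bad set in $V$ is proper rather than all of $V$. For $E\neq S$ this can be guaranteed by choosing $U$ generically, or, if a single unipotent subgroup does not work uniformly for all $d\in D$, by composing shears along several unipotent subgroups $U_1,\ldots,U_m$ whose joint translates sweep out $S$; since $E$ is a proper analytic subset, this reduces the orbit-containment obstruction to the impossibility $E=S$, after which the Baire argument finishes.
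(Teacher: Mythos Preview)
Your treatment of the two projections is essentially the paper's: first apply lemma~\ref{away-center} to get $\tau$-injectivity, then use proposition~\ref{gen-proj} on $D$ for the right quotient and on $D'=D^{-1}$ (via the inversion $g\mapsto g^{-1}$) for the left quotient, and combine the two $K$-translations into a single automorphism $g\mapsto kgh^{-1}$. This part is correct and matches the paper.

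The divergence, and the gap, is in how you handle $E$. The paper does \emph{not} introduce a separate shear step. Instead it folds $E$-avoidance into the same measure-theoretic argument on $K\times K$: for each $g\in D$ the set $\{(k,h)\in K\times K:kgh\in E\}$ is a nowhere dense real-analytic subset (since $Kg$, being a translate of the maximal compact, is a maximal totally real submanifold and hence cannot lie in a proper complex-analytic set $E$), so its complement $U$ is conull. Then $M=(\Omega_D\times\Omega_{D'})\cap U$ is conull, and any $(k,h)\in M$ gives $\phi:g\mapsto kgh^{-1}$ satisfying all three requirements simultaneously.

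Your alternative --- post-composing $\phi_1$ with a unipotent shear $\phi_F$ --- runs into a real difficulty you do not resolve. The shear $\phi_F(g)=g\cdot F(\pi_U(g))$ multiplies on the right by an element of $U$, which in general moves the right $T$-coset of $g$. So $\phi_F$ can destroy the properness of $\pi|_{\phi_1(D)}$ that you just arranged. You assert that the set of $F$ for which properness fails is a nowhere-dense analytic subset of $V$, but properness of $\pi$ on the infinite image set is a global condition, not a countable conjunction of closed analytic conditions on pairs; there is no evident reason it should cut out a meager subset of $V$. (Injectivity for each fixed pair $d\ne d'$ is fine, since at $F=0$ it already holds; but discreteness of the image in $S/T$ is not controlled this way.) The same objection applies to $\pi_L$. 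Thus the Baire step, as written, does not go through.

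The moral: once you are already working with the $K\times K$-action to control the projections, the cheapest way to avoid $E$ is to impose it as one more conull condition on $(k,h)$, rather than to perturb afterwards and risk undoing what you have built.
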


\begin{proof}
Let $R_n$ be a threshold sequence and let $Z$ denote the center of $S$. 
Due to lemma~\ref{away-center} we may without loss
of generality assume that the natural projection $\tau: S\to S/Z$ is injective on $D$.
Now let $\Omega_D$ be defined as in proposition~\ref{gen-proj}.

Define $\alpha:S\to S$ as $\alpha(g)=g^{-1}$ and $D'=\alpha(D)$.

Due to proposition~\ref{gen-proj} we know that $K\setminus\Omega_D$
and $K\setminus\Omega_{D'})$ are
sets of Haar measure zero.

Now define
\[
U=(K\times K)\setminus\cup_{g\in D}\{ (k,h)\in K\times K: kgh\in E\}
\]
For each $dg\in D$ the set $\{ (k,h)\in K\times K: kgh\in E\}$ is a nowhere
dense real-analytic subset and therefore a set of Haar measure zero.
Since $D$ is countable, it follows that $(K\times K)\setminus U$ is a set of Haar measure
zero.
We observe that
\[
M=\left(\Omega_D\times\Omega_{D'}\right)\cap U
\]
is a conull set and therefore non-empty.
Now we choose $(k,h)\in M$  and define $\phi:g\mapsto kgh^{-1}$.
\end{proof}

\begin{proposition}\label{ex-non-tame}
Every complex semisimple Lie group $S$ admits a non-tame discrete subset.
\end{proposition}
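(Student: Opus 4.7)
My plan is to build a non-tame discrete subset by a diagonal/defeat construction against a countable dense family of automorphisms, then upgrade to all of $\Auto(S)$ by a continuity argument.

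First, I would fix an exhaustion function $\rho:S\to\R^+$ arising from a closed algebraic embedding $S\hookrightarrow\C^N$ (which exists since $S$ is an affine variety), and choose a countable family $\{\phi_k\}_{k\in\N}\subset\Auto(S)$ which is dense in the compact--open topology. Such a countable dense family exists because $S$ is second countable, hence $C(S,S)$ is a separable Polish space, and $\Auto(S)$ inherits this separability.

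Second, I would inductively construct the discrete subset $D=\{p_n\}_{n\in\N}$ together with the target function $\zeta:D\to\R^+$. At step $n$, I would pick a number $c_n$ large enough that the compact set $L_n=\bigcap_{k\le n}\phi_k^{-1}\bigl(\{g:\rho(g)\le c_n\}\bigr)$ contains points $p$ with $\rho(p)$ arbitrarily large (compared to $\rho(p_{n-1})+n$). This is possible because each $\phi_k$ is a biholomorphism, so enlarging $c_n$ forces $\phi_k^{-1}(\{\rho\le c_n\})$ to exhaust $S$, and for $c_n$ sufficiently large the finite intersection $L_n$ becomes unbounded. Choose $p_n\in L_n$ with $\rho(p_n)>\rho(p_{n-1})+n$, and set $\zeta(p_n)=c_n+1$. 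By construction, for every $k\le n$ we have $\rho(\phi_k(p_n))\le c_n<\zeta(p_n)$, so each test automorphism is defeated at every $p_n$ with $n\ge k$.

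Third, I would extend the defeat from $\{\phi_k\}$ to arbitrary $\phi\in\Auto(S)$. The condition ``$\rho(\phi(p_n))<\zeta(p_n)$'' is an \emph{open} condition on $\phi$ in the compact--open topology for any fixed $n$ (since evaluation at $p_n$ is continuous and $\rho$ is continuous). Thus the set of $\phi$ that fail at \emph{some} $p_n$ is an open subset of $\Auto(S)$ containing the dense subset $\{\phi_k\}$; by density it is itself dense and open. To upgrade ``dense and open'' to ``all of $\Auto(S)$'' — this is where the main difficulty lies — I would arrange the defeat to be \emph{robust} by incorporating a margin: choose $p_n$ so that $\rho(\phi_k(p_n))<\zeta(p_n)-\epsilon_n$ for a suitable $\epsilon_n>0$, guaranteeing that automorphisms sufficiently close to $\phi_k$ on a compact neighborhood of $p_n$ also fail there. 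Given arbitrary $\phi$, for each $n$ pick $k=k(n)\le n$ with $\phi_k$ close enough to $\phi$ at the single point $p_n$ (using pointwise density), so that $\rho(\phi(p_n))$ lies within $\epsilon_n$ of $\rho(\phi_{k(n)}(p_n))$ and $\phi$ inherits failure.

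The principal obstacle is precisely this last step: density in the compact--open topology only controls behavior on compact sets, whereas $p_n\to\infty$ in $S$. Overcoming this requires either a careful choice of the margins $\epsilon_n$ and of the ``test'' family so that $\phi\mapsto\phi(p_n)$ is uniformly continuous on a sufficiently rich subfamily, or replacing compact--open density with density in a stronger topology (for instance a topology that also controls derivatives and behavior along a fixed exhaustion). Once this approximation is in place, the contrapositive of the tameness criterion yields that $D$ is not tame.
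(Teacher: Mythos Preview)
Your proposal has a genuine gap, and it is exactly the one you flag in your final paragraph; unfortunately the suggested fixes cannot close it. Observe that for any discrete set $D=\{p_n\}$ and any $\zeta$, the ``success set''
\[
G_\zeta=\{\phi\in\Auto(S):\rho(\phi(p_n))\ge\zeta(p_n)\ \forall n\}
\]
is \emph{closed} in the compact--open topology (it is a countable intersection of closed pointwise conditions). Your diagonal construction only arranges $G_\zeta\cap\{\phi_k\}=\emptyset$; since $\{\phi_k\}$ is dense and $G_\zeta$ is closed, this yields merely that $G_\zeta$ has empty interior, not that $G_\zeta=\emptyset$. Your ``robustness via margins'' step asks, for an arbitrary $\phi$, to find some $n$ and some $k(n)\le n$ with $\phi_{k(n)}$ close to $\phi$ at $p_n$; but compact--open density gives you no control over the index $k$ of the approximant, and since the points $p_n$ escape to infinity you cannot bootstrap from closeness on a fixed compact set. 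There is no stronger separable topology on $\Auto(S)$ that controls values at \emph{all} points simultaneously --- $\Auto(S)$ is infinite-dimensional and not locally compact, which is precisely why the Baire/exhaustion argument the paper uses in the finite-dimensional case (Proposition 4.3) does not transfer.

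The paper's proof avoids this entirely by a different mechanism. It invokes the existence (from \cite{JW-LDS}) of a discrete set $D_0\subset S$ whose complement has \emph{trivial} automorphism group, and then shows that a tame discrete set can never be this rigid: if $D$ is tame, one can project it properly to $S/T$ for a maximal torus $T$ (via the threshold/generic projection machinery developed earlier), and then any holomorphic map $f:S/T\to T$ with $f\equiv e$ on the image of $D$ yields a nontrivial automorphism of $S$ fixing $D$ pointwise. Hence $D_0$ cannot be tame. The key idea you are missing is to use a \emph{qualitative} obstruction (rigidity of the complement) rather than trying to defeat each automorphism quantitatively.
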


\begin{proof}
$S$  is Stein as a complex manifold.
Due to \cite{JW-LDS} there exists a discrete subset $D_0\subset S$ such
that $\Auto(S\setminus D_0)=\{id\}$.
On the other hand, if $D$ is a tame discrete subset in $S$, then there
exists a biholomorphic self-map $\phi$ of $S$ such that $\pi(\phi(D))$
is discrete in $S/T$ where $T$ is a maximal torus
and $\pi:S\to S/T$ denotes the natural projection. But this implies that 
$S$ admits many holomorphic automorphisms fixing each element of $D$:
We may just take any holomorphic map $f:S/T\to T$ such that
$f(q)=e_T$ for all $q\in\pi(\phi(D))$ and define
\[
x\mapsto \phi^{-1}\left(\phi(x)\cdot f(\pi(\phi(x)))\right).
\]
Thus $\Auto(S\setminus D_0)=\{id\}$ prevents $D_0$ from being tame.
\end{proof}

\section{The case $SL_n\C$}

\begin{definition}
A sequence $A(k)$ in $S=SL_n(\Z)$ is called {\em ``well-placed''}
if the matrix coefficients
$A_{i,j}(k)$ of the elements $A(k)$ fulfill
the following two conditions:
\begin{enumerate}
\item
$A_{i,j}(k)\ne 0$ for all $k\in\N$, $1\le i,j\le n$.
\item
For every $2\le j\le n$ and every $1\le h\le n$ the sequences 
$\alpha_k=|A_{1,h}(k)/A_{j,h}(k)|$
and $\beta_k=|A_{h,1}(k)/A_{h,j}(k)|$
are unbounded and strictly increasing.
\end{enumerate}
\end{definition}

\begin{proposition}\label{well-placed-q}
If a sequence $A(k)$ is ``well-placed'', then
$D=\{A(k):k\in\N\}$ is a tame discrete subset of $S$
and both natural projections
$\pi:S\to S/T$ and $\pi':S\to T\backslash S$ map $D$ injectively
onto a discrete subset of the respective quotient manifold.
\end{proposition}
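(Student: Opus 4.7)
The plan is to establish the two quotient-map claims first---namely that $\pi\colon S\to S/T$ and $\pi'\colon S\to T\backslash S$ both restrict to injective maps on $D$ with discrete image---and then to deduce tameness from Proposition~\ref{pi-tame-is-tame}. The right projection $\pi\colon S\to S/T$ is a principal $T$-bundle with non-compact connected structure group $T\simeq(\C^*)^{n-1}$, and its base $S/T$ is affine (hence Stein) by Matsushima's theorem since both $S$ and $T$ are reductive. Thus Proposition~\ref{pi-tame-is-tame} will apply, and once $\pi|_D$ is known to be proper it suffices to take $\phi=\mathrm{id}$ to see that $D$ is $\pi$-tame and hence tame.

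For injectivity of $\pi|_D$: if $\pi(A(k))=\pi(A(m))$ with $k\ne m$, then $A(k)=A(m)\cdot t$ for some diagonal $t=\mathrm{diag}(t_1,\dots,t_n)\in T$, so $A_{j,h}(k)=t_h A_{j,h}(m)$ for all $j,h$; the factor $t_h$ cancels in every column ratio, yielding
\[
|A_{1,h}(k)/A_{j,h}(k)|=|A_{1,h}(m)/A_{j,h}(m)|,
\]
which contradicts the strict monotonicity required by condition~(ii) of the ``well-placed'' definition. The analogous computation with the row ratios $|A_{h,1}(k)/A_{h,j}(k)|$ handles $\pi'|_D$.

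For discreteness of $\pi(D)$ in $S/T$, suppose a subsequence $\pi(A(k_j))$ converges to some $q\in S/T$. By local triviality of the principal $T$-bundle one can pick $t_j\in T$ with $A(k_j)t_j\to B\in S$, so the $h$-th column of $A(k_j)$ converges projectively to $[Be_h]\in\P^{n-1}$. However, condition~(ii) gives $|A_{j,h}(k)/A_{1,h}(k)|\to 0$ for every $j\ge 2$ and every $h$, so the projective class of the $h$-th column of $A(k)$ converges to $[1:0:\cdots:0]$, \emph{independent of $h$}. Thus every column of $B$ is a scalar multiple of $e_1$, forcing $\rank(B)\le 1$ and contradicting $B\in SL_n(\C)$. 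The mirror argument applied to rows (using $|A_{h,1}(k)/A_{h,j}(k)|\to\infty$) shows $\pi'(D)$ is discrete in $T\backslash S$.

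Since $\pi|_D$ is injective with discrete image it is proper, so $D$ is $\pi$-tame and therefore tame by Proposition~\ref{pi-tame-is-tame}. The delicate step is the discreteness argument: one must convert the quantitative hypothesis on entry ratios into the geometric statement that all columns degenerate to a common projective direction, ruling out convergence to any nondegenerate basis in $S/T$. The injectivity is, by contrast, immediate from $T$-invariance of the chosen ratios.
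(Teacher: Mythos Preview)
Your argument is correct and follows essentially the same route as the paper: the paper identifies $S/T$ with $(\P_{n-1}(\C))^n\setminus Z$ via the projective classes of the columns, observes that the well-placed condition forces $\pi(A(k))\to([e_1],\ldots,[e_1])\in Z$, and deduces injectivity from strict monotonicity of the column ratios; your local-triviality lift to a would-be limit $B\in SL_n(\C)$ with all columns proportional to $e_1$ is just a rephrasing of the same degeneration, and both then invoke Proposition~\ref{pi-tame-is-tame} together with Matsushima's theorem.
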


\begin{proof}
$T$ is the subgroup of diagonal matrices. Its action by
left multiplication on $S$ may be identified
with $(\C^*)^{n-1}$ acting on the coefficients $A_{i,j}$ of elements
$A$ of $SL_n(\C)$  via $A_{j,k}\mapsto A_{j,k}\lambda_{k-1}$ for $k\ge 2$
and
$A_{j,1}\mapsto A_{j,1}\Pi_k \lambda_k^{-1}$.
We may identify $S/T$ with $\left( \P_{n-1}(\C)\right)^n\setminus Z$ where
the projection map $\pi:S\to S/T$ is realized by projecting the columns
of the matrix $A$ to their respective equivalence classes in $\P_n(\C)$;
and where the ``bad locus'' $Z$ consists of those elements
$([v_1],\ldots,[v_n])\in \left( \P_{n-1}(\C)\right)^n$ for which
$v_1,\ldots,v_n$ are not linearly independent.
The assumption of §$A(k)$ being ``well-placed'' implies that
\[
\lim_{k\to\infty}\pi(A(k))=([e],\ldots,[e])
\]
with $e=(1,0\ldots,0)$. Therefore  $\lim_{k\to\infty}\pi(A(k))\in Z$,
i.e., $\{\pi(A(k)):k\in\N\}$ is discrete in $S/T$. Injectivity
follows from the requirement that any sequence 
$k\mapsto |A_{j,1}(k)/A_{j,h}(k)|$ is strictly increasing.
Tameness is due to proposition \ref{pi-tame-is-tame}, taking into account
that $S/T$ is Stein due to the theorem of Matsushima.
The statement for the left quotient are derived in the same way.
\end{proof}

\begin{lemma}\label{lambda}
Let $A(k)$ be  a well-placed sequence $A(k)$.
Assume that $\lambda_i(k)\in\C^*$ are given for $1\le i\le n$, $k\in\N$
such that
\begin{enumerate}
\item
$|\lambda_1(k)|\ge|\lambda_j(k)|$ for all $j\in\{2,\ldots, n\}$, $k\in\N$.
\item
$|\lambda_1(k+1)\lambda_j(k)|\ge |\lambda_1(k)\lambda_j(k+1)|$
for all $j\in\{2,\ldots, n\}$, $k\in\N$.
\item
$\Pi_{i=1}^n \lambda_i(k)=1$ for all $k\in\N$.
\end{enumerate}

Define $B_{i,j}(k)=\lambda_iA_{i,j}(k)$.
Then $B(k)$ is well-placed and equivalent to $A(k)$.
\end{lemma}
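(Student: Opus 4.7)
The plan is to treat the two conclusions separately. First, well-placedness of $B$ follows by direct algebraic manipulation using the three hypotheses on the $\lambda_i(k)$. Second, equivalence reduces to constructing a holomorphic automorphism of $S=SL_n(\C)$ that realises the prescribed transformation $A(k)\mapsto D(k)\cdot A(k)$, where $D(k)$ denotes the diagonal matrix with entries $\lambda_1(k),\ldots,\lambda_n(k)$; this matrix lies in the maximal torus $T\subset S$ precisely because of condition (iii).

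For well-placedness of $B(k)=D(k)A(k)$, the non-vanishing of all matrix entries is immediate since $\lambda_i(k)\ne 0$. For the column ratios, the $\lambda_h(k)$ factors cancel, so $|B_{h,1}(k)/B_{h,j}(k)|=|A_{h,1}(k)/A_{h,j}(k)|$ inherits both unboundedness and strict monotonicity directly from $A$. For the row ratios,
\[
\left|\frac{B_{1,h}(k)}{B_{j,h}(k)}\right|=\left|\frac{\lambda_1(k)}{\lambda_j(k)}\right|\cdot\left|\frac{A_{1,h}(k)}{A_{j,h}(k)}\right|.
\]
Condition (i) gives $|\lambda_1(k)/\lambda_j(k)|\ge 1$, so the product inherits unboundedness from the $A$-factor; condition (ii) rearranges to $|\lambda_1(k+1)/\lambda_j(k+1)|\ge|\lambda_1(k)/\lambda_j(k)|$, which combined with the strict increase of $|A_{1,h}(k)/A_{j,h}(k)|$ yields strict monotonicity of the product.

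For the equivalence, I would invoke Proposition~\ref{well-placed-q} in its version for the left quotient $\pi':S\to T\backslash S$: the image $\pi'(\{A(k)\})$ is a discrete subset of $T\backslash S$ and $\pi'$ is injective on $\{A(k)\}$. Since $T$ is reductive, $T\backslash S$ is Stein by Matsushima's theorem, so Corollary~\ref{prescribe-on-discrete} yields a holomorphic map $F:T\backslash S\to T$ with $F(\pi'(A(k)))=D(k)$ for every $k$. Define $\phi:S\to S$ by $\phi(g)=F(\pi'(g))\cdot g$. Because $F$ takes values in $T$ and left-multiplication by elements of $T$ preserves fibres of $\pi'$, we have $\pi'(\phi(g))=\pi'(g)$, which identifies $g\mapsto F(\pi'(g))^{-1}\cdot g$ as a two-sided inverse. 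Thus $\phi$ is a biholomorphic automorphism, and by construction $\phi(A(k))=D(k)A(k)=B(k)$, proving equivalence.

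The only bookkeeping point worth flagging is the distinction between the left and right quotients: the prescribed transformation is left-multiplication by a $k$-dependent element of $T$, which forces the use of the left quotient $T\backslash S$ rather than $S/T$. This is precisely the reason Proposition~\ref{well-placed-q} was formulated for both quotients simultaneously, so no additional work is needed here. Beyond that, the argument is a direct application of tools already established in the paper.
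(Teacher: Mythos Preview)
Your proof is correct and follows the same overall strategy as the paper: verify well-placedness of $B(k)$ directly from the hypotheses on the $\lambda_i(k)$, then use Proposition~\ref{well-placed-q} to get injectivity and discreteness of the image in a $T$-quotient, and build the required automorphism from a holomorphic map into $T$ prescribed on that discrete image.

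One point worth noting: you carefully distinguish the left quotient $T\backslash S$ from the right quotient $S/T$, and you use the left one because $B_{i,j}(k)=\lambda_i(k)A_{i,j}(k)$ means $B(k)=D(k)\cdot A(k)$ is \emph{left} multiplication by the diagonal matrix $D(k)\in T$. The paper's proof invokes $S/T$ and writes $B(k)=A(k)\cdot g(A(k))$, which would require $g(A(k))=A(k)^{-1}D(k)A(k)$---generally not diagonal, hence not in $T$. So your use of $T\backslash S$ (and the explicit remark that Proposition~\ref{well-placed-q} was stated for both quotients precisely to cover this) is not merely bookkeeping but an actual correction; the paper's version works only after this left/right swap. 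Your explicit verification of the row- and column-ratio conditions for well-placedness is also more detailed than the paper's one-line assertion, and is carried out correctly.
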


\begin{proof}
The conditions on the §$\lambda_i(k)$ ensure that $B(k)$ is
well-placed. 
Proposition~\ref{well-placed-q} implies that the
sequence $A(k)$ is mapped injectively onto a discrete subset of $S/T$
by the natural projection $\pi:S\to S/T$. Now the $\lambda_i(k)$ define
a map $g$ from $D=\{A(k):k\in\N\}$ to $T$ such that $B(k)=A(k)\cdot g(A(k))$
for all $k\in\N$.
Hence $B(k)$ is equivalent to $A(k)$.
\end{proof}

\begin{proposition}\label{tame-implies-well-placed}
Let $D(k)$ be a tame sequence in $S=SL_n(\C)$.
Then there is a sequence $A(k)$ in $S$
such that 
\begin{enumerate}
\item
$D(k)$ and $A(k)$ are equivalent
\item
$A(k)$ is well-placed.
\end{enumerate}
\end{proposition}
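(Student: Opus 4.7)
The plan is to first normalize via an automorphism so that the image of $D$ has nonzero matrix entries with both projections $\pi:S\to S/T$ and $\pi':S\to T\backslash S$ giving discrete injective sequences, then to choose diagonal matrices $\Lambda(k),M(k)\in T$ achieving the required row and column ratio growth, and finally to realize $A(k)\mapsto\Lambda(k)A(k)M(k)$ as a composition of two principal-$T$-bundle automorphisms of $S=SL_n(\C)$.

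For the first step, apply proposition~\ref{tame-S-proj} with $E=\bigcup_{i,j}\{X\in SL_n(\C):X_{ij}=0\}$ to get an automorphism $\phi_0$ such that $A(k):=\phi_0(D(k))$ has all entries nonzero and both $\pi,\pi'$ restrict on $A$ to injective maps with discrete image. A direct computation yields $(\Lambda AM)_{i,j}=\lambda_i\mu_j A_{i,j}$, so the row and column ratios decouple: $|(\Lambda AM)_{1,h}/(\Lambda AM)_{j,h}|=|\lambda_1/\lambda_j|\cdot|A_{1,h}/A_{j,h}|$ depends only on $\Lambda$, while $|(\Lambda AM)_{h,1}/(\Lambda AM)_{h,j}|=|\mu_1/\mu_j|\cdot|A_{h,1}/A_{h,j}|$ depends only on $M$. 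A recursion on $k$ then produces positive reals $\alpha_j(k),\beta_j(k)$ ($2\le j\le n$) that grow fast enough to make $\alpha_j(k)|A_{1,h}(k)/A_{j,h}(k)|$ and $\beta_j(k)|A_{h,1}(k)/A_{h,j}(k)|$ unbounded and strictly increasing for every $h$. Imposing $|\lambda_1/\lambda_j|=\alpha_j$, $|\mu_1/\mu_j|=\beta_j$, and $\prod_i\lambda_i=\prod_i\mu_i=1$ then determines $|\lambda_i(k)|,|\mu_j(k)|$, and leaves $n-1$ phase degrees of freedom in each.

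To realize the required multiplications as an automorphism, first use the principal $T$-bundle $S\to S/T$ coming from the right action: $S/T$ is Stein by Matsushima's theorem and $\pi(A(k))$ is discrete and injective, so corollary~\ref{prescribe-on-discrete} gives a holomorphic $F_R:S/T\to T$ with $F_R(\pi(A(k)))=M(k)$, and $\psi_R(x)=x\cdot F_R(\pi(x))$ is an automorphism with $\psi_R(A(k))=A(k)M(k)=:B(k)$. The symmetric step on the left needs $\pi'(B(k))$ to be discrete and injective in $T\backslash S$. Realizing $T\backslash S$ as the open subset of $(\P^{n-1})^n$ of projectively linearly independent row tuples, the arranged $|B_{h,1}(k)/B_{h,j}(k)|\to\infty$ forces every projective row of $B(k)$ to tend to $[1:0:\cdots:0]\in\P^{n-1}$, so $\pi'(B(k))$ accumulates only on the degenerate locus (outside $T\backslash S$), giving discreteness; injectivity can be arranged by exploiting the phase freedom in the $\mu_j(k)$, since each coincidence $B(k_1)=tB(k_2)$ ($t\in T$) imposes a nontrivial algebraic condition on the phases and only countably many such pairs arise. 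Then corollary~\ref{prescribe-on-discrete} gives $F_L:T\backslash S\to T$ with $F_L(\pi'(B(k)))=\Lambda(k)$, and the composition $\psi_L\circ\psi_R$ with $\psi_L(x)=F_L(\pi'(x))\cdot x$ sends $A(k)$ to the well-placed sequence $\Lambda(k)A(k)M(k)$, which is therefore equivalent to $D(k)$.

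The main obstacle is the rigorous verification of discreteness and injectivity of $\pi'(B(k))$ in $T\backslash S$, where the right-multiplication by the varying matrix $M(k)$ can in principle destroy discreteness of the $\pi'$-image. If the asymptotic-convergence argument above is not tight enough, one fallback is to first use the tameness of $D$ to replace $A(k)$ by an equivalent but much sparser sequence, so that $\rho(A(k))$ grows much faster than any growth required of $M(k)$, and then invoke the proper embedding of $T\backslash S$ into affine space from proposition~\ref{embed-S} to obtain discreteness directly.
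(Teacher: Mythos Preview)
Your proof is correct and follows the same route as the paper: invoke proposition~\ref{tame-S-proj} to arrange nonzero entries and injective discrete projections, then realize the required diagonal rescalings as principal-$T$-bundle automorphisms via corollary~\ref{prescribe-on-discrete}. The paper's own argument is much terser---it writes out only the right-multiplication step and closes with ``from this we deduce the assertion''---so your explicit two-sided treatment, together with the verification that $\pi'(B(k))$ remains discrete (via convergence of the projectivized rows to the degenerate locus in $(\P^{n-1})^n$), in fact fills in what the paper leaves to the reader.

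One simplification: the phase-freedom argument you give for injectivity of $\pi'$ on $B(k)$ is unnecessary. The column ratios $B_{h,1}(k)/B_{h,j}(k)$ are invariant under the left $T$-action (left multiplication by $t\in T$ scales row $h$ by $t_h$, leaving ratios within a row unchanged), and you have already arranged their moduli to be strictly increasing in $k$; hence distinct indices $k$ automatically give distinct $\pi'$-images. This is exactly the mechanism the paper uses in proposition~\ref{well-placed-q} to obtain injectivity of $\pi$ from the strictly increasing row ratios, applied here on the other side.
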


\begin{proof}
Due to proposition~\ref{tame-S-proj} we may assume that the projection
map $\pi:S\to S/T$ restricts to an injective map from $D$ onto a
discrete subset of $S/T$. Moreover, we may assume that 
all the matrix coefficients $A_{i,j}(k)$ of every element $A(k)$
are non-zero, again by  proposition~\ref{tame-S-proj}.
(noting that the set $E$ of all $A\in SL_n(\C)$ with at least one
matrix coefficient being zero is a complex analytic subset of $S$.)
$T$ is non-compact and $S/T$ is Stein. Thus for every 
sequence $\zeta(k)\in T$ we can find an automorphism $\phi$ of the complex
manifold $S$ such that $\phi(A(k))=A(k)\cdot\zeta(k)$.
From this we deduce the assertion.
\end{proof}

\begin{proposition}\label{col-equal}
Let $A(k)$, $B(k)$ be well-placed sequences in $S=SL_n(\C)$.
Let $\tau:SL_n(\C)\to\C^n\setminus\{(0,\ldots,0)\}$ be the map
which associated to each matrix its first column vector.

Then there exists a well-placed sequences $C(k)$, $D(k)$ in $S$ such
that 
\begin{enumerate}
\item
$A$ and $C$ are equivalent,
\item
$B$ and $D$ are equivalent,
\item
$\tau(C(k))=\tau(D(k))$ for all $k\in\N$.
\item
$\{\tau(C(k)):k\in\N\}$ is discrete in $\C^n$.
\end{enumerate}
\end{proposition}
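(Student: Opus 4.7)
The plan is to modify $A(k)$ to $C(k)$ and $B(k)$ to $D(k)$ by combined right and left multiplications by diagonal matrices, arranged so that both sequences have first column equal to a common prescribed sequence $v(k)\in\C^n$ which will be manifestly discrete.

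Write $T$ for the diagonal maximal torus of $S=SL_n(\C)$. Since $A(k)$ is well-placed, proposition~\ref{well-placed-q} gives that $\pi(A(k))\subset S/T$ is discrete injective; as $S/T$ is Stein (Matsushima) and $T\cong(\C^*)^{n-1}$, corollary~\ref{prescribe-on-discrete} produces a holomorphic $f:S/T\to T$ with prescribed values on $\pi(A(k))$, so any right $T$-multiplication $A(k)\mapsto A(k)M^A(k)$ (with $M^A(k)\in T$) is realized by the automorphism $X\mapsto X\,f(\pi(X))$ of $S$. If the resulting sequence $A(k)M^A(k)$ is still well-placed, then by the same proposition its $\pi'$-image in $T\backslash S$ is discrete injective, so an analogous argument realizes left $T$-multiplications by an automorphism $X\mapsto g(\pi'(X))\,X$. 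It therefore suffices to pick $\Lambda^A(k),M^A(k)\in T$ so that the intermediate $A(k)M^A(k)$ and the final $C(k):=\Lambda^A(k)A(k)M^A(k)$ are both well-placed and $C(k)$ has first column $v(k)$; likewise for $B$.

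Take $v(k)=(N_k,1,1,\ldots,1)$ with $N_k>0$ to be chosen, and set $a_i(k):=A_{i,1}(k)$ (all nonzero by well-placedness). Writing $\Lambda^A=\mathrm{diag}(\lambda_i^A)$, $M^A=\mathrm{diag}(m_j^A)$, the first-column equation $\lambda_i^A m_1^A a_i(k)=v_i(k)$ together with $\prod_i\lambda_i^A=1$ forces $(m_1^A(k))^n=N_k/\prod_i a_i(k)$; fix any $n$-th root, set $\lambda_i^A(k)=v_i(k)/(a_i(k)m_1^A(k))$, and take $m_j^A=(m_1^A)^{-1/(n-1)}$ for $j\ge 2$ to enforce $\prod_j m_j^A=1$; do the analogous construction for $B$. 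Well-placedness of the intermediate and final sequences then reduces to showing that
\[
|\lambda_1^A/\lambda_j^A|=N_k\cdot\frac{|a_j(k)|}{|a_1(k)|} \quad\text{and}\quad |m_1^A/m_j^A|=\left(\frac{N_k}{\prod_i|a_i(k)|}\right)^{1/(n-1)}
\]
are non-decreasing and bounded below in $k$ for each $j\ge 2$ (so that their product with the already strictly increasing unbounded well-placedness ratios of $A$ yields the corresponding ratios for $C$), and likewise for $B,D$.

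Choose $N_k$ inductively: at step $k$, pick $N_k\ge N_{k-1}+1$ large enough that each of $N_k|a_j(k)/a_1(k)|$, $N_k|b_j(k)/b_1(k)|$, $N_k/\prod_i|a_i(k)|$, $N_k/\prod_i|b_i(k)|$ is at least $1$ and strictly exceeds its value at step $k-1$, for every $j\ge 2$. Only finitely many finite positive lower bounds are imposed at each stage, so such $N_k$ always exist; automatically $N_k\to\infty$, so $\{v(k)\}$ is discrete in $\C^n$. The main obstacle is precisely this inductive bookkeeping, forcing a single scalar $N_k$ to simultaneously dominate four families of ratios; once it is handled, well-placedness of $C$ and $D$ follows mechanically, and the composite automorphisms described above realize $A\to C$ and $B\to D$.
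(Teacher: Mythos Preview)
Your proof is correct and follows essentially the same strategy as the paper: pass from $A$ to $C$ (and $B$ to $D$) via a two-step diagonal multiplication, checking at each stage the monotonicity conditions of lemma~\ref{lambda} (and its right-multiplication analogue) so that well-placedness is preserved and the multiplications are realized by global automorphisms coming from the injective discrete projections to $S/T$ and $T\backslash S$.

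The one genuine variation worth noting is your choice of an explicit common target first column $v(k)=(N_k,1,\ldots,1)$. The paper instead sets up two rounds of recursive conditions on the diagonal factors (first forcing the entries in rows $2,\ldots,n$ of the first columns to agree, then a second pass to match the $(1,1)$-entries), and invokes lemma~\ref{lambda} twice. Your explicit target buys a cleaner verification of condition~(iv): since $N_k\ge N_{k-1}+1$, the set $\{(N_k,1,\ldots,1)\}$ is manifestly discrete in $\C^n$, whereas in the paper's argument discreteness has to be read off from the recursive growth conditions. Conversely, the paper's recursive scheme avoids committing to a specific target and makes the role of lemma~\ref{lambda} more visibly modular. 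Either organization yields the result; the underlying mechanism---prescribing diagonal factors on the discrete images in $S/T$ and $T\backslash S$ via corollary~\ref{prescribe-on-discrete}---is identical.

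One minor wording point: where you write ``non-decreasing and bounded below'' you need (and later impose) the stronger condition ``non-decreasing and at least $1$'', since lemma~\ref{lambda} requires $|\lambda_1/\lambda_j|\ge 1$, not merely bounded below.
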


\begin{proof}
We want to use lemma~\ref{lambda}.
For this purpose we define recursively sequences $\lambda_j(k)$
and $\mu_j(k)$ in $(\C^*)^n$ such that
\begin{enumerate}
\item
$|\lambda_j(k)|\le 1$ and $|\mu_j(k)|\le 1$ for all $2\le j\le n$.
\item
$|\lambda_j(k)|\le |\lambda_j(k-1)/\lambda_1(k-1)|$ for $2\le j\le n$.
\item
$|\mu_j(k)|\le |\mu_j(k-1)/\mu_1(k-1)|$ for $2\le j\le n$.
\item
$\lambda_j(k)A_{j,1}(k)=\mu_j(k)B_{j,1}$ for $2\le j\le n$.
\item
$\Pi_{j=1}^n\lambda_j(k)=1=\Pi_{j=1}^n\mu_j(k)$
\end{enumerate}
Next we choose
recursively sequences $\tilde\lambda_j(k)$
and $\tilde\mu_j(k)$ in $(\C^*)^n$ such that
\begin{enumerate}
\item
$|\tilde\lambda_j(k)|\le 1$ and $|\tilde\mu_j(k)|\le 1$ for all $2\le j\le n$.
\item
$|\tilde\lambda_j(k)|\le |\tilde\lambda_j(k-1)/\tilde\lambda_1(k-1)|$ for $2\le j\le n$.
\item
$|\tilde\mu_j(k)|\le |\tilde\mu_j(k-1)/\tilde\mu_1(k-1)|$ for $2\le j\le n$.
\item
$\tilde\lambda_1(k)\lambda_1(k)A_{1,1}(k)
=\tilde\mu_1(k)\mu_1(k)B_{1,1}$.
\item
$\Pi_{j=1}^n\tilde\lambda_j(k)=1=\Pi_{j=1}^n\tilde\mu_j(k)$
\end{enumerate}

Now we define sequences $C'(k)$, $C(k)$ as follows:
\begin{enumerate}
\item
$C'_{j,h}(k)=\lambda_j(k)A_{h,j}(k)$
\item
$C_{j,h}(k)=\tilde\lambda_h(k)C'_{j,h}(k)=\tilde\lambda_h(k)\lambda_j(k)A_{h,j}(k)$
\end{enumerate}

Lemma~\ref{lambda} now first implies that $A(k)$ and $C'(k)$ are equivalent
well-placed sequences and then it implies that
$C(k)$ and $C'(k)$ are equivalent well-placed sequences.

Thus $A(k)$ and $C(k)$ are equivalent.

Similarily one verifies that for 
\[
D_{j,h}(k)=\tilde\lambda_h(k)\lambda_j(k)B_{h,j}(k)
\]
the sequences $B(k)$ and $D(k)$ are equivalent.
\end{proof}

\begin{proposition}\label{sln-equiv}
Any two tame sequences in $S=SL_n(\C)$ are equivalent.
\end{proposition}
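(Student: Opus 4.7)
The plan is to combine Propositions~\ref{tame-implies-well-placed} and~\ref{col-equal} to reduce the problem to two well-placed sequences $C(k),D(k)\in SL_n(\C)$ with matching first columns, and then to connect them by a shear-type automorphism of the principal bundle $\tau:SL_n(\C)\to\C^n\setminus\{0\}$ that sends a matrix to its first column.

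Starting from two tame sequences in $S=SL_n(\C)$, Proposition~\ref{tame-implies-well-placed} lets me replace each up to equivalence by a well-placed sequence, say $A(k)$ and $B(k)$. Proposition~\ref{col-equal} then yields well-placed sequences $C(k)\sim A(k)$, $D(k)\sim B(k)$ with $\tau(C(k))=\tau(D(k))$ for all $k$ and with $\Lambda:=\{\tau(C(k)):k\in\N\}$ discrete in $\C^n$. Since equivalence is transitive, it suffices to produce a single automorphism $\phi\in\Auto(S)$ such that $\phi(C(k))=D(k)$ for all $k$.

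Let $H=\{M\in SL_n(\C):Me_1=e_1\}$ be the isotropy group of $e_1=(1,0,\ldots,0)$ under left multiplication; this is the connected complex Lie group $SL_{n-1}(\C)\ltimes\C^{n-1}$. Right multiplication by $H$ preserves first columns, so $\tau(Ah)=\tau(A)$ for all $A\in S$ and $h\in H$. Setting $h(k)=C(k)^{-1}D(k)$, the identity $h(k)\,e_1=C(k)^{-1}\tau(D(k))=C(k)^{-1}\tau(C(k))=e_1$ places each $h(k)$ in $H$. The well-placed condition on $C$ forces the coordinate ratios of the first column of $C(k)$ to be strictly monotone in $k$, so in particular $\tau$ is injective on the sequence $C(\,\cdot\,)$, and the assignment $\tau(C(k))\mapsto h(k)$ defines an unambiguous map from $\Lambda$ to $H$.

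Because $\C^n$ is Stein, $\Lambda\subset\C^n$ is discrete, and $H$ is a connected complex Lie group, Corollary~\ref{prescribe-on-discrete} supplies a holomorphic map $F:\C^n\to H$ with $F(\tau(C(k)))=h(k)$ for every $k$. Restricting $F$ to $\C^n\setminus\{0\}$ and defining $\phi(A)=A\cdot F(\tau(A))$ produces a holomorphic self-map of $S$ whose two-sided inverse is the analogous map built from $v\mapsto F(v)^{-1}$, since $\tau(Ah)=\tau(A)$ for $h\in H$. Hence $\phi\in\Auto(S)$ and by construction $\phi(C(k))=C(k)\,h(k)=D(k)$. The only step beyond routine bookkeeping is confirming the injectivity of $\tau$ on $\{C(k)\}$ that makes $F$ well defined on $\Lambda$; this is the main conceptual hinge, but it is immediate from the strict monotonicity built into the ``well-placed'' definition. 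Everything else is a direct instance of the shear construction already exploited in Proposition~\ref{pi-tame-is-tame}.
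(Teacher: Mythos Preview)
Your proof is correct and follows essentially the same route as the paper: reduce to well-placed sequences via Proposition~\ref{tame-implies-well-placed}, match first columns via Proposition~\ref{col-equal}, and then use a shear along the principal bundle $\tau:S\to S/H\simeq\C^n\setminus\{0\}$ (the paper writes $Q$ for your $H$), interpolating the required fiber translations by a holomorphic map $\C^n\to H$ obtained from Corollary~\ref{prescribe-on-discrete}. Your explicit verification that $k\mapsto\tau(C(k))$ is injective---needed so that the prescription $\tau(C(k))\mapsto h(k)$ is well defined---is a detail the paper leaves implicit.
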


\begin{proof}
Let $A(k)$, $B(k)$ be two tame sequences.
Each tame sequence is equivalent to a well-placed sequence
(proposition \ref{tame-implies-well-placed}).
Thus we may assume that both $A(k)$ and $B(k)$ are well-placed.
Due to proposition~\ref{col-equal} we may furthermore assume
that for every $k$ the first columns of the matrices $A(k)$ and
$B(k)$ coincide. The map projecting each matrix $A$ to its
first column can be described as $\tau: S\to S/Q\simeq\C^n\setminus\{(0,
\ldots,0)\}$. By proposition~\ref{col-equal} we know that
$\tau(A(k))=\tau(B(k))$ constitutes a sequence in $S/Q$ which is
discrete in $\C^n$. For each $k\in\N$ let $g(k)\in Q$ denote the element
such that $A(k)=B(k)\cdot g(k)$.
Now let $F:\C^n\to Q$ be a holomorphic map such
that $F(\tau(A(k)))=g(k)$ for all $k\in\N$.
Then $F$ defines an automorphism $\phi$ of the manifold $S$ given
by $x\mapsto x\cdot F(\tau(x))$. Via this automorphism $\phi$ the
two sequences $A(k)$ and $B(k)$ are equivalent.
\end{proof}

\begin{corollary}
Let $D$ be a tame discrete subset of $S=SL_n(\C)$.
Then every permutation of $D$ extends to an automorphism
of the complex manifold $S$.
\end{corollary}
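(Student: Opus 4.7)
The plan is to invoke proposition~\ref{sln-equiv} directly; essentially no further argument is required. First I would enumerate the (necessarily infinite) tame discrete set as $D=\{a_k:k\in\N\}$. Given an arbitrary permutation $\sigma:D\to D$, I would then consider the two sequences $A(k)=a_k$ and $B(k)=\sigma(a_k)$.

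The point is that, since $\sigma$ is a bijection of $D$ onto itself, both sequences $A$ and $B$ have range equal to $D$, and hence both are tame sequences in $S=SL_n(\C)$ in the sense of the earlier definition (a sequence being tame meaning its underlying set is a tame discrete subset). Proposition~\ref{sln-equiv} then furnishes a holomorphic automorphism $\phi$ of $S$ with $\phi(A(k))=B(k)$ for every $k\in\N$, i.e.\ $\phi(a_k)=\sigma(a_k)$. Since every point of $D$ is some $a_k$, this gives $\phi|_D=\sigma$, which is the desired extension.

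No real obstacle remains, because the definition of equivalence of sequences is precisely tailored to produce the needed automorphism: the corollary is just a repackaging of proposition~\ref{sln-equiv} phrased in the language of extensions rather than of equivalences of enumerations. All the substantive content, in particular the reduction to well-placed sequences, the projection onto the first column, and the use of Stein-ness of $S/Q$, has already been absorbed into the proof of that proposition.
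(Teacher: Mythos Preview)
Your proposal is correct and is exactly the intended argument: the paper states the corollary without proof immediately after proposition~\ref{sln-equiv}, treating it as an immediate consequence, and your unpacking via the two enumerations $A(k)=a_k$ and $B(k)=\sigma(a_k)$ is precisely the one-line derivation the paper leaves implicit.
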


\begin{proposition}
Let $D$ be  a discrete subset of $S=SL_n(\C)$.
Let $\pi:S\to \C^n$ denote the linear projection of 
matrices in $S\subset Mat(n\times n,\C)$ onto its first column.

Then $D$ is tame, if $\pi|_D$ has finite fibers and its image is
a discrete subset of $\C^n$.
\end{proposition}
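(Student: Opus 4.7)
The plan is to view $\pi$ as the projection of a holomorphic principal bundle and to replay the argument of proposition~\ref{pi-tame-is-tame} with a small twist that handles the fact that the base is not Stein. Let $Q = \{g \in SL_n(\C) : g e_1 = e_1\}$ be the stabilizer of the first standard basis vector $e_1$; this is the same group already denoted $Q$ in proposition~\ref{sln-equiv}. Then $\pi$ identifies with the quotient map $S \to S/Q$ for the right $Q$-action, so it is a holomorphic principal $Q$-bundle onto $\C^n \setminus \{0\}$. For $n \geq 2$ (the only non-vacuous case) the group $Q \simeq \C^{n-1} \rtimes SL_{n-1}(\C)$ is connected and non-compact.

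The obstacle to invoking proposition~\ref{pi-tame-is-tame} verbatim is that the base $\C^n \setminus \{0\}$ is not Stein for $n \geq 2$. The hypothesis of the proposition, however, is that $\pi(D)$ is discrete in the \emph{ambient} Stein manifold $\C^n$, which is exactly what is needed for the interpolation step in the original proof.

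Concretely, I would fix an arbitrary exhaustion function $\rho$ on $S$ and a weight $\zeta : D \to \R^+$. For each $q \in \pi(D)$, the fiber $\pi^{-1}(q)$ is a single right $Q$-orbit and hence non-compact, while it meets $D$ in the finite set $F_q = D \cap \pi^{-1}(q)$; since $\rho$ restricts to a proper function on this closed fiber, one can pick $h_q \in Q$ with $\rho(p \cdot h_q) \geq \zeta(p)$ for all $p \in F_q$. By corollary~\ref{prescribe-on-discrete} applied to the Stein manifold $\C^n$ and the discrete subset $\pi(D) \subset \C^n$, there exists a holomorphic map $F : \C^n \to Q$ with $F(q) = h_q$ for every $q \in \pi(D)$. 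Its restriction to $\C^n \setminus \{0\}$, together with the formula $\phi(x) = x \cdot F(\pi(x))$, defines a fiber-preserving holomorphic automorphism of $S$ that by construction satisfies $\rho(\phi(p)) \geq \zeta(p)$ for every $p \in D$; this proves tameness.

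The only real subtlety is the workaround just described: interpolation is performed on $\C^n$ rather than on the non-Stein $\C^n \setminus \{0\}$. This is exactly what discreteness of $\pi(D)$ in $\C^n$ permits, and I do not expect any other genuine obstacle.
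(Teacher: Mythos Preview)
Your argument is correct and is essentially the paper's intended approach: the paper gives no separate proof for this proposition, treating it as an instance of proposition~\ref{pi-tame-is-tame} (as the adjacent proof of corollary~\ref{union-tame} confirms by invoking that proposition directly). The one subtlety you flag---that the base $S/Q\simeq\C^n\setminus\{0\}$ is not Stein for $n\ge 2$---is glossed over in the paper, and your fix of interpolating the map $F$ on the ambient Stein space $\C^n$ (legitimate precisely because the hypothesis asks $\pi(D)$ to be discrete in $\C^n$, not merely in $\C^n\setminus\{0\}$) is exactly what makes the argument go through.
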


\begin{corollary}\label{union-tame}
Every discrete subset $D$ of $S=SL_n(\C)$ can be realized as
the union of $n$ tame discrete subsets.
\end{corollary}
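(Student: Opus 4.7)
The plan is to partition $D$ according to which column of a matrix carries the largest Euclidean norm and then apply the preceding proposition to each piece. Writing $A_{*i}$ for the $i$-th column of $A\in SL_n(\C)$, for each $A\in D$ set
\[
c(A)=\min\{i:\|A_{*i}\|=\max_j\|A_{*j}\|\}
\]
and $D_i=\{A\in D:c(A)=i\}$, so that $D=D_1\sqcup\cdots\sqcup D_n$. It then suffices to show that each $D_i$ is tame.

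First I would reduce to the case $i=1$. Either one observes that the proof of the preceding proposition is symmetric in the column index and applies verbatim to projection onto any fixed column, or one transports the situation by a column-permutation automorphism of $SL_n(\C)$ (taking care to choose one of determinant $+1$, e.g.\ by composing a transposition with a sign change on another entry) to reduce to the first-column case. It then remains, by the hypotheses of the preceding proposition, to check that the first-column projection $\pi:S\to\C^n$ restricts on $D_1$ to a map with finite fibers and discrete image.

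Both statements should follow from the single inequality
\[
\|A\|_F^2=\sum_{j=1}^n\|A_{*j}\|^2\le n\,\|A_{*1}\|^2\qquad(A\in D_1),
\]
which bounds the Frobenius norm of $A$ in terms of $\|\pi(A)\|$. For the fibers: every $A\in D_1$ with $\pi(A)=v$ satisfies $\|A\|_F\le\sqrt n\,\|v\|$ and so lies in a bounded, hence relatively compact, subset of $SL_n(\C)$; discreteness of $D$ then forces the fiber to be finite. For the image: any sequence in $D_1$ whose $\pi$-images converge in $\C^n$ has bounded Frobenius norm, is therefore contained in a compact subset of $SL_n(\C)$, and so is eventually constant, precluding accumulation points of $\pi(D_1)$.

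I expect no real obstacle: once the partition by largest column is set up, everything reduces to the column-norm inequality above plus the preceding proposition. The only minor technicality is the passage from the first-column formulation of that proposition to the $i$-th column, and this is routine.
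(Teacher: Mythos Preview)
Your proof is correct and follows essentially the same route as the paper: partition $D$ according to which column has maximal norm, use the resulting inequality $\|\pi_k(A)\|\ge c\,\|A\|$ to force finite fibers and discrete image for the column projection, and conclude tameness of each piece. The only cosmetic differences are that the paper does not bother with a tie-breaking rule (its $D_k$ merely cover $D$) and invokes Proposition~\ref{pi-tame-is-tame} directly for each $\pi_k$ rather than reducing to the first column.
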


\begin{proof}\label{column-tame}
Let $\pi_k$ denote the projection onto the $k$-th column and
define
\[
D_k=\{x\in D: ||\pi_k(x)||\ge ||\pi_j(x)||\forall j \}
\]
By construction we have
\[
||\pi_k(x)||\ge \frac{1}{n}||x||\ \ \forall x\in D_k\ \forall k
\]
Since $D$ is discrete, $\{x\in D:||x||<R\}$ is finite for all $R>0$.
Hence $\{x\in D_k:||\pi_k(x)||<\frac 1nR\}$ is finite for all $R>0$. 
It follows
that
$\pi_k(D_k)$ is discrete and that $\pi_k|_{D_k}$ has finite fibers.
Hence each $D_k$ is tame due to proposition~\ref{pi-tame-is-tame}.
\end{proof}

\begin{corollary}\label{torus-tame}
Let $T$ be a torus (i.e.~commutative reductive complex Lie subgroup)
of $SL_n(\C)$  and let $D$ be a discrete subset of $T$.

Then $D$ is a tame discrete subset in $X=SL_n(\C)$.
\end{corollary}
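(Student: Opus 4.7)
The plan is to reduce to the case where $T$ is contained in the diagonal maximal torus $T_{\max}$, and then exhibit an inner automorphism of $SL_n(\C)$ after which the first-column projection from the preceding proposition becomes proper on $D$.

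First I would observe that any commutative reductive complex Lie subgroup of $SL_n(\C)$ consists of commuting semisimple elements and is therefore simultaneously diagonalizable, so there exists $h\in SL_n(\C)$ with $hTh^{-1}\subset T_{\max}$. Conjugation by $h$ is a biholomorphic automorphism of $SL_n(\C)$ and preserves tameness, so without loss of generality $T\subset T_{\max}$. Next I would pick $g\in SL_n(\C)$ such that every coordinate $v_i$ of $v:=g^{-1}e_1$ is nonzero (a generic condition on $g$, so such $g$ exists) and denote by $g_i:=ge_i$ the columns of $g$, which form a basis of $\C^n$. For the inner automorphism $\phi_g(A):=gAg^{-1}$ and a diagonal matrix $A=\mathrm{diag}(\lambda_1,\ldots,\lambda_n)\in T_{\max}$, the first column of $\phi_g(A)$ equals
\[
\pi(\phi_g(A))=gAg^{-1}e_1=gAv=\sum_{i=1}^n\lambda_iv_ig_i.
\]
Because the $g_i$ form a basis and all $v_i\ne0$, the map $(\lambda_1,\ldots,\lambda_n)\mapsto\sum\lambda_iv_ig_i$ is a linear isomorphism $\C^n\to\C^n$; in particular $\pi\circ\phi_g$ is injective on $T_{\max}$, so its restriction to $D$ has singleton fibers.

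It remains to verify that $(\pi\circ\phi_g)(D)$ is discrete in $\C^n$. Suppose a sequence $A_m=\mathrm{diag}(\lambda^{(m)})\in D$ had images converging in $\C^n$. Applying the inverse linear isomorphism, the tuples $\lambda^{(m)}\in\C^n$ would converge to some $\lambda^*\in\C^n$, and passing the relation $\prod_i\lambda_i^{(m)}=1$ to the limit gives $\prod_i\lambda_i^*=1$, forcing each $\lambda_i^*\ne0$ and hence $\lambda^*\in T_{\max}$. Then $A_m$ converges in $T_{\max}$, and since $T$ is closed in $T_{\max}$ (being an algebraic reductive subgroup), the limit lies in $T$, contradicting discreteness of $D$ in $T$. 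The preceding proposition now yields that $\phi_g(D)$, and hence $D$, is tame. The most delicate point is this last discreteness check, where the constraint $\prod\lambda_i=1$ coming from $\det=1$ is what prevents accumulation at the boundary of $T_{\max}$ in $\C^n$.
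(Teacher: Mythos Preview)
Your argument is correct and follows the same overall strategy as the paper: reduce to the diagonal case, apply an automorphism of $SL_n(\C)$ after which the first-column projection $\pi$ is proper on $D$, and invoke the preceding proposition. The difference is only in the choice of automorphism. The paper right-multiplies by the explicit lower-triangular matrix $A$ of ones, so that for a diagonal $M=\mathrm{diag}(\lambda_1,\ldots,\lambda_n)$ one has $\pi(M\cdot A)=(\lambda_1,\ldots,\lambda_n)$ directly; thus $\psi:M\mapsto M\cdot A$ maps the maximal torus biholomorphically onto the closed hypersurface $Z=\{v:\prod v_i=1\}\subset\C^n$, and discreteness is immediate without a separate limit argument or a genericity choice. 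Your conjugation by a generic $g$ achieves the same end via a linear isomorphism, at the cost of choosing $g$ and then running the $\prod\lambda_i=1$ limit argument to rule out accumulation at the boundary of $T_{\max}$ in $\C^n$. Both work; the paper's right translation is simply the more economical realization of the same idea.
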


\begin{proof}
Every torus is contained in a maximal torus and all the maximal tori
are conjugate. Hence we may assume that $T$ is the group
of diagonal $n\times n$-matrices with determinant $1$.

Let $A$ denote the matrix given as
\[
A=\begin{pmatrix} 1 &  &  \\ \vdots & \ddots &  \\ 1 & \cdots & 1 
\end{pmatrix}
\]
Then
for each diagonal matrix $M$ with coefficients $\lambda_1,\ldots,\lambda_n)$
we have
\[
\pi( M\cdot A)=(\lambda_1,\lambda_2,\ldots,\lambda_n)
\in Z=\{v\in\C^n: \Pi_iv_i=1\}.
\]
Thus $\psi:M\to M\cdot A$ defines a biholomorphic self-map
of $X$ which maps $T$ biholomorphically onto the closed complex
analytic subset $Z$ of $\C^n$.
Hence $\psi(D)$ (and therefore also $D$) is tame
 due to the proposition.
\end{proof}

\section{Special results for $SL_2(\C)$}
\subsection{$SL_2(\Z[i])$ is tame}

We will see that $SL_2(\Z)$, $SL_2(\Z[i])$ and more generally
$SL_2(\O_K)$ for every imaginary quadratic number field $K$ are
tame discrete subsets in $SL_2(\C)$.

We will need some kind of ``overshears'' on $SL_2(\C)$. 

\begin{lemma}\label{sl-overshear}
For every holomorphic function $\lambda:\C^2\to\C^*$ with
$\lambda(0,w)=1$ for every $w\in\C$ there is a biholomorphic
automorphism $\phi_\lambda$ of the complex  manifold $SL_2(\C)$
such that
\[
\phi_\lambda: \begin{pmatrix} a & c \\ b & d \\
\end{pmatrix}
\mapsto 
\begin{pmatrix} a & c \lambda(a,b)\\ b & d' \\
\end{pmatrix}
\]
with 
\[
d'=\frac{1+bc\lambda(a,b)} {a}
= d + \frac{1-\lambda(a,b)}{a}
\]
if $a\ne 0$.

If we choose  a global coordinate for a fiber $F_v=\pi_1^{-1}(v)\simeq\C$
the restriction of $\phi_\lambda$ to $F_v$ assumes the form
$t\mapsto \lambda(v)t + c$ (with $c$ depending on the choice of the
global coordinate).
\end{lemma}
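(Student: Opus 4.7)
The plan is to write $\phi_\lambda$ explicitly in global coordinates on $SL_2(\C)$, verify that it is a holomorphic self-map, and exhibit its inverse as $\phi_{\lambda^{-1}}$; the main technical issue is to remove the apparent singularity at $\{a=0\}$.

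By the hypothesis $\lambda(0,w)=1$, the holomorphic function $1-\lambda(a,b)$ on $\C^2$ vanishes identically on the smooth hypersurface $\{a=0\}$, so by the standard division lemma there is a unique $\mu\in\O(\C^2)$ with $1-\lambda(a,b)=a\,\mu(a,b)$. Substituting the determinant condition $ad=1+bc$ into the proposed value $d'=(1+bc\lambda(a,b))/a$ rewrites it as $d' = d - bc\,\mu(a,b)$, which is manifestly holomorphic on all of $SL_2(\C)$, not merely on $\{a\ne 0\}$. I therefore define $\phi_\lambda$ globally by
\[
\begin{pmatrix} a & c \\ b & d \end{pmatrix} \mapsto \begin{pmatrix} a & c\lambda(a,b) \\ b & d - bc\,\mu(a,b) \end{pmatrix}
\]
and check directly that $a\,d' - b\,c\,\lambda(a,b) = ad - bc = 1$, so the image lies in $SL_2(\C)$.

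Next I exhibit the inverse. Since $\lambda$ is $\C^*$-valued and $\lambda(0,w)=1$, the reciprocal $\lambda^{-1}$ satisfies the same hypotheses, and so $\phi_{\lambda^{-1}}$ is defined by the same construction. On the Zariski open set $\{a\ne 0\}$ a direct computation shows that $\phi_{\lambda^{-1}}\circ\phi_\lambda$ fixes the first column, multiplies the upper-right entry by $\lambda\cdot\lambda^{-1}=1$, and therefore fixes the lower-right entry by the determinant condition. By continuity this composition is the identity on all of $SL_2(\C)$, so $\phi_\lambda$ is a biholomorphic automorphism.

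Finally, the map $\pi_1:SL_2(\C)\to\C^2\setminus\{(0,0)\}$ sending a matrix to its first column is a principal $\C$-bundle. Over the open set $\{a\ne 0\}$ the fiber $F_v$ is parametrized by $c\in\C$ (with $d=(1+bc)/a$) and $\phi_\lambda$ restricts to the $\C^*$-scaling $c\mapsto\lambda(v)c$. Over $v=(0,b)$ the entry $c=-1/b$ is forced, $d$ is the free fiber coordinate, and the formula yields the translation $d\mapsto d+\mu(0,b)$, consistent with $\lambda(v)=1$. In either case the induced map has the form $t\mapsto\lambda(v)t+c$ after identifying the fiber with $\C$, with the additive constant depending on the origin of that identification. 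The only real obstacle throughout is the extension across $\{a=0\}$, which is exactly what the hypothesis $\lambda(0,\cdot)\equiv 1$ provides.
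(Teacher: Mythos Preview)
Your proof is correct and follows the same idea as the paper's: the key point in both is that the apparent singularity of $(1-\lambda(a,b))/a$ along $\{a=0\}$ is removable precisely because of the hypothesis $\lambda(0,w)\equiv 1$. You simply make this explicit via the division $1-\lambda=a\mu$ and then spell out the inverse $\phi_{\lambda^{-1}}$ and the fiberwise description, whereas the paper leaves these as ``explicit calculation.''
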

\begin{proof}
Explicit calculation combined with the observation that
\[
\frac{1-\lambda(a,b)}{a}
\]
has a removable singularity along $a=0$ because we required
$\lambda$ to fulfill the identity $\lambda(0,w)=1\ \forall w\in\C$.
\end{proof}

\begin{proposition}
Let $\pi_i:SL_2(\C)\to\C^2$ denote the projection onto the $i$-th
column.
Let $\Gamma$ be a discrete subset of $SL_2(\C)$ such that
$\pi_1(\gamma)$ is a discrete subset of $\C^2$.

Then $\Gamma$ is tame.
\end{proposition}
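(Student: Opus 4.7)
The plan is to construct an automorphism $\phi$ of $SL_2(\C)$ by composing a shear and the overshear $\phi_\lambda$ of the preceding lemma, both preserving $\pi_1$, so as to push every element of $\Gamma$ beyond the given bound. A preliminary left multiplication by a constant element of $SL_2(\C)$ reduces us to the case where no $v=(a,b)\in\pi_1(\Gamma)$ satisfies $a=0$; enumerate $\pi_1(\Gamma)=\{v_i\}$ and, for each~$i$, $\Gamma\cap\pi_1^{-1}(v_i)=\{g_{i,j}\}_{j\in J_i}$ with top-right entry $c_{i,j}$.

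Taking $\phi=\phi_\lambda\circ\psi_s$, where $\psi_s:g\mapsto g\begin{pmatrix}1&s(\pi_1(g))\\0&1\end{pmatrix}$ is a shear based on $\pi_1$, the composition acts on each fiber $\pi_1^{-1}(v_i)$ (parametrized by $c$) as the affine map $c\mapsto\lambda(v_i)c+T(v_i)$ with $T(v_i)=\lambda(v_i)a_is(v_i)$ arbitrary. Since $\pi_1(\Gamma)$ is a discrete subset of the Stein manifold $\C^2$, corollary~\ref{prescribe-on-discrete} lets us prescribe $\lambda(v_i)\in\C^*$ and $s(v_i)\in\C$ freely and extend them to holomorphic functions on $\C^2$, with $\lambda$ normalized by $\lambda(0,w)=1$ (compatible with $a_i\ne 0$). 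For each~$i$ we choose these parameters so that $|\lambda(v_i)c_{i,j}+T(v_i)|\ge\zeta(g_{i,j})$ for all $j\in J_i$: when $J_i$ is finite this is elementary; when $J_i$ is infinite, discreteness of $\Gamma\cap\pi_1^{-1}(v_i)$ in $\pi_1^{-1}(v_i)\simeq\C$ forces $|c_{i,j}|\to\infty$, so the outer tail of the fiber is handled automatically and only the finitely many inner points require a sufficiently large $T(v_i)$.

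One then verifies that $\rho(\phi(g_{i,j}))\ge\zeta(g_{i,j})$ with respect to the exhaustion $\rho(g)=\max(|a|,|b|,|c|,|d|)$, which is sufficient to establish tameness in view of the earlier proposition reducing the tameness condition to a single exhaustion function.

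The main obstacle is the infinite-fiber case when $\zeta$ grows along the fiber faster than the fiber coordinate $c_{i,j}$: a single affine map on each fiber cannot dominate $\zeta$ for all $j$ simultaneously. To overcome this one must either iterate the construction, approximating the desired automorphism by successive compositions of shears and overshears in an Andersen--Lempert fashion, or supplement the $\pi_1$-preserving automorphisms with non-$\pi_1$-preserving ones (for instance the symmetric counterpart of the overshear lemma acting on the second column) so as to spread infinite $\pi_1$-fibers of $\Gamma$ into finite ones, thereby reducing to the situation already handled in the preceding proposition. In either case the discreteness of $\pi_1(\Gamma)$ in the Stein manifold $\C^2$ is what makes the interpolation of the required holomorphic parameters possible.
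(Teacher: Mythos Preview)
Your first two paragraphs set up a reasonable direct attack, but as you yourself recognize, it breaks down when a $\pi_1$-fiber of $\Gamma$ is infinite and $\zeta$ outpaces the fiber coordinate: a single affine map $c\mapsto\lambda c+T$ on that fiber cannot dominate an arbitrary $\zeta$. Your two suggested remedies in the final paragraph are only gestures. The Andersen--Lempert iteration would require controlling an infinite composition on each of infinitely many fibers simultaneously, and you give no indication how to make that converge to an automorphism. The alternative of ``spreading infinite $\pi_1$-fibers into finite ones'' by non-$\pi_1$-preserving maps is closer to the truth but still misses the concrete mechanism.

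The paper's proof avoids the whole problem of dominating $\zeta$ fiberwise. Instead of working against a prescribed $\zeta$, it uses the overshear--shear pair for a different purpose: to make the restriction of the \emph{other} column projection $\pi_2$ proper on $\Gamma$, after which tameness follows from the earlier principal-bundle proposition. The key observation you are missing is this: for each $v_k\in\pi_1(\Gamma)$, discreteness of $\Gamma$ guarantees a disk of some radius $\rho_k>0$ in the fiber $\pi_1^{-1}(v_k)\simeq\C$ that is entirely free of $\Gamma$. The overshear $\phi_\lambda$ rescales the $k$-th fiber by $\lambda(v_k)$, so one may arrange $\rho_k\|v_k\|>k$. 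Since $\|\pi_2(A)-\pi_2(B)\|=\|v_k\|\cdot d(A,B)$ for $A,B$ in the fiber over $v_k$, the $\pi_2$-image of that fiber is an affine line in $\C^2$ whose intersection with the ball $\{\|w\|<k\}$ has fiber-coordinate radius at most $k/\|v_k\|<\rho_k$; a single shear $R_{t_k}$ then slides the $\Gamma$-free disk over that intersection, forcing $\pi_2(\Gamma\cap\pi_1^{-1}(v_k))\cap\{\|w\|<k\}=\emptyset$. A generic choice of the $t_k$ also makes $\pi_2|_\Gamma$ injective. Now $\pi_2|_\Gamma$ has discrete image and finite fibers, and one is done. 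The point is that the overshear is used to enlarge a \emph{gap} in each fiber, not to chase an arbitrary growth rate $\zeta$.
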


\begin{proof}
Define $H=\{(z_1,z_2)\in\C^2: z_1z_2=0\}$. Observe that $\pi_1(AB)=
A\cdot\pi_1(B)$ for $A,B\in SL_2(\C)$. For each $\gamma\in\Gamma$ the
set $U_\gamma=\{A:A\pi_1(\gamma)\not\in H\}$ is Zariski open, because
$\pi_1(\Gamma)\subset\C^2\setminus\{(0,0)\}$. Since $\Gamma$ is
countable, $\cap_{\gamma\in\Gamma}U_\gamma$ is not empty.
Thus by replacing $\Gamma$ with $\{A\gamma:\gamma\in\Gamma$ for a suitably
chosen $A$ we may assume that $\Lambda=\pi_1(\Gamma)\cap H$ is empty.
We fix a bijection $\alpha:\N\to\Lambda$. 

The fibers of $\pi_1$ are the orbits of the principal right action
given as
\[
R_t:\begin{pmatrix} a & c \\ b & d \\
\end{pmatrix}
\ \mapsto\ 
\begin{pmatrix} a & c \\ b & d \\
\end{pmatrix}\cdot
\begin{pmatrix} 1 & t \\ 0 & 1 \\
\end{pmatrix}
=
\begin{pmatrix} a & c+at \\ b & d +bt\\
\end{pmatrix}
\]
Thus there is a natural distance function on each $\pi_1$-fiber
given as $d(A,B)=|t|$ if $B=R_t(A)$.
Using this distance function, choose numbers $\rho_k$ such that
$\pi_1^{-1}(\alpha(k))\setminus\Gamma$ contains a $\rho_k$-ball.
Next we  choose a map $\lambda_0:H\cup\Lambda\to\C$ such that
$\lambda_0|_H\equiv 1$ and
\[
|\lambda(\alpha(k))|\rho_k||\alpha(k)|| > k \forall k\in\N
\]
Using lemma~\ref{sl-overshear} we may from now on assume that
$\rho_k||\alpha(k)||>k$. Observe that for any $v$, $A,B\in\pi_1^{-1}(v)$
we have $||\pi_2(A)-\pi_2(B)||=||v||d(A,B)$.

From this we may deduce that there exists a sequence $t_k$ such that
\[
\pi_2\left( R_{t_k}(\Gamma\cap\pi_1^{-1}(\alpha_k))\right)
\cap \{w\in\C^2: ||w||< k\} =\{\}.
\]

Moreover, chosing $t_k$ sufficiently generic, we may assume that
after applying an automorphism $\psi$ extending the $R_{t_k}$
the projection map $\pi_2:\Gamma\to\C^2$ becomes injective.

Then the assertion follows from proposition~\ref{column-tame}.
\end{proof}

\begin{corollary}\label{sl2zi}
For $K=\Q$ or an imaginary quadratic number field $K$ let
$\Gamma=SL_2(\O_K)$ where $\O_K$ denotes the ring of algebraic
integers in $K$. 

Then $\Gamma$ is tame.
\end{corollary}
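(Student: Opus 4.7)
The plan is to apply the immediately preceding proposition to $\Gamma = SL_2(\O_K)$, which reduces the task to two verifications: that $\Gamma$ is a discrete subset of $SL_2(\C)$, and that $\pi_1(\Gamma) \subset \C^2$ is discrete.

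The key point is that for $K = \Q$ or $K$ an imaginary quadratic number field, $\O_K$ is a discrete subgroup of $(\C,+)$. For $K = \Q$ we have $\O_K = \Z \subset \R \subset \C$, which is obviously discrete. For $K = \Q(\sqrt{-d})$ with $d>0$ squarefree, $\O_K$ is a free $\Z$-module of rank $2$ whose image under the unique embedding $K \hookrightarrow \C$ is a lattice in $\C$, hence discrete. (This is exactly where the restriction to imaginary quadratic fields is used: for real quadratic $K$, the ring $\O_K$ is dense in $\R$ under any single embedding, and the corollary would fail.) Consequently $\O_K^2$ is discrete in $\C^2$, and $Mat_2(\O_K)$ is discrete in $Mat_2(\C) \simeq \C^4$.

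From this I would deduce the two required facts directly. First, $SL_2(\O_K) \subset Mat_2(\O_K)$ is discrete in $Mat_2(\C)$, and since $SL_2(\C)$ is a closed submanifold of $Mat_2(\C)$, it follows that $\Gamma = SL_2(\O_K)$ is discrete in $SL_2(\C)$. Second, the first-column projection satisfies
\[
\pi_1(\Gamma) \subset \O_K^2 \subset \C^2,
\]
so $\pi_1(\Gamma)$ is a subset of a discrete set and hence itself discrete in $\C^2$. (Note that $\pi_1(\Gamma)$ avoids the origin automatically, since the first column of a matrix in $SL_2(\C)$ is never zero.)

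With both hypotheses of the preceding proposition verified, we conclude that $\Gamma = SL_2(\O_K)$ is tame in $SL_2(\C)$. There is no real obstacle here beyond identifying the appropriate discreteness statement for $\O_K$; all the substantive work (constructing the overshears and using them to move $\Gamma$ into the position required by proposition~\ref{column-tame}) has already been carried out in the proof of the preceding proposition.
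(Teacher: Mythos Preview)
Your proof is correct and follows essentially the same approach as the paper: both apply the preceding proposition by observing that $\pi_1(\Gamma)\subset\O_K\times\O_K\subset\C^2$ and that $\O_K$ is discrete in $\C$ for $K=\Q$ or $K$ imaginary quadratic. Your version is simply a more detailed elaboration of the paper's two-line argument, additionally spelling out why $\Gamma$ itself is discrete and why the imaginary quadratic hypothesis is essential.
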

\begin{proof}
If $\pi_1$ denotes the projection onto the first column, then
$\pi_1(\Gamma)\subset\O_K\times\O_K\subset\C^2$ and
$\O_K$ is discrete in $\C$.
\end{proof}

In particular, $SL_2(\Z[i])$ is tame.

\section{Miscellenea}

\begin{proposition}
Let $G=SL_n(\C)$
with a tame infinite discrete subset $D$.

Then the automorphism group $\Auto(G\setminus D)$
has uncountably many connected components
(with respect to the compact-open topology).
\end{proposition}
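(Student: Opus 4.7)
The plan is to produce uncountably many pairwise non-homotopic automorphisms of $G \setminus D$, indexed by permutations of $D$, by combining the preceding corollary with a Hartogs extension argument.

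First, since $\dim_\C G = n^2 - 1 \ge 3$ and $D$ is discrete, Hartogs' Kugelsatz (applied to the matrix coefficients of a biholomorphism and of its inverse, together with the identity theorem for the determinant relation) shows that every $\phi \in \Auto(G \setminus D)$ extends uniquely to a biholomorphism $\tilde\phi$ of $G$, which necessarily satisfies $\tilde\phi(D) = D$. This identifies $\Auto(G \setminus D)$ with the stabilizer of $D$ in $\Auto(G)$ and yields a natural group homomorphism
\[
\Sigma : \Auto(G \setminus D) \longrightarrow \mathrm{Perm}(D), \qquad \phi \longmapsto \tilde\phi|_D.
\]
By the preceding corollary every permutation of $D$ is realized as some $\tilde\phi|_D$, so $\Sigma$ is surjective; since $D$ is countably infinite, its target is uncountable.

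Next I would show that $\Sigma$ is locally constant with respect to the compact-open topology on its domain and the discrete topology on its codomain. For each $d \in D$, choose a small closed ball $\bar B_d \subset G$ with $\bar B_d \cap D = \{d\}$, so that the sphere $\partial B_d$ lies in $G \setminus D$. If $\phi_n \to \phi$ in the compact-open topology, then in particular $\phi_n \to \phi$ uniformly on $\partial B_d$; applied coefficient-wise, the maximum principle on $\bar B_d$ propagates this to uniform convergence of the Hartogs extensions $\tilde\phi_n \to \tilde\phi$ on $\bar B_d$, so $\tilde\phi_n(d) \to \tilde\phi(d)$. Since these values all lie in the discrete set $D$, they must eventually coincide, and hence $\Sigma$ is constant on every connected component of $\Auto(G \setminus D)$. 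Combined with surjectivity, this forces uncountably many connected components.

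The delicate ingredient is the propagation-of-convergence argument at the points of $D$: one must confirm that compact-open convergence on $G \setminus D$ really forces pointwise convergence of the Hartogs extensions at each $d \in D$. This rests on the boundedness of holomorphic maps near an isolated singularity in codimension $\ge 2$, together with the maximum principle applied on a small shell around $d$. Once this is verified, the surjectivity provided by the preceding corollary converts cardinality of $\mathrm{Perm}(D)$ directly into a lower bound on the number of connected components, and the rest of the argument is essentially formal.
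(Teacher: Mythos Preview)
Your proof is correct and follows the same architecture as the paper's: extend every automorphism of $G\setminus D$ across $D$ by Hartogs, obtain the surjection $\Sigma:\Auto(G\setminus D)\to\mathrm{Perm}(D)$ from the preceding corollary, and then show $\Sigma$ is locally constant. The only substantive difference is in how local constancy is established. You surround each $d\in D$ by a small ball $\bar B_d$, note that $\partial B_d$ is a compact subset of $G\setminus D$, and invoke the maximum modulus principle (applied coefficient-wise) to propagate uniform closeness on $\partial B_d$ to closeness of the extensions at $d$. The paper instead chooses a compact $K\subset G\setminus D$ and a Stein open $\Omega$ with $p\in\hat K\subset\Omega$ and $\Omega\cap D=\{p\}$, and uses that $\phi(K)\subset\phi_0(\Omega)$ forces $\phi(\hat K)\subset\phi_0(\Omega)$, whence $\phi(p)\in D\cap\phi_0(\Omega)=\{q\}$. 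Your maximum-principle argument is the more elementary of the two, needing only one-variable-style estimates rather than Stein theory and holomorphic hulls; the paper's hull argument, on the other hand, makes the openness of $\{\phi:\tilde\phi(p)=q\}$ visible directly as a basic compact-open neighborhood, without passing through sequences.
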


\begin{proof}
Since $D$  is of codimension at least two in $G$, every holomorphic
function on $G\setminus D$ extends to $G$. Now $G$ is Stein and therefore may
be realized as a closed complex submanifold in some $\C^N$.
For every holomorphic automorphism $\phi$ of $G\setminus D$ both $\phi$ and
its inverse $\phi^{-1}$ extend as holomorphic maps $F$ resp.~$F_1$
with values in $\C^N$
through $D$. Since $G\setminus D$ is dense in $G$, we have $F(G)\subset G$
and $F_1(G)\subset G$. Now $F\circ F_1$ and $F_1\circ F$ equal the identity
map on $G\setminus D$ and therefore also on $G$. Hence $F:G\to G$ is
a biholomorphic self-map of $G$ extending $\phi\in\Auto(G\setminus D)$.
Thus every holomorphic automorphism of $G\setminus D$ induces a 
permutation of $D$. Conversely every permutation of $D$ extends to
a holomorphic automorphism. Let $p,q\in D$.
We choose a compact subset $K$ of $G\setminus D$ 
and a Stein open subset $\Omega\subset G$
such that 
\[
p\in \hat K\subset\Omega
\]
(where $\hat K$ denotes the holomorphic convex hull of $K$ in $G$)
and such that $\Omega\cap D=\{p\}$.
Let $\phi_0$ be an automorphism of $G\setminus D$ resp.~its extension
to $G$ with $\phi_0(p)=q$.
Now
\[
W=\{\phi\in \Auto(G\setminus D): \phi(K)\subset\phi_0(\Omega)\}
\]
is open with respect to the compact-open topology on $\Auto(G\setminus D)$.
For every $\phi\in W$ the extension to $G$ (by abuse of notation again denoted
by $\phi$) has the property $\phi(\hat K)\subset\phi_0(\Omega)$, because
$\phi_0(\Omega)$ is Stein and $ \phi(K)\subset\phi_0(\Omega)$.
Since 
\[
\phi(p)\in D\cap\phi_0(\Omega)=\phi_0(\Omega\cap D)=\{\phi_0(p)\},
\]
we obtain that $\phi(p)=q$ for all $\phi\in W$. It follows that
\[
\{\phi\in \Auto(G\setminus D):\phi(p)=q\}
\]
is open in $\Auto(G\setminus D)$.
This in turn implies that we have a continuous surjective map
from $\Auto(G\setminus D)$ to $Perm(D)$ where $Perm(D)$ is endowed
with a totally disconnected topology, namely the topology which has
\[
W_{p,q}=\{\phi:\phi(p)=q\} 
\]
as a basis of topology.
Because $Perm(D)$ is uncountable for an infinite countable set $D$
the assertion follows.
\end{proof}

It is easily verified that a discrete subset $D$ in $\C^n$ is tame
if and only if there is a biholomorphic self-map $\psi$ of $\C^n$
such that $\psi(D)$ is contained in a complex line.
The complex lines through the origin in $\C^n$ are precisely
the one-dimensional connected  complex Lie subgroups of the
additive group $(\C^n,+)$.
Below we present a statement in the same spirit for semisimple
Lie groups.

\begin{proposition}\label{one-para}
An infinite discrete subset of $S=SL_n(\C)$ is tame if and only
if there exists an connected one-dimensional complex
algebraic subgroup $A$ and an
automorphism of the complex manifold $S$ such that $\phi(D)\subset A$.
\end{proposition}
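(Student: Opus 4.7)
The plan is to prove the two implications separately. The backward direction will rest on the column-projection criterion and on corollary~\ref{torus-tame}, while the forward direction will follow essentially immediately from proposition~\ref{sln-equiv} (any two tame sequences in $SL_n(\C)$ are equivalent), once the backward direction has supplied one tame sequence sitting inside a one-dimensional algebraic subgroup.

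For the backward direction $(\Leftarrow)$: a connected one-dimensional complex algebraic subgroup $A$ of $SL_n(\C)$ is isomorphic either to $\C^*$ (the semisimple case) or to $\C$ (the unipotent case). In the semisimple case $A$ is itself a torus in the sense of corollary~\ref{torus-tame}, so any discrete subset of $A$ is automatically tame in $S$. In the unipotent case, $A = \exp(\C N)$ for some nonzero nilpotent matrix $N$; the plan is to conjugate $A$ by a suitable $g \in SL_n(\C)$, noting that inner conjugation is an automorphism of the complex manifold $S$, so that $g N g^{-1}$ does not annihilate the first standard basis vector $e_1 = (1,0,\ldots,0)$. This is possible whenever $N \ne 0$. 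The first-column projection $\pi : S \to \C^n$ then restricts on $g A g^{-1}$ to a nonconstant polynomial map $\C \to \C^n$, hence a proper map; in particular $\pi$ has discrete image with finite fibers on any discrete subset of $g A g^{-1}$. Proposition~\ref{column-tame} (the column-projection criterion preceding corollary~\ref{union-tame}) then yields tameness.

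For the forward direction $(\Rightarrow)$: fix a nonzero nilpotent $N$ once and for all, let $A = \exp(\C N)$, and set $B(k) = \exp(kN) \in A$. The set $\{B(k) : k \in \N\}$ is infinite and discrete in $A$, hence in $S$; by the $(\Leftarrow)$ direction just proved $B(k)$ is a tame sequence. Now let $D$ be an arbitrary tame discrete subset of $S$ and enumerate $D = \{A(k) : k \in \N\}$. Proposition~\ref{sln-equiv} supplies an automorphism $\phi$ of the complex manifold $S$ with $\phi(A(k)) = B(k)$ for all $k \in \N$, and therefore $\phi(D) \subset A$ as required.

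The main obstacle is really only the unipotent case of the backward direction, where one needs to verify that after an inner conjugation the first-column projection becomes proper on the conjugated one-parameter subgroup. Once this bookkeeping is in place, the column criterion applies and the forward direction drops out in essentially one line from proposition~\ref{sln-equiv}.
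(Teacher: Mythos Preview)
Your proof is correct and follows essentially the same route as the paper's. The only cosmetic differences are that for the unipotent case you conjugate $A$ so that the \emph{first}-column projection becomes nonconstant, whereas the paper simply selects whichever column projection $p_k$ is already nonconstant on $A$; and for the forward direction you explicitly exhibit $B(k)=\exp(kN)$ inside $A$, whereas the paper appeals abstractly to the existence of a threshold sequence (proposition~\ref{gen-proj}) together with proposition~\ref{unbounded} to produce a tame discrete subset of the unbounded set $A$, before invoking proposition~\ref{sln-equiv} exactly as you do.
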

\begin{proof}
Since $A$ is unbounded, it contains a tame discrete subset $D'$
(proposition~\ref{gen-proj} and proposition~\ref{unbounded}).
This proves one direction, because any two tame discrete subsets
of $S$ are equivalent (proposition~\ref{sln-equiv}).

For the opposite direction, let $D\subset A$ be a discrete subset.

If $A$ is reductive, i.e., if $A\simeq\C^*$, then $D$ is tame
due to corollary~\ref{torus-tame}.

It remains to discuss the case where $A\simeq\C$.
Let $p_k:S\to\C^n$ denote the map which associates to each matrix
its $k$-th column. Choose a $k$ such that $p_k$ is not constant on $A$.
Then $p_k:A\simeq\C\to\C^n$ is a non-constant algebraic morphism.
Note that every algebraic morphism from $\C$ to $\C^n$ is given by
polynomials and therefore is a proper map. 
Hence $D$ is tame due to proposition~\ref{column-tame}.
\end{proof}

\section{Some open questions}

\begin{itemize}
\item
Is every discrete subgroup in a Stein complex Lie group tame?
\item
To which extent do our results for $SL_n(\C)$ extend to arbitrary Stein
complex Lie groups, and more generally, to arbitrary homogeneous manifolds,
perhaps presuming ``flexibility'' or ``density property'' ?
\item
Does every non-compact complex manifold admit a non-tame infinite discrete 
subset?
\end{itemize}

Instead of considering the full automorphism group one may also discuss
certain subgroups, e.g., the subgroup preserving some volume form.

\end{document}